\newtheorem{assumption}{Assumption}
\newcommand{\Hspace}{{\mathbb{R}^p}}
\newcommand{\Id}{\mathbb{I}}
\newcommand{\R}{\mathbb{R}}
\newcommand{\set}[1]{\left\{#1\right\}}
\newcommand{\sets}[1]{\{#1\}}
\newcommand{\norm}[1]{\left\Vert#1\right\Vert}
\newcommand{\norms}[1]{\Vert#1\Vert}
\newcommand{\Eproof}{\hfill $\square$}
\newcommand{\dom}[1]{\mathrm{dom}(#1)}
\newcommand{\gra}[1]{\mathrm{gra}(#1)}
\newcommand{\iprod}[1]{\left\langle #1\right\rangle}
\newcommand{\iprods}[1]{\langle #1\rangle}
\newcommand{\Exp}[1]{\mathbb{E}\left[#1\right]}
\newcommand{\Exps}[2]{\mathbb{E}_{#1}\left[#2\right]}
\newcommand{\Expsk}[2]{\mathbb{E}_{#1}\big[#2\big]}
\newcommand{\Expsn}[2]{\mathbb{E}_{#1}[#2]}
\newcommand{\Expn}[1]{\mathbb{E}\big[#1\big]}
\newcommand{\Bc}{\mathcal{B}}
\newcommand{\Xc}{\mathcal{X}}
\newcommand{\Sc}{\mathcal{S}}
\newcommand{\Dc}{\mathcal{D}}
\newcommand{\Lc}{\mathcal{L}}
\newcommand{\Gc}{\mathcal{G}}
\newcommand{\Tc}{\mathcal{T}}
\newcommand{\Rc}{\mathcal{R}}
\newcommand{\Fc}{\mathcal{F}}
\newcommand{\BigO}[1]{\mathcal{O}\left(#1\right)}
\newcommand{\BigOs}[1]{\mathcal{O}\big(#1\big)}
\newcommand{\SmallO}[1]{o\left(#1\right)}
\newcommand{\SmallOs}[1]{o\big(#1\big)}
\newcommand{\zer}[1]{\mathrm{zer}(#1)}
\newcommand{\mbf}[1]{\mathbf{#1}}
\newcommand{\mcal}[1]{\mathcal{#1}}
\newcommand{\beforesec}{\vspace{-3.5ex}}
\newcommand{\aftersec}{\vspace{-2.25ex}}
\newcommand{\beforesubsec}{\vspace{-4ex}}
\newcommand{\aftersubsec}{\vspace{-2.5ex}}
\newcommand{\beforesubsubsec}{\vspace{-2.5ex}}
\newcommand{\aftersubsubsec}{\vspace{-2.5ex}}
\begin{document}

\title{
A Class of Accelerated Fixed-Point-Based Methods with Delayed Inexact Oracles and Its Applications
}

\titlerunning{
A Class of Accelerated Fixed-Point-Based Methods with Delayed Inexact Oracles
}        

\author{Nghia Nguyen-Trung \and Quoc Tran-Dinh}
\authorrunning{N. Nguyen-Trung \and Q. Tran-Dinh}


\institute{Nghia Nguyen-Trung \and Q. Tran-Dinh \at
		Department of Statistics and Operations Research\\
		The University of North Carolina at Chapel Hill, 318 Hanes Hall, Chapel Hill, NC 27599-3260.\\ 
		Email: \url{nghiant@unc.edu, quoctd@email.unc.edu}.
}

\date{}

\maketitle

\begin{abstract}
In this paper, we develop a novel accelerated fixed-point-based framework using delayed inexact oracles to approximate a fixed point of a nonexpansive operator (or equivalently, a root of a co-coercive operator), a central problem in scientific computing.
Our approach leverages both Nesterov's acceleration technique and the Krasnosel'ski\v{i}-Mann (KM) iteration, while accounting for delayed inexact oracles, a key mechanism in asynchronous algorithms.
We also introduce a unified approximate error condition for delayed inexact oracles, which can cover various practical scenarios. 
Under mild conditions and appropriate parameter updates, we establish both $\BigOs{1/k^2}$ non-asymptotic and $\SmallOs{1/k^2}$ asymptotic convergence rates in expectation for the squared norm of residual.
Our rate significantly improves the $\BigOs{1/k}$ rates in classical KM-type methods, including its asynchronous variants.
We also establish $\SmallOs{1/k^2}$ almost sure convergence rates and the almost sure convergence of iterates to a solution of the problem.
Within our framework, we instantiate three settings for the underlying operator: (i) a deterministic universal delayed oracle; (ii) a stochastic delayed oracle; and (iii) a finite-sum structure with asynchronous updates.
For each case, we instantiate our framework to obtain a concrete algorithmic variant for which our convergence results still apply, and whose iteration complexity depends linearly on the maximum delay.
Finally, we verify our algorithms and theoretical results through two numerical examples on both matrix game and shallow neural network training problems.
\end{abstract}

\keywords{
Accelerated fixed-point-based method \and
Nesterov's acceleration \and
delayed inexact oracle \and
asynchronous algorithm \and
nonexpansive operator \and
fixed-point problem.
}
\subclass{90C25   \and 90-08}

\vspace{1ex}
\beforesec
\section{Introduction}\label{sec:intro}
\aftersec
\noindent\textbf{$\mathrm{(a)}$~Problem statement.}
Let $F : \Hspace \to \Hspace$ be a nonexpansive  operator (i.e., $\norms{Fx - Fy} \leq \norms{x - y}$ for all $x, y \in \Hspace$).
We consider the following fixed-point problem:
\begin{equation}\label{eq:FP}
\textrm{Find $x^{\star}\in \Hspace$ such that:}~ x^{\star} = Fx^{\star}.
\tag{FP}
\end{equation}
This problem is fundamental in scientific computing and related fields such as optimization, machine learning, engineering, probability and statistics, and economics, see, e.g.,  \cite{agarwal2001fixed,Bonsall1962,Combettes2011a}.

It is well-known (e.g., \cite{Bauschke2011}) that if we define $G := \Id - F$, where $\Id$ is the identity mapping, then \eqref{eq:FP} can be reformulated equivalently into the following \textit{root-finding problem} of $G$:
\begin{equation}\label{eq:CE}
\textrm{Find $x^{\star} \in \Hspace$ such that:} \ Gx^{\star} = 0,
\tag{CE}
\end{equation}
where $G$ is $\frac{1}{2}$-co-coercive, i.e., $\iprods{Gx - Gy, x - y} \geq \frac{1}{2}\norms{Gx - Gy}^2$ for all $x, y \in \Hspace$.
Therefore, approximating a fixed-point $x^{\star}$ of \eqref{eq:FP} is equivalent to finding an approximate root of \eqref{eq:CE}.
For the sake of our presentation, we will treat the latter problem, i.e., \eqref{eq:CE}.
While we consider \eqref{eq:CE} in a finite-dimensional space $\R^p$, we believe that our results in this paper can be extended to a Hilbert space with a little effort. 

Throughout this paper, we require the following assumption on \eqref{eq:CE}.
\begin{assumption}\label{as:A1}
The equation \eqref{eq:CE} satisfies the following assumptions:
\begin{compactitem}
\item[$\mathrm{(i)}$] 
It has a solution, i.e., $\zer{\Phi} := \sets{x \in \Hspace \mid Gx^{\star} = 0} \neq\emptyset$.

\item[$\mathrm{(ii)}$]
There exist $\beta, \bar{\beta} \in [0, +\infty)$ and a nonnegative function $\Dc : \Hspace \times \Hspace \to \R_{+}$ satisfying $\Dc(x,x) = 0$ for all $x \in \Hspace$ such that
\begin{equation}\label{eq:G_cocoercivity}
\iprods{Gx - Gy, x - y}  \geq  \beta \norms{Gx - Gy}^2 + \bar{\beta} \Dc(x, y), \quad \forall x, y \in \Hspace.
\end{equation}
\end{compactitem}
\end{assumption}
While Assumption~\ref{as:A1}(i) makes sure that \eqref{eq:CE} is solvable and well-defined,  Assumption~\ref{as:A1}(ii) generalizes the $\beta$-co-coercivity of $G$ by adding the last term $\bar{\beta} \Dc(x, y)$, which will be specified later in our applications.
Specifically, in the finite-sum setting of \eqref{eq:CE} with $Gx = \frac{1}{n}\sum_{i=1}^nG_ix$, we will choose $\Dc(x, y) := \frac{1}{n}\sum_{i=1}^n\norms{G_ix - G_iy}^2$.
Note that, we allow either $\beta = 0$ or $\bar{\beta} = 0$ in \eqref{eq:G_cocoercivity}.
Under this flexibility, it is clear that if $G$ is $\beta$-co-coercive on $\Hspace$, then it automatically satisfies \eqref{eq:G_cocoercivity} with either $\bar{\beta} = 0$ or $\Dc(\cdot, \cdot) = 0$.
Since the co-coercivity of $G$ in \eqref{eq:CE} is equivalent to the nonexpansiveness of $F$ in \eqref{eq:FP}, Assumption~\ref{as:A1}(ii) is reasonable.

\vspace{0.5ex}
\noindent\textbf{$\mathrm{(b)}$~Motivation.}
In parallel and distributed  optimization methods, enforcing \textit{perfectly up-to-date oracle} evaluations such as [approximate] gradient or operator evaluations is often prohibitively expensive, and in many cases, infeasible \cite{Andrews2000,Bertsekas1989b,Schnitger2006}. 
Both exact and approximate synchronous oracle evaluations require global coordination across workers, memory systems, and data partitions, which causes significant idle time and communication overhead in heterogeneous computing environments. 
These limitations motivate the development of algorithms that tolerate delayed or asynchronous inexact oracle information rather than requiring workers to remain perfectly aligned to up-to-date oracle evaluations.
 
Hitherto, most advanced  asynchronous methods have been designed for minimization problems; see, for example, \cite{aviv2021asynchronous,Bertsekas1989b,cohen2021asynchronous,feyzmahdavian2023asynchronous,hannah2019a2bcd,Leblond2018a,leblond2017asaga,liu2015asynchronous,ManiaPanPapailiopoulosEtAl2015,Recht2011,tseng1991rate,Wei20131}. 
Such algorithms do not directly accommodate modern applications in robust and distributionally robust optimization, adversarial training, or reinforcement learning, which give rise to minimax or fixed-point formulations. 
Only a limited number of works address more general models such as \eqref{eq:CE} and their special cases, including minimax problems; see \cite{bertsekas1983distributed,chow2017cyclic,combettes2018asynchronous,davis2016smart,hannah2018unbounded,peng2016arock}. 
These approaches largely extend classical asynchronous optimization schemes to fixed-point problems or monotone inclusions.
However, these methods are often designed for a specific type of asynchronous mechanism, such as asynchronous coordinate updates \cite{bertsekas1983distributed,combettes2018asynchronous,hannah2018unbounded,peng2016arock} or incremental aggregation  \cite{davis2016smart}.  
In addition, these methods are non-accelerated, often require restrictive assumptions (e.g., quasi-strong monotonicity), achieve sub-optimal convergence rates (at most $\BigOs{1/k}$ or $\SmallOs{1/k}$ rates), or have inefficient complexity bounds (e.g., quadratic dependence on the maximum delay), see, e.g., \cite{davis2016smart,hannah2018unbounded,peng2016arock}. 
For instance, \cite{bertsekas1983distributed,combettes2018asynchronous} studied asynchronous fixed-point-type iterations for \eqref{eq:FP} or \eqref{eq:CE}, but they are non-accelerated and only asymptotic convergence was established without explicit iteration or oracle complexity bounds.
Recently, \cite{peng2016arock} proposed an asynchronous block-coordinate fixed-point iteration for \eqref{eq:CE} that attains asymptotic convergence properties under Assumption~\ref{as:A1}, or a linear convergence but requires a quasi-strong monotonicity of $G$, and its complexity could depend quadratically on the maximum delay. 
This dependence was later improved to linear in subsequent works, e.g., \cite{feyzmahdavian2023asynchronous}. 
Recently, \cite{davis2016smart} developed asynchronous aggregated schemes for solving \eqref{eq:CE} extended to the finite-sum setting and obtained similar convergence and complexity results using analogous block-coordinate updates and assumptions as in \cite{peng2016arock}. 

The aim of this paper is to develop a {\textit{unified accelerated fixed-point framework}} for \eqref{eq:CE} capable of handling delayed and inexact oracle evaluations and adaptable to a broad range of asynchronous update mechanisms without quasi-strong monotonicity. 
Our approach integrates Nesterov-type acceleration (different from previous works for \eqref{eq:CE}), Krasnosel'ski\v{i}-Mann iterations, and delayed inexact oracle information in a single scheme, while not using the ``quasi-strong monotonicity'' of $G$ as in  \cite{davis2016smart,peng2016arock}. 
As a result, our framework yields several improvements and new convergence properties compared to existing methods, even in special cases such as smooth convex  optimization and convex-concave minimax problems.

\vspace{0.5ex}
\noindent\textbf{$\mathrm{(c)}$~Our contributions.}
To this end, our contributions can be summarized as follows.
\begin{compactitem}
\item[$\mathrm{(i)}$] 
We apply Nesterov's acceleration techniques to the Krasnose{l}'ski\v{i}-Mann fixed-point iteration to derive a new \textbf{A}ccelerated \textbf{F}ixed-\textbf{P}oint-type framework (\ref{eq:iFKM4CE}) with delayed inexact oracles (deterministic or stochastic) to approximate solutions of \eqref{eq:CE}. 

\item[$\mathrm{(ii)}$] 
We introduce a generic \textit{error approximation criterion} to handle delayed inexact oracles of $G$.
Under this criterion, we can prove a $\BigOs{1/k^2}$ convergence rate of our \ref{eq:iFKM4CE} framework on the residual $\mathbb{E}[ \norms{Gy^k}^2 ]$, where $k$ is the iteration counter and $y^k$ is the last iterate.
We also establish a faster $\SmallO{1/k^2}$ convergence rate of $\norms{Gy^k}^2$ both in expectation and almost surely, and the almost sure convergence of iterates $\sets{y^k}$ to a solution of \eqref{eq:CE}.

\item[$\mathrm{(iii)}$] 
We apply our framework to derive an \ref{eq:iFKM4CE} variant with delay updates and achieve an iteration-complexity of $\BigOs{\frac{\tau}{\beta\epsilon}}$ to obtain an $\epsilon$-solution of \eqref{eq:CE}, where $\tau$ is the maximum delay and $\beta$ is the co-coercivity constant in Assumption~\ref{as:A1}.

\item[$\mathrm{(iv)}$] 
We also apply our method to design an asynchronous stochastic \ref{eq:iFKM4CE} variant for approximating a solution of \eqref{eq:CE} when $G$ is equipped with an unbiased stochastic oracle $\mbf{G}$ such that $Gx = \mathbb{E}[ \mbf{G}(x, \xi) ]$.
We prove that our method can achieve an $\epsilon$-solution after $\BigOs{\frac{\sigma}{\epsilon\sqrt{\tau}} + \frac{\tau}{\beta \epsilon}}$ oracle calls, where $\tau$ is the maximum delay and $\sigma^2$ is the variance of $\mbf{G}$.

\item[$\mathrm{(v)}$] 
As byproducts of our results, we also obtain two new variants of our \ref{eq:iFKM4CE} framework.
The first one is an incremental aggregated \ref{eq:iFKM4CE} scheme, while the second variant is a shuffling aggregated \ref{eq:iFKM4CE} algorithm.
Both variants have rigorous theoretical convergence rate guarantees as consequences of our unified convergence analysis. 
\end{compactitem}
\noindent\textbf{\textit{Algorithmic and analysis highlights.}}
First, in the absence of inexactness, our \ref{eq:iFKM4CE} scheme reduces to an exact deterministic version for solving \eqref{eq:CE}. 
Due to a different parameter update, this method is not identical to, but closely related to, Nesterov’s accelerated scheme in \cite{tran2022connection} and the fast KM method in \cite{bot2022bfast}. 
However, the convergence analyses in \cite{bot2022bfast,tran2022connection} are not well suited for developing inexact variants of \eqref{eq:CE}. 
The difficulty arises from bounding inner-product terms such as $\langle e^k, y^{k+1} - y^k\rangle$ and $\langle e^k, y^k - x^{\star}\rangle$, where $y^k$ is the iterate, $x^{\star} \in \zer G$, and $e^k = \widetilde{G}^k - Gy^k$ is the error between the inexact oracle $\widetilde{G}^k$ and the true value $Gy^k$. 
Processing these terms is non-trivial in convergence analysis.
This limitation motivates us to develop a new scheme whose analysis avoids such terms entirely.

Second, although our method may initially appear similar to Nesterov’s accelerated scheme in \cite{lan2011primal,Nesterov2005c} for smooth convex minimization, the underlying analysis differs substantially. 
In convex optimization, convergence analysis typically uses the objective residual $f(x^k) - f(x^{\star})$ as part of the underlying Lyapunov function \cite{bansal2017potential}. 
For general operator problems such as \eqref{eq:CE}, no such objective exists, and the analysis instead relies on constructing an appropriate Lyapunov function -- a nontrivial task in this context.

Third, our method incorporates acceleration together with delayed inexact oracles, setting it apart from existing inexact KM-type schemes such as \cite{bravo2019rates,cortild2025krasnoselskii,liang2016convergence}. 
It is also different from recent non-accelerated paradigms in \cite{combettes2018asynchronous,davis2016smart,peng2016arock}, where the analysis often uses a standard metric such as $\norms{x^k - x^{\star}}^2$ to construct a Lyapunov function.
They often require stronger assumptions for non-asymptotic rates, such as quasi-strong monotonicity \cite{davis2016smart,peng2016arock}.

Fourth, in the stochastic setting, our approach also differs from recent variance-reduced fast KM methods \cite{tran2024accelerated,TranDinh2025a} due to the new algorithmic structure, convergence metrics, and the generic delayed error approximation condition introduced in Definition~\ref{de:error_bound_cond}. 

Finally, our framework is sufficiently general to accommodate additional variants beyond the three settings studied in Section~\ref{sec:app_of_iFKM}.
It can also be applied to solve more general classes of problems than \eqref{eq:CE}, such as monotone or co-hypomonotone inclusions, by applying appropriate reformulation techniques as in \cite{TranDinh2025a} (see Remark \ref{re:extension}).


\vspace{0.5ex}
\noindent\textbf{$\mathrm{(d)}$~Related work and comparison.}
Since \eqref{eq:FP} or its equivalent form \eqref{eq:CE} is classical and fundamental, its theory, numerical methods, and applications have been widely developed for many decades, including the following monographs \cite{Bauschke2011,Rockafellar1997,GoebelKirk1990}.
Here, we only discuss and compare the most relevant work to our setting and methods studied in this paper.

\vspace{0.5ex}
\noindent\textit{$\mathrm{(i)}$~Fixed-point methods and its accelerated  variants.}
Both the Banach-Picard (BP) iteration and the Krasnosel'ski\v{i}-Mann (KM) scheme are classical approaches for approximating a solution of \eqref{eq:FP} or, equivalently, \eqref{eq:CE}.
The BP iteration requires the operator $F$ to be contractive, whereas the KM scheme only assumes that $F$ is nonexpansive, and therefore applies to a broader class of algorithms --- particularly those arising in monotone operator theory such as proximal-point and splitting schemes; see, for example, \cite{Bauschke2011,Facchinei2003,minty1962monotone,ryu2016primer}.
While the BP method achieves a linear convergence rate due to the contractivity of $F$, the KM scheme guarantees at most $\BigOs{1/k}$ or $\SmallOs{1/k}$  rates in terms of the squared residual norm under mere nonexpansiveness, which is far from the $\BigOs{1/k^2}$-optimal rate \cite{partkryu2022}.

Boosting the convergence rates of classical fixed-point methods has become an attractive research topic in recent years.
Both Nesterov’s acceleration and Halpern’s fixed-point iteration have been extensively studied for solving \eqref{eq:FP} and \eqref{eq:CE}; see, for example, \cite{attouch2020convergence,attouch2019convergence,Beck2009,bot2023fast,chambolle2015convergence,diakonikolas2020halpern,kim2021accelerated,lieder2021convergence,mainge2021accelerated,partkryu2022,qi2021convergence,sabach2017first,tran2022connection,tran2021halpern,tran2024revisiting2,yoon2021accelerated}.
Depending on the assumptions and parameter update rules, these methods achieve convergence rates as fast as $\BigOs{1/k^2}$ or even $\SmallOs{1/k^2}$.
These rates are substantially improved over the non-accelerated counterparts by a factor of $1/k$ and often match the optimal rate up to a constant factor in many settings \cite{lieder2021convergence,park2022exact}.

Hitherto, however, most accelerated algorithms of this type are deterministic and rely on exact oracles.
Only a limited number of works address inexact oracles, with most focusing on stochastic oracles or randomized coordinate updates; see, e.g., \cite{cai2023variance,cai2022stochastic,tran2022accelerated,tran2024accelerated,TranDinh2025a}.
As observed in convex optimization \cite{Devolder2010}, accelerated methods tend to be more sensitive to oracle errors than their non-accelerated counterparts.
In addition, most convergence analysis of inexact methods relies on inexact oracles defined through the objective function. 
Unlike convex optimization, here the absence of an objective function in \eqref{eq:FP} makes the development of Nesterov-type inexact accelerated fixed-point algorithms particularly challenging.

\vspace{0.5ex}
\noindent\textit{$\mathrm{(ii)}$~Optimization algorithms with delayed or asynchronous updates.}
Asynchronous algorithms have been extensively developed  in the literature, primarily within the domain of convex optimization and numerical linear algebra.
The seminal monograph \cite{Bertsekas1989b} arguably laid the theoretical foundation for parallel and distributed computation, covering both synchronous and asynchronous algorithmic mechanisms.
The recent surge of interest in solving large-scale problems on parallel and distributed computing platforms has revived and greatly expanded research on asynchronous methods.
A major milestone in this renewed interest is \cite{Recht2011}, introducing HOGWILD!, which established near-optimal convergence guarantees for lock-free stochastic gradient descent.
Subsequent theoretical analyses have typically demonstrated convergence degradation proportional either to the maximum delay \cite{agarwal2011distrubuted,arjevani2020Dtight,feyzmahdavian2016asynchronous,lian2015asynchronous,ManiaPanPapailiopoulosEtAl2015,stich2020error} or to the average delay \cite{aviv2021asynchronous,cohen2021asynchronous}.
The work of \cite{feyzmahdavian2023asynchronous} refined this characterization by replacing the maximum delay with a threshold parameter and established a non-accelerated 
$\BigOs{1/k}$ best-iterate convergence rate in expected optimality gap.
The ASAGA algorithm \cite{leblond2017asaga} achieved linear convergence for finite-sum, smooth, and strongly convex problems, with explicit step-size and constant dependencies on the bounded overlap (delay) parameter, matching the serial SAGA performance up to delay-dependent factors and allowing near-linear speedups.
Similarly, A2BCD \cite{hannah2019a2bcd}, a Nesterov's accelerated block-coordinate method, attained linear convergence with optimal complexity under moderate delay conditions.
Nevertheless, extending this approach to accelerated fixed-point methods remains largely open.

\vspace{0.5ex}
\noindent\textit{$\mathrm{(iii)}$~Fixed-point methods with delayed or asynchronous updates.}
Unlike optimization, asynchronous algorithms for fixed-point problems such as \eqref{eq:FP} and generalized equations such as \eqref{eq:CE} remain comparatively underdeveloped.
While most early methods were presented in \cite{Bertsekas1989b}, convergence rates and complexity analysis similar to those in optimization remain largely elusive. 
Remarkably, the work \cite{peng2016arock} systematically studied a class of asynchronous coordinate schemes for \eqref{eq:CE} based on the KM iteration, called ARock,  allowing inconsistent reads and finite maximal delays, and establishing almost sure convergence (and linear rates under an additional quasi-strong monotonicity assumption) with delay-dependent step sizes.
Subsequent variants of ARock can be found, e.g., in \cite{hannah2018unbounded,heaton2019asynchronous}.
The work \cite{davis2016smart} extended ARock's idea and developed asynchronously incremental aggregated schemes for a finite-sum setting of \eqref{eq:CE} and obtained similar convergence results as \cite{peng2016arock} under analogous assumptions.
For monotone inclusions, \cite{combettes2018asynchronous} proposed an asynchronous block-iterative primal-dual splitting method with weak and strong asymptotic convergence guarantees.
These methods, however, are non-accelerated, highlighting the need for a general, non-asymptotic, and accelerated framework capable of accommodating asynchrony in both nonexpansive mappings and monotone equations or inclusions.

\vspace{0.5ex}
\noindent\textbf{$\mathrm{(e)}$~Paper outline.}
The rest of this paper is organized as follows.
Section~\ref{sec:background} reviews the necessary background and mathematical tools used in the sequel.
Section~\ref{sec:iFKM_framework} develops our accelerated fixed-point framework with delayed inexact oracles and analyzes its convergence under a general error approximation condition.
Section~\ref{sec:app_of_iFKM} specifies our framework for three concrete computing models, and establishes their convergence rates accordingly.
Section~\ref{sec:num_experiments} presents two numerical examples to verify our methods and theoretical results.
For clarity of presentation, we defer some technical proofs to the appendix.

\beforesec
\section{Background and Mathematical Tools}\label{sec:background}
\aftersec
In this section, we recall the necessary concepts and mathematical tools used in the sequel.
We also briefly review Nesterov's accelerated gradient method for minimizing a smooth and convex function as a starting point of our development in this paper.

\beforesubsec
\subsection{\textbf{\textit{Basic concepts}}}\label{subsec:basic_concepts}
\aftersubsec
\textbf{$\mathrm{(a)}$~Basic notations.} 
We work with a finite-dimensional space $\R^p$ equipped with an inner product $\iprods{\cdot, \cdot}$, inducing the norm $\norms{\cdot}$. 
For a mapping $G$ (single-valued or multivalued mapping), we denote by $\dom{G} := \sets{ x \in \Hspace: Gx \neq \emptyset}$ the domain of $G$ and by $\gra{G} := \sets{(x, u) \in \Hspace \times\Hspace : u \in Gx}$ the graph of $G$.
For given functions $g(t)$ and $h(t)$, we say that $g(t) = \BigOs{h(t)}$ if there exists $M > 0$ and $t_0 \geq 0$ such that $g(t) \leq M h(t)$ for $t \geq t_0$, and we say that $g(t) = o(h(t))$ if $\lim_{t \to \infty} \frac{g(t)}{h(t)} = 0$.

\vspace{0.75ex}
\noindent\textbf{$\mathrm{(b)}$~Filtration and expectations.} 
We define by $\Fc_k$ the $\sigma$-algebra generated by all the randomness induced by $x^0, x^1, \dots, x^k$, up to the $k$-th iteration.
This $\sigma$-algebra generates a filtration $\sets{\Fc_k}$.
We denote by $\Expsn{k}{\cdot} = \Expn{\cdot \mid \Fc_k}$ the conditional expectation w.r.t. $\Fc_k$, by $\Expsn{\xi}{\cdot} = \Expsn{\xi\sim\mathbb{P}}{\cdot}$ the expectation w.r.t. $\xi$, and by $\Expn{\cdot}$ the total expectation.

\vspace{0.75ex}
\noindent\textbf{$\mathrm{(c)}$~Co-coercivity, Lipschitz continuity, and nonexpansiveness.} 
For a single-valued operator $G: \Hspace \to \Hspace$, we say that $G$ is $\beta$-co-coercive if there exists $\beta \geq 0$ such that $\iprods{Gx - Gy, x - y} \geq \beta\norms{Gx - Gy}^2$ for all $x, y \in \Hspace$;
we say that $G$ is $L$-Lipschitz continuous if there exists $L \geq 0$ such that $\norms{Gx - Gy} \leq L \norms{x - y}$ for all $x, y \in \dom{G}$.
If $L=1$, then $G$ is nonexpansive.
If $Gx = \nabla{f}(x)$, the gradient of a convex function $f$, then the $L$-Lipschitz continuity of $\nabla{f}$ is equivalent to the $\frac{1}{L}$-co-coercivity of $\nabla{f}$ due to the well-known Baillon--Haddad theorem \cite{Bauschke2011}.

\vspace{0.75ex}
\noindent\textbf{$\mathrm{(d)}$~Monotonicity and co-hypomonotonicity.}
For a possibly multivalued operator $G: \Hspace \rightrightarrows \Hspace$, we say that $G$ is $\rho$-co-hypomonotone if $\iprods{w_x - w_y, x - y} \geq -\rho\norms{w_x - w_y}^2$ for all $(x, w_x), (y, w_y) \in \gra{G}$ and $\rho \geq 0$.
If $\rho = 0$, then we say that $G$ is monotone. 
A co-hypomonotone operator is not necessarily monotone, see, e.g., \cite{TranDinh2025a} for concrete examples.


\beforesubsec
\subsection{\textit{\textbf{Nesterov's accelerated gradient method and its variants}}}\label{subsec:Nesterov_method}
\aftersubsec
We recall here one  common variant of Nesterov's accelerated gradient method \cite{Nesterov2004} for minimizing a convex and $L$-smooth function $f$ from \cite{lan2011primal}.
This method  starts from an arbitrary $x^0 \in \R^p$, chooses $y^0 := x^0$,  and at iteration $k \geq 0$, updates:
\begin{equation}\label{eq:NAG}
\arraycolsep=0.2em
\left\{\begin{array}{lcl}
y^k &:= & \frac{t_k - s}{t_k}x^k + \frac{s}{t_k}z^k, \vspace{1ex}\\
x^{k+1} &:= & y^k - \eta\nabla{f}(y^k), \vspace{1ex}\\
z^{k+1} &:= & z^k + t_k(x^{k+1} - y^k),
\end{array}\right.
\tag{NAG}
\end{equation}
where $s > 0$ and $\eta \in (0, \frac{1}{L}]$ are given, and $t_k =  \BigOs{k}$ can be adaptively updated.

In the unconstrained setting, \eqref{eq:NAG} can be expressed equivalently to several well-known variants of Nesterov’s accelerated gradient scheme, including those presented in \cite{Nesterov1983,Nesterov2004}.
With an appropriate choice of the parameters $s$ and $t_k$, the method attains both $\BigOs{1/k^2}$ and $\SmallOs{1/k^2}$ convergence rates for the objective residual $f(x^k) - f(x^{\star})$.
Moreover, its iterates $\sets{x^k}$ also converge to a minimizer $x^{\star}$ of $f$; see, for instance, \cite{attouch2016rate}.
Over the past two decades, research on this topic has evolved extensively, with significant contributions summarized in \cite{beck2017first,lan2020first,Nesterov2004}.
Furthermore, these ideas have been successfully extended to broader settings, including monotone inclusions and variational inequalities; see, e.g., \cite{attouch2020convergence,bot2023fast,kim2021accelerated,mainge2021accelerated,mainge2021fast,tran2022connection}.

\beforesec
\section{Accelerated Fixed-Point Framework with Delayed Inexact Oracle}\label{sec:iFKM_framework}
\aftersec
In this section, we develop a \textbf{novel accelerated fixed-point iteration framework} with \textbf{\textit{delayed inexact oracle}} to approximate a solution of \eqref{eq:FP}, or equivalently, a root of \eqref{eq:CE}.
Our method is inspired by Nesterov's accelerated gradient algorithm described in \eqref{eq:NAG}.

\beforesubsec
\subsection{\textbf{\textit{The accelerated fixed-point method with delayed inexact oracle}}}\label{subsec:iFKM_scheme}
\aftersubsec
Our method for solving \eqref{eq:CE} is described as follows.
\textit{
\begin{compactitem}[$\bullet$]
\item\textbf{Initialization:} 
Choose a constant $s > 1$.
Given an initial point $y^0 \in \Hspace$, we set $z^0 := y^0$.
\item \textbf{Main loop:}
At each iteration $k \geq 0$, we compute an appropriate \textbf{delayed inexact estimator} $\widetilde{G}^k$ of $Gy^k$ $($either deterministic or stochastic$)$, and then update
\begin{equation}\label{eq:iFKM4CE}
\arraycolsep=0.2em
\left\{\begin{array}{lcl}
x^{k+1} &:= & y^k - \eta_k \widetilde{G}^{k}, \vspace{1ex}\\
z^{k+1} &:= & z^k + \frac{\gamma_k}{s}(x^{k+1} - y^k), \vspace{1ex}\\
y^{k+1} & := & \frac{t_k - s}{t_k}x^{k+1} + \frac{s}{t_k}z^{k+1},
\end{array}\right.
\tag{AFP}
\end{equation}
where  $\eta_k > 0$, $\gamma_k \in [0, 1]$,  and $t_k > 1$ are given parameters, determined later.
\end{compactitem}
}
In the subsequent sections, we will specify how to construct the delayed inexact estimator $\widetilde{G}^k$ of $Gy^k$ as well as the choice of $s$ and the update rules of $t_k$, $\eta_k$, and $\gamma_k$.
First, we define
\begin{equation}\label{eq:oracle_error}
e^k := \widetilde{G}^k - Gy^k
\end{equation}
to be the error between the \textit{\textbf{delayed inexact estimator}} $\widetilde{G}^k$ and the true operator value $Gy^k$ at $y^k$.
Let us briefly discuss the relations between our scheme \eqref{eq:iFKM4CE} and other methods.
\begin{compactitem}[$\diamond$]
\item 
In the exact case, i.e., when $\widetilde{G}^k = Gy^k$, our method is much closer in spirit to Nesterov’s variant in \cite{allen2017linear,lan2011primal} than to those in \cite{Nesterov1983,Nesterov2004}.
A key distinction, however, lies in the choice of $\gamma_k$.
In convex optimization, as shown in \eqref{eq:NAG}, the parameter is taken as $\gamma_k = st_k = \BigOs{k}$ \cite{allen2017linear,lan2011primal}, whereas in our method $\gamma_k$ is constant, namely $\gamma_k = \gamma = \BigOs{1}$.
This difference leads to a more aggressive update of $z^k$ in convex optimization compared with our approach.
 
\item 
Alternatively, if $\widetilde{G}^k = Gy^k$ (i.e., the exact value $Gy^k$ is used), then our scheme reduces to a deterministic accelerated fixed-point method for solving \eqref{eq:CE}, which can be transformed into forms similar to those studied in \cite{attouch2020convergence,bot2022bfast,kim2021accelerated,mainge2021accelerated,mainge2021fast,tran2022connection}.
However, as mentioned earlier, extending the convergence analyses from these works to the inexact oracle variants is difficult compared with \eqref{eq:iFKM4CE}, because their proofs require bounding product terms such as $\iprods{e^k, y^k - y^{k-1}}$ and $\iprods{e^k, y^k - x^{\star}}$.
Controlling such terms becomes challenging unless they vanish; for example, when using unbiased estimators $\widetilde{G}^k$ satisfying $\Expsn{k}{\widetilde{G}^k} = Gy^k$.
 
\item 
For the inexact setting, we allow both deterministic and stochastic inexact oracles in our scheme \eqref{eq:iFKM4CE}.
The stochastic oracle is not required to be unbiased, which makes our method more flexible and broad enough to encompass a wide range of existing and new inexact oracles, particularly those involving delays.

\item 
In convex optimization, the underlying inexact oracles for accelerated methods are often defined directly (see, e.g., \cite{Devolder2010,dvurechensky2016stochastic}) or indirectly through the objective function (see, e.g., \cite{Schmidt2011}).
Such formulations make it possible to avoid directly bounding inner-product terms such as $\iprods{e^k, y^k - x^{\star}}$.
However, because no objective function underlies \eqref{eq:CE}, establishing convergence for inexact methods becomes substantially more difficult.
\end{compactitem}
Note that our framework \eqref{eq:iFKM4CE} is single-loop.
In terms of per-iteration complexity, \eqref{eq:iFKM4CE} requires only one evaluation of $\widetilde{G}^k$ per iteration.
All additional operations consist solely of vector additions or scalar-vector multiplications, making their computational cost comparable to that of classical fixed-point schemes such as BP iteration or KM fixed-point schemes.
 
\beforesubsec
\subsection{\textit{\textbf{Key properties of \eqref{eq:iFKM4CE}}}}\label{subsec:fast_FP_method_key_estimate}
\aftersubsec
To analyze Nesterov's accelerated methods in convex optimization, we often use a Lyapunov function of the form $\Lc_k = a_k(f(x^k) - f(x^{\star})) + \frac{1}{2}\norms{z^k - x^{\star}}^2$ for some $a_k = \BigOs{k^2}$, see, e.g., \cite{bansal2017potential}.
For our scheme \eqref{eq:iFKM4CE}, we first introduce the following function:
\begin{equation}\label{eq:iFKM4CE_Lk_func}
\arraycolsep=0.2em
\begin{array}{lcl}
\Lc_k &:= & \frac{a_k}{2}\norms{Gy^k}^2 + s t_{k-1}\iprods{Gy^k, y^k - z^k} + \frac{s^2(s-1)}{2\eta_{k-1}\gamma_{k-1}}\norms{z^k - x^{\star}}^2, 
\end{array}
\end{equation}
where $a_k > 0$ is given, determined later, and other parameters are given in \eqref{eq:iFKM4CE}.

The function $\Lc_k$ is not yet a Lyapunov function for our method.
One needs to augment it with additional terms due to the delayed inexact oracle.
Next, we state the following two technical lemmas to characterize the properties of $\Lc_k$ in  \eqref{eq:iFKM4CE_Lk_func}.
For the sake of presentation, the proof of these lemmas is deferred to Appendix~\ref{sec:apdx:iFKM4CE_framework}.

\begin{lemma}\label{le:iFKM4CE_key_estimate2}
	Suppose that Condition~\eqref{eq:G_cocoercivity} of Assumption~\ref{as:A1} holds for \eqref{eq:CE} and $\gamma \in [0, 1]$, $s > 1 + \gamma$, and $\eta > 0$ are given.
	Let $\sets{(x^k, y^k, z^k)}$ be generated by \eqref{eq:iFKM4CE} using
	\begin{equation}\label{eq:iFKM4CE_param1}
	\arraycolsep=0.2em
	\begin{array}{lcl}
	t_k = t_{k-1} + 1, \quad t_k > 2s, \quad \gamma_k := \gamma \in (0, 1], \quad \text{and} \quad \eta_k := \frac{\eta t_k}{2(t_k - s)}.
	\end{array}
	\end{equation}
	Then, for any $c_1 > 0$ and $c_2 > 0$, and $e^k$ defined by \eqref{eq:oracle_error}, $\Lc_k$ defined by \eqref{eq:iFKM4CE_Lk_func} satisfies
	\begin{equation}\label{eq:iFKM4CE_key_est2}
		\arraycolsep=0.2em
		\begin{array}{lcl}
			\Lc_k - \Lc_{k+1} & \geq & \frac{[2\beta - (1+c_1)\eta] t_k(t_k - s)}{2} \norms{Gy^{k+1} - Gy^k}^2  + \bar{\beta}t_k(t_k - s)\Dc(y^{k+1}, y^k)\vspace{1ex}\\
			&& + {~} \frac{\eta (s - \gamma - 1) t_k}{4}\norms{Gy^k}^2  + s(s-1) \iprods{ Gy^k, y^k - x^{\star} } \vspace{1ex}\\
			&& - {~} \frac{\eta}{2} \left(\frac{1}{c_1} + \frac{s-\gamma}{c_2}\right) t_k(t_k-s) \norms{e^k}^2.
		\end{array}
	\end{equation}
\end{lemma}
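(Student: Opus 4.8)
The plan is to compute the one-step difference $\Lc_k - \Lc_{k+1}$ directly from the definition \eqref{eq:iFKM4CE_Lk_func} and the update \eqref{eq:iFKM4CE}, and then reorganize it into the five terms on the right-hand side of \eqref{eq:iFKM4CE_key_est2} using co-coercivity \eqref{eq:G_cocoercivity} together with a sequence of Young inequalities calibrated by $c_1$ and $c_2$. First I would record the elementary identities implied by \eqref{eq:iFKM4CE}, writing $\widetilde{G}^k = Gy^k + e^k$: namely $x^{k+1}-y^k = -\eta_k\widetilde{G}^k$, $z^{k+1}-z^k = -\tfrac{\gamma}{s}\eta_k\widetilde{G}^k$, and, after eliminating $x^{k+1},z^{k+1}$ from the third line, the two linear relations $y^{k+1}-z^{k+1} = \tfrac{t_k-s}{t_k}\big[(y^k-z^k) - \tfrac{(s-\gamma)\eta_k}{s}\widetilde{G}^k\big]$ and $s(y^k-z^k) = -t_k(y^{k+1}-y^k) - (t_k-s+\gamma)\eta_k\widetilde{G}^k$. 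These identities are the backbone of the computation, since they let me trade the increments $y^{k+1}-y^k$ and $z^{k+1}-z^k$ against $\widetilde{G}^k$ without ever differentiating $G$.

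Next I would split $\Lc_k - \Lc_{k+1}$ into the three pieces corresponding to the three summands of \eqref{eq:iFKM4CE_Lk_func} and treat each separately. For the bilinear (middle) piece I would use $t_{k-1}=t_k-1$ and the second linear relation to rewrite $s t_{k-1}\iprods{Gy^k,y^k-z^k} - s t_k\iprods{Gy^{k+1},y^{k+1}-z^{k+1}}$ as the sum of $t_k(t_k-s)\iprods{Gy^{k+1}-Gy^k,\, y^{k+1}-y^k}$, the surviving term $s(s-1)\iprods{Gy^k,\, y^k - z^k}$, and a collection of $\widetilde{G}^k$-cross terms; the bound \eqref{eq:G_cocoercivity} applied to the first of these produces exactly $\beta t_k(t_k-s)\norms{Gy^{k+1}-Gy^k}^2 + \bar{\beta} t_k(t_k-s)\Dc(y^{k+1},y^k)$. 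For the distance piece I would expand $\norms{z^{k+1}-x^\star}^2$ around $z^k$; the weight $\tfrac{s^2(s-1)}{2\eta_{k-1}\gamma}$ is designed so that the resulting cross term carries coefficient exactly $s(s-1)$ (the factor $1/\eta_k$ in the weight cancels the $\eta_k$ in $z^{k+1}-z^k$), which then combines with the surviving middle term into $s(s-1)\iprods{Gy^k,\, y^k - x^\star}$ plus a residual $e^k$-coupling discussed below.

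After these reductions the remaining terms are quadratics and inner products in $\{Gy^k,\,Gy^{k+1},\,\widetilde{G}^k,\,e^k\}$ and a leftover coupling of $e^k$ with $z^k-x^\star$. I would choose the Lyapunov weight $a_k$ (of order $t_{k-1}^2$) precisely so that, after writing $\iprods{Gy^{k+1},Gy^k}=\tfrac12(\norms{Gy^{k+1}}^2+\norms{Gy^k}^2-\norms{Gy^{k+1}-Gy^k}^2)$, the pure $\norms{Gy^{k+1}}^2$ contributions cancel against $a_{k+1}$ and the $\norms{Gy^k}^2$ contributions collapse to $\tfrac{\eta(s-\gamma-1)t_k}{4}\norms{Gy^k}^2$ (here $s>1+\gamma$ is exactly what keeps this coefficient nonnegative). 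The error is handled purely through squared norms: two Young inequalities, with parameters $c_1$ and $c_2$, are applied to the cross terms pairing $e^k$ with $Gy^{k+1}-Gy^k$ and with $Gy^k$; the first contributes the $-\tfrac{(1+c_1)\eta}{2}t_k(t_k-s)$ correction to the $\norms{Gy^{k+1}-Gy^k}^2$ coefficient, and together they generate the stated $-\tfrac{\eta}{2}\big(\tfrac1{c_1}+\tfrac{s-\gamma}{c_2}\big)t_k(t_k-s)\norms{e^k}^2$.

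The step I expect to be the genuine obstacle is the control of the leftover coupling between $e^k$ and $z^k-x^\star$: this is exactly the $\iprods{e^k,\,\cdot-x^\star}$-type term whose presence derails the analyses of \cite{bot2022bfast,tran2022connection}. The whole point of retaining the distance term $\tfrac{s^2(s-1)}{2\eta_{k-1}\gamma}\norms{z^k-x^\star}^2$ in $\Lc_k$ with this particular $\eta_{k-1}$-weighting is that its one-step variation can be combined with this coupling by completing the square, so that the $z^k-x^\star$ dependence is eliminated and only a clean multiple of $\norms{e^k}^2$ survives. Verifying that the residual quadratic in $z^k-x^\star$ (produced by the change $\tfrac{1}{\eta_{k-1}}\to\tfrac{1}{\eta_k}$ under the rule \eqref{eq:iFKM4CE_param1}) has the sign needed for this completed square to be discardable in the lower bound, and that the resulting $\norms{e^k}^2$ contribution is consistent with the coefficient already obtained from the $c_1,c_2$ steps, is where the delicate bookkeeping of the parameters concentrates; everything else is routine collection of terms.
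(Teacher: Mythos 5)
Your plan follows the paper's route essentially step for step: the paper first proves an auxiliary estimate (Lemma~\ref{le:iFKM4CE_key_estimate1}) with free coefficients $a_k$, $\hat{a}_k$, $b_k$, built from exactly the identities you list, the same substitution of those identities into \eqref{eq:G_cocoercivity}, the same polarization of $\iprods{Gy^{k+1},Gy^k}$, and the same two Young steps in $c_1$ and $c_2$; the present lemma is then obtained by specializing the parameters. Everything in your outline up to the final paragraph is faithful to that proof and correct.

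The gap is precisely the step you defer, and it does not resolve the way you assert. For your completed square to be ``discardable in the lower bound,'' the residual quadratic must carry a nonnegative coefficient, i.e.\ you need $\frac{s^2(s-1)}{2\eta_{k-1}\gamma_{k-1}} \geq \frac{s^2(s-1)}{2\eta_k\gamma_k}$, equivalently $\eta_k\gamma_k \geq \eta_{k-1}\gamma_{k-1}$. Under \eqref{eq:iFKM4CE_param1} the opposite holds: $\eta_k = \frac{\eta t_k}{2(t_k-s)}$ is strictly decreasing while $\gamma_k \equiv \gamma$, so in $\Lc_k - \Lc_{k+1}$ the drift of the weight leaves the term $-\frac{s^3(s-1)}{\gamma\eta\, t_k t_{k-1}}\norms{z^k - x^{\star}}^2$, whose coefficient is strictly negative. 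A negative quadratic in $z^k - x^{\star}$ cannot be traded for a multiple of $\norms{e^k}^2$: already when $e^k = 0$ the coupling you want to play it against vanishes while the deficit remains, and the scheme can reach states with $\norms{z^k - x^{\star}}$ arbitrarily large (one huge $\widetilde{G}^{k-1}$ suffices) while $Gy^k$, $Gy^{k+1}$ stay bounded whenever $G$ is bounded and co-coercive, so no inequality of the claimed form can absorb it. Incidentally, this is exactly the delicate point of the paper's own argument: there the two $\norms{z^k - x^{\star}}^2$ terms are cancelled by choosing the Young parameter $c_k := \frac{\eta_k\gamma_k - \eta_{k-1}\gamma_{k-1}}{\gamma_{k-1}\gamma_k\eta_{k-1}\eta_k}$ equal to the weight difference --- this is what produces the negative contribution $2 - \frac{t_{k-1}(t_k-s)}{s}$ inside $b_k$ --- but under \eqref{eq:iFKM4CE_param1} this $c_k$ is negative, so Young's inequality is being invoked outside its range of validity. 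Your instinct that this is the genuine obstacle is therefore right, but your proposal does not overcome it; a correct completion would require either a parameter regime in which $\eta_k\gamma_k$ is nondecreasing, or a genuinely different device for controlling $\iprods{e^k, z^k - x^{\star}}$ together with the weight drift, not just careful bookkeeping.
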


\begin{lemma}\label{le:iFKM4CE_lower_bound_of_Lk}
	Let $\Lc_k$ be defined in \eqref{eq:iFKM4CE_Lk_func}.
	Then, under Assumption~\ref{as:A1}, if $s \geq 1 + 3\gamma$, then we have
	\begin{equation}\label{eq:iFKM4CE_iasyn_key_est2}
		\arraycolsep=0.2em
		\begin{array}{lcl}
			\Lc_k &\geq& \frac{\eta t_{k-1}^2}{8} \norms{Gy^k}^2+ \frac{s^2 (t_{k-1} - 3s)}{\eta t_{k-1}} \norms{z^k - x^{\star}}^2.
		\end{array}
	\end{equation}
\end{lemma}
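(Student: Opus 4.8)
The plan is to control the only indefinite piece of $\Lc_k$ in \eqref{eq:iFKM4CE_Lk_func}, namely the cross term $s t_{k-1}\iprods{Gy^k,\, y^k - z^k}$; the remaining two terms are already a nonnegative multiple of $\norms{Gy^k}^2$ and a nonnegative multiple of $\norms{z^k - x^{\star}}^2$. I would fix a solution $x^{\star}\in\zer{G}$, which exists by Assumption~\ref{as:A1}(i), and split
\[
s t_{k-1}\iprods{Gy^k,\, y^k - z^k} = s t_{k-1}\iprods{Gy^k,\, y^k - x^{\star}} - s t_{k-1}\iprods{Gy^k,\, z^k - x^{\star}}.
\]
For the first inner product, since $Gx^{\star}=0$, condition \eqref{eq:G_cocoercivity} of Assumption~\ref{as:A1}(ii) applied to the pair $(y^k, x^{\star})$ yields $\iprods{Gy^k, y^k - x^{\star}} \geq \beta\norms{Gy^k}^2 + \bar{\beta}\Dc(y^k, x^{\star}) \geq 0$. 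I would simply discard this nonnegative contribution; it costs nothing and is exactly what lets the argument survive the degenerate regime $\beta = 0$ (and $\bar{\beta}=0$), where the cross term supplies no curvature of its own.

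For the second inner product I would apply Young's inequality with a parameter $\lambda>0$ to split it between the two squared norms,
\[
- s t_{k-1}\iprods{Gy^k,\, z^k - x^{\star}} \geq -\tfrac{s t_{k-1}\lambda}{2}\norms{Gy^k}^2 - \tfrac{s t_{k-1}}{2\lambda}\norms{z^k - x^{\star}}^2.
\]
Substituting both bounds into \eqref{eq:iFKM4CE_Lk_func} and inserting the parameter choices from \eqref{eq:iFKM4CE_param1}, i.e.\ $\gamma_{k-1}=\gamma$ and $\eta_{k-1} = \tfrac{\eta t_{k-1}}{2(t_{k-1}-s)}$ (so that $\tfrac{s^2(s-1)}{2\eta_{k-1}\gamma_{k-1}} = \tfrac{s^2(s-1)(t_{k-1}-s)}{\eta\gamma t_{k-1}}$), reduces \eqref{eq:iFKM4CE_iasyn_key_est2} to two scalar coefficient inequalities, one multiplying $\norms{Gy^k}^2$ and one multiplying $\norms{z^k-x^{\star}}^2$. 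The delicate point is that the single free parameter $\lambda$ must satisfy both at once.

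The choice that closes both gaps is $\lambda = \tfrac{\eta t_{k-1}}{4s}$. With it, the Young penalty on $\norms{Gy^k}^2$ equals $\tfrac{s t_{k-1}\lambda}{2} = \tfrac{\eta t_{k-1}^2}{8}$, so the residual coefficient becomes $\tfrac{a_k}{2} - \tfrac{\eta t_{k-1}^2}{8}$; requiring this to be at least $\tfrac{\eta t_{k-1}^2}{8}$ is precisely the condition ensured by the value $a_k = \tfrac{\eta}{2} t_{k-1}^2$ fixed in the analysis. For the same $\lambda$, the penalty on $\norms{z^k-x^{\star}}^2$ is $\tfrac{s t_{k-1}}{2\lambda} = \tfrac{2s^2}{\eta}$, so the required inequality is $\tfrac{s^2(s-1)(t_{k-1}-s)}{\eta\gamma t_{k-1}} - \tfrac{2s^2}{\eta} \geq \tfrac{s^2(t_{k-1}-3s)}{\eta t_{k-1}}$. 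Here the hypothesis $s\geq 1+3\gamma$ enters decisively: it gives $\tfrac{s-1}{\gamma}\geq 3$, hence $\tfrac{(s-1)(t_{k-1}-s)}{\gamma} - (t_{k-1}-3s) \geq 2t_{k-1}$, which delivers exactly the slack $\tfrac{2s^2}{\eta}$ needed to absorb the penalty.

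I expect the main obstacle to be this simultaneous balancing rather than any individual estimate: the cross term must be paid for \emph{twice}, once against the budget supplied by $a_k$ and once against the slack supplied by $s\geq 1+3\gamma$, and the calculation shows that the single parameter $\lambda=\tfrac{\eta t_{k-1}}{4s}$ meets both demands with no room to spare (both become equalities). Confirming that the two coefficient inequalities hold with this $\lambda$, and that the bound remains valid even when $\beta=\bar{\beta}=0$, is the crux; everything else is routine substitution of \eqref{eq:iFKM4CE_param1} into \eqref{eq:iFKM4CE_Lk_func}.
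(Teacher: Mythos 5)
Your proposal is correct and takes essentially the same route as the paper's own proof: the paper splits the cross term through $x^{\star}$ in the same way, bounds $\iprods{Gy^k, y^k - x^{\star}}$ via co-coercivity (keeping the $\beta$-term and dropping it only at the end, which is equivalent to your immediate discard), and its explicit completion of the square $\frac{1}{8\eta}\norms{4s(z^k - x^{\star}) - \eta t_{k-1}Gy^k}^2$ is exactly your Young step with $\lambda = \frac{\eta t_{k-1}}{4s}$, yielding the same penalties $\frac{\eta t_{k-1}^2}{8}$ and $\frac{2s^2}{\eta}$. The final absorption of $\frac{2s^2}{\eta}$ using $s \geq 1 + 3\gamma$ also matches the paper's estimate of the coefficient $B_k$ verbatim.
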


\beforesubsec
\subsection{\textbf{\textit{Convergence of \eqref{eq:iFKM4CE} under general error approximation condition}}}\label{subsec:iFKM_convergence1}
\aftersubsec
\noindent\textbf{$\mathrm{(a)}$~General error approximation condition.}
Without loss of generality, suppose that $\widetilde{G}^k$ is a stochastic approximation of $Gy^k$ in \eqref{eq:iFKM4CE} such that the error $e^k := \widetilde{G}^k - Gy^k$ defined by \eqref{eq:oracle_error} satisfies the following definition.
\begin{definition}[\textbf{\textit{Error approximation condition}}]\label{de:error_bound_cond}
\textit{
Suppose that $\sets{ y^k }$ is generated by \eqref{eq:iFKM4CE}, and $e^k$ is the error between the [possibly stochastic] approximation $\widetilde{G}^k$ and $Gy^k$ defined by \eqref{eq:oracle_error} adapted to the filtration $\sets{\Fc_k}$.
There exists a sequence of nonnegative random variables $\sets{\Delta_k}$, three constants $\kappa \in (0, 1]$, $\Theta \geq 0$, and $\hat{\Theta} \geq 0$, an integer delay $\tau_k \in [0, \tau]$, and a nonnegative sequence $\sets{\delta_k}$ such that, almost surely, the following conditions hold:
\begin{equation}\label{eq:iFKM_error_bound}
\arraycolsep=0.2em
\hspace{-2ex}
\left\{\begin{array}{lcl}
\Expsn{k}{ \norms{e^k}^2 } &\leq & \Expsn{k}{\Delta_k}, \vspace{0ex}\\
\Expsn{k}{ \Delta_k} & \leq & (1-\kappa)\Delta_{k-1} + {\!\!\!\!} {\displaystyle\sum_{l=[k-\tau_k + 1]_{+}}^k } \big[\Theta\norms{Gy^l - Gy^{l-1}}^2 + \hat{\Theta}\Dc(y^l, y^{l-1})\big] + \frac{\delta_k}{t_k (t_k - s)},
\end{array}\right.
\hspace{-2ex}
\vspace{-1ex}
\end{equation}
where $y^0 = y^{-1} = \cdots = y^{\tau_0-1}$, $t_k$, and $s$ is given in \eqref{eq:iFKM4CE}, and $\Dc$ is in \eqref{eq:G_cocoercivity} of Assumption~\ref{as:A1}.
}
\end{definition}
Note that, in \eqref{eq:iFKM_error_bound}, if $\beta = 0$ in Assumption~\ref{as:A1}, then $\Theta = 0$, and if $\bar{\beta} = 0$, then $\hat{\Theta} = 0$.
To be consistent, we use the convention $\frac{c}{0} = +\infty$ for any $c \geq 0$ when necessary.
Our approximation condition \eqref{eq:iFKM_error_bound} in Definition~\ref{de:error_bound_cond} is sufficiently general to cover various scenarios, including deterministic and stochastic (both unbiased and biased) errors, delayed updates, and asynchronous updates.
Let us further discuss a few aspects of Definition~\ref{de:error_bound_cond}.
\begin{compactitem}[$\bullet$]
\item First, to the best of our knowledge, the condition \eqref{eq:iFKM_error_bound} is the first to address delayed inexact oracles for fixed-point-type methods in a unified manner (deterministic and stochastic).
\item Second, in the special case where \eqref{eq:CE} corresponds to an optimization problem, most asynchronous algorithms rely on specific mechanisms (e.g., deterministic, stochastic, or incremental aggregated approaches).
In contrast, the framework captured by Definition~\ref{de:error_bound_cond} allows us to encompass a broad class of algorithms, as will be demonstrated in Section~\ref{sec:app_of_iFKM}.
\item Third, in the absence of delay, our framework includes \cite{TranDinh2025a} as a special case.
However, the scheme \eqref{eq:iFKM4CE} differs substantially from \cite{TranDinh2025a} because it uses $Gy^k$ rather than $Gx^k$, which leads to a significantly different analysis.
In this delay-free setting, our framework also differs from \cite{tran2024accelerated} since we do not impose the unbiasedness requirement in Definition~\ref{de:error_bound_cond}.
\end{compactitem}
\noindent\textbf{$\mathrm{(b)}$~Lyapunov function and its key bound.}
The following lemma introduces a new Lyapunov function and establishes a key recursive inequality to analyze convergence of our scheme \eqref{eq:iFKM4CE}, whose proof is deferred to Appendix~\ref{subsec:apdx:le:iFKM4CE_descent_property}.

\begin{lemma}\label{le:iFKM4CE_descent_property}
	Suppose that Assumption~\ref{as:A1} holds for \eqref{eq:CE}, 
	and $\gamma \in [0,1]$, $s > 1 + \gamma$, $\tau \geq 1$, $\Lambda := 1 + s - \gamma$, and $\eta > 0$ are given.
	Let $\sets{(x^k, y^k, z^k)}$ be generated by \eqref{eq:iFKM4CE} using an estimator $\widetilde{G}^k$ satisfying Definition~\ref{de:error_bound_cond}, where $t_k$, $\gamma_k$, and $\eta_k$ are respectively updated by
	\begin{equation}\label{eq:iFKM4CE_le_params}
		\arraycolsep=0.2em
		\begin{array}{lcl}
			t_k := k + 3s + \tau, \quad \gamma_k := \gamma, \quad \text{and} \quad \eta_k := \frac{\eta t_k}{2(t_k - s)}.
		\end{array}
	\end{equation}
	Suppose further that $\kappa$ in \eqref{eq:iFKM_error_bound} satisfies $\alpha := \frac{2s + 1}{(s+1)^2}  < \kappa \leq 1$ and $0 \leq \tau_k \leq \tau$ for all $k \geq 0$.
	For $\Lc_k$ given in \eqref{eq:iFKM4CE_Lk_func} and $\theta := 1 - (1-\kappa)\alpha > 0$, consider the following \textbf{Lyapunov function}:
	\begin{equation}\label{eq:iFKM4CE_le_Vk}
		\hspace{-2ex}
		\arraycolsep=0.2em
		\begin{array}{lcl}
			V_k := \Lc_k + t_{k-1}(t_{k-1} {\!} - {\!} s)\left[\frac{ 2\beta - (1+\tau)\eta }{2} \norms{Gy^k {\!} - Gy^{k \!- \! 1}}^2 + \bar{\beta} \Dc(y^{k}, y^{k \!-\! 1}) +  \frac{ \Lambda\alpha(1-\kappa)\eta}{2\tau\theta} \Delta_{k-1} \right].
		\end{array}
		\hspace{-2ex}
	\end{equation}
	Then, we can show that almost surely 
	\begin{equation}\label{eq:iFKM4CE_le_key_est}
		\arraycolsep=0.2em
		\begin{array}{lcl}
			\Expsn{k}{ V_{k+1} } &\leq & V_k - U_k + \mu_k{\displaystyle\sum_{l=[k-\tau_k + 1]_{+} }^k} S_l - \nu_k  S_k  + \omega_k, 
		\end{array}
	\end{equation}
	where  
	\begin{equation}\label{eq:iFKM4CE_le_Uk}
		\hspace{-2ex}
		\arraycolsep=0.2em
		\left\{\begin{array}{lcl}	
			U_k &:=& \frac{ t_{k-1}(t_{k-1}-s)}{2} \left[ \frac{\Lambda(1 - \kappa)(s+1)\eta }{2\tau\theta s^2}\Delta_{k-1}  + \beta \norms{Gy^k - Gy^{k-1}}^2 + \bar{\beta} \Dc(y^k, y^{k-1}) \right] \vspace{1ex}\\
			&& + {~} \frac{\eta (s - \gamma - 1) t_k + 4\beta s(s-1)}{4} \norms{Gy^k}^2  , \vspace{1.5ex}\\
			
			S_k &:=& \Theta\norms{Gy^k - Gy^{k-1}}^2 + \hat{\Theta}\Dc(y^k, y^{k-1}), \vspace{1.5ex}\\
			
			\omega_k &:=& \frac{\Lambda \eta}{2\tau\theta} \delta_k,  \quad \mu_k := \frac{\Lambda \eta}{2\tau\theta} t_k(t_k - s), \quad\text{and} \quad  \nu_k := \frac{1}{2} t_{k-1} (t_{k-1} - s)\min\set{\frac{\beta  - (1+\tau)\eta}{\Theta}, \frac{\bar{\beta}}{\hat{\Theta}}}.
		\end{array}\right.
		\hspace{-5ex}
	\end{equation} 
\end{lemma}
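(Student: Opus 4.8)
The plan is to upgrade the pathwise one-step bound of Lemma~\ref{le:iFKM4CE_key_estimate2} into an expected descent for the augmented Lyapunov function $V_k$ of \eqref{eq:iFKM4CE_le_Vk}, in four moves: instantiate the free constants, fold in the forward terms of $V_{k+1}$, convert the cross term by co-coercivity, and absorb the delayed inexactness through Definition~\ref{de:error_bound_cond}. I would begin by applying Lemma~\ref{le:iFKM4CE_key_estimate2} with $c_1=c_2=\tau$; the rule \eqref{eq:iFKM4CE_le_params} is admissible there since $t_k=t_{k-1}+1$ and $t_0=3s+\tau>2s$. The choice $c_1=\tau$ is made so that the coefficient $\tfrac12[2\beta-(1+c_1)\eta]\,t_k(t_k-s)$ of $\norms{Gy^{k+1}-Gy^k}^2$ and the coefficient $\bar\beta\,t_k(t_k-s)$ of $\Dc(y^{k+1},y^k)$ coming out of the estimate coincide exactly with the forward movement weights defining $V_{k+1}$.

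Taking $\Expsn{k}{\cdot}$ and writing $\Lc_{k+1}=V_{k+1}-t_k(t_k-s)[\cdots]$, these forward terms cancel against the matching entries of $V_{k+1}$, leaving only the current-iterate quantities $\norms{Gy^k}^2$ and $\iprods{Gy^k,\,y^k-x^{\star}}$, the error $\Expsn{k}{\norms{e^k}^2}$, and the difference of the $\Delta$-storage terms of $V_k$ and $V_{k+1}$. Next I would bound the cross term with \eqref{eq:G_cocoercivity} evaluated at $y=x^{\star}$: since $Gx^{\star}=0$, $\bar\beta\Dc(y^k,x^{\star})\ge0$, and $s(s-1)>0$, one gets $s(s-1)\iprods{Gy^k,\,y^k-x^{\star}}\ge\beta s(s-1)\norms{Gy^k}^2$, which combined with $\tfrac{\eta(s-\gamma-1)t_k}{4}\norms{Gy^k}^2$ produces precisely the residual coefficient $\tfrac14[\eta(s-\gamma-1)t_k+4\beta s(s-1)]$ of $\norms{Gy^k}^2$ in $U_k$ from \eqref{eq:iFKM4CE_le_Uk}.

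For the inexactness I would use the first line of \eqref{eq:iFKM_error_bound} to replace $\Expsn{k}{\norms{e^k}^2}$ by $\Expsn{k}{\Delta_k}$, then the recursion in the second line to unfold $\Expsn{k}{\Delta_k}$ into the contraction $(1-\kappa)\Delta_{k-1}$, the delayed movement sum $\sum_{l=[k-\tau_k+1]_{+}}^k S_l$, and the residual $\tfrac{\delta_k}{t_k(t_k-s)}$. Combining the error weight $P:=\tfrac{\eta\Lambda}{2\tau}t_k(t_k-s)$ (produced by $c_1=c_2=\tau$) with the storage weight $d_k:=\tfrac{\Lambda\alpha(1-\kappa)\eta}{2\tau\theta}t_k(t_k-s)$ gives $P+d_k=\mu_k$ by the identity $\theta+(1-\kappa)\alpha=1$; this is how $\mu_k$, the delayed sum $\mu_k\sum_l S_l$, and $\omega_k=\tfrac{\Lambda\eta}{2\tau\theta}\delta_k$ in \eqref{eq:iFKM4CE_le_key_est} emerge.

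The hard part will be dominating the three static quantities that survive --- the feedback $\Delta_{k-1}$ and the movement terms $\norms{Gy^k-Gy^{k-1}}^2$, $\Dc(y^k,y^{k-1})$ carried by the $V_k$-weights --- by $-U_k-\nu_k S_k$. The movement terms are the easier case: matching the $V_k$-weight $\tfrac12 t_{k-1}(t_{k-1}-s)[2\beta-(1+\tau)\eta]$ against the $\beta$-entry of $U_k$ leaves the coefficient $\tfrac12 t_{k-1}(t_{k-1}-s)[\beta-(1+\tau)\eta]$, nonnegative exactly when $\eta\le\tfrac{\beta}{1+\tau}$, and likewise $\tfrac12 t_{k-1}(t_{k-1}-s)\bar\beta$ for $\Dc$; these are absorbed by $-\nu_k S_k$ because the min-structure of $\nu_k$ forces $\nu_k\Theta\le\tfrac12 t_{k-1}(t_{k-1}-s)[\beta-(1+\tau)\eta]$ and $\nu_k\hat\Theta\le\tfrac12 t_{k-1}(t_{k-1}-s)\bar\beta$. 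The genuine obstacle is the $\Delta_{k-1}$-balance: the recursion leaves the coefficient $\mu_k(1-\kappa)$ on $\Delta_{k-1}$, which must be dominated by the stored weight $d_{k-1}=\tfrac{\Lambda\alpha(1-\kappa)\eta}{2\tau\theta}t_{k-1}(t_{k-1}-s)$ in $V_k$ together with the $\Delta_{k-1}$-descent reserved inside $U_k$. Because the weights grow as $t_k(t_k-s)=t_{k-1}(t_{k-1}-s)+(2t_{k-1}+1-s)$, reconciling these growing quadratic weights against the geometric contraction $(1-\kappa)$ is precisely what pins down $\alpha=\tfrac{2s+1}{(s+1)^2}$, the definition $\theta=1-(1-\kappa)\alpha$, and the standing requirement $\alpha<\kappa\le1$. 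Establishing this domination uniformly in $k$ is, I expect, the technical heart of the argument; once it is in place, collecting all terms yields \eqref{eq:iFKM4CE_le_key_est}.
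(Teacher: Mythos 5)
Your proposal reconstructs the paper's own route almost step for step: apply Lemma~\ref{le:iFKM4CE_key_estimate2} with $c_1=c_2=\tau$ (admissible since $t_k=t_{k-1}+1$ and $t_{-1}=3s+\tau-1>2s$), take $\Expsn{k}{\cdot}$, use \eqref{eq:G_cocoercivity} at $y=x^{\star}$ to turn $s(s-1)\iprods{Gy^k,y^k-x^{\star}}$ into $\beta s(s-1)\norms{Gy^k}^2$, cancel the forward movement terms against those stored in $V_{k+1}$, bound $\Expsn{k}{\norms{e^k}^2}\le\Expsn{k}{\Delta_k}$ and unfold the recursion in \eqref{eq:iFKM_error_bound}, and absorb the backward movement terms through the min-structure of $\nu_k$. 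Your identity $P+d_k=\mu_k$ via $\theta+(1-\kappa)\alpha=1$ is correct and is exactly how $\mu_k$ and $\omega_k$ arise, and your absorption inequalities $\nu_k\Theta\le\tfrac12 t_{k-1}(t_{k-1}-s)[\beta-(1+\tau)\eta]$ and $\nu_k\hat\Theta\le\tfrac12 t_{k-1}(t_{k-1}-s)\bar\beta$ are complete and correct.

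The genuine gap is precisely the step you defer. Collecting the $\Delta_{k-1}$ coefficients, \eqref{eq:iFKM4CE_le_key_est} requires
\begin{equation*}
\frac{\Lambda\alpha(1-\kappa)\eta}{2\tau\theta}\,t_{k-1}(t_{k-1}-s)\;\ge\;\frac{\Lambda(1-\kappa)\eta}{2\tau\theta}\,t_k(t_k-s)\;+\;\frac{\Lambda(1-\kappa)(s+1)\eta}{4\tau\theta s^2}\,t_{k-1}(t_{k-1}-s),
\end{equation*}
which for $\kappa<1$ reduces, after cancellation, to $t_k(t_k-s)\le\bigl(\alpha-\tfrac{s+1}{2s^2}\bigr)\,t_{k-1}(t_{k-1}-s)$. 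Since $t_k>t_{k-1}$, this forces $\alpha>1+\tfrac{s+1}{2s^2}$, so it is \emph{impossible} with the value $\alpha=\tfrac{2s+1}{(s+1)^2}<1$ that you (following the lemma statement) place in the storage weight; the "pinning down" of that $\alpha$ you anticipate cannot happen. What the paper's proof actually does here is work with the constant $\alpha:=\tfrac{(s+1)^2}{s^2}>1$ inside the Lyapunov weight --- note the statement's threshold is exactly consistent with this choice, since $\kappa>\tfrac{2s+1}{(s+1)^2}\iff(1-\kappa)\tfrac{(s+1)^2}{s^2}<1$, i.e., it is what makes the corresponding $\theta$ positive --- and then verify elementarily, using that the ratio $t_k(t_k-s)/[t_{k-1}(t_{k-1}-s)]$ is decreasing in $t_{k-1}$ and $t_{k-1}=k+3s+\tau-1\ge 3s$, that
\begin{equation*}
\frac{t_k(t_k-s)}{t_{k-1}(t_{k-1}-s)}\;\le\;\frac{(3s+1)(2s+1)}{6s^2}\;=\;1+\frac{5s+1}{6s^2}\;\le\;1+\frac{3s+1}{2s^2}\;=\;\frac{(s+1)^2}{s^2}-\frac{s+1}{2s^2}.
\end{equation*}
So to close your argument you must (i) prove this quadratic-growth inequality, and (ii) recognize that the $\alpha$ appearing in the storage weight must be a constant strictly larger than $1$ (the paper's statement and its appendix proof in fact use two different $\alpha$'s, an inconsistency your blind reconstruction inherited); with $\alpha=\tfrac{(s+1)^2}{s^2}$ and the matching $\theta$, everything else you set up goes through verbatim.
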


\noindent\textbf{$\mathrm{(c)}$~Main result 1: Convergence rates and iteration-complexity.}
For given $s > 1$, $\tau \geq 1$, and $\gamma \in [0, 1]$, we define  $\Lambda := 1 + s - \gamma > 0$ and $\alpha := \frac{2s + 1}{(s+1)^2} \in (0, 1)$ as before.
Given $\kappa$, $\Theta$, and $\hat{\Theta}$ in Definition~\ref{de:error_bound_cond} such that $\alpha < \kappa \leq 1$, and $\beta$ and $\bar{\beta}$ in Assumption~\ref{as:A1}, let us denote $\theta := 1 - (1-\kappa)\alpha > 0$ and define the following quantity:
\begin{equation}\label{eq:iFKM4CE_eta}
	\arraycolsep=0.2em
	\bar{\eta} := \left\{ \begin{array}{ll}
		\frac{3\theta \bar{\beta}}{7 \Lambda \hat{\Theta} } \quad &\text{if } 0 < \bar{\beta} \leq \frac{7\Lambda \hat{\Theta} \beta}{7\Lambda \Theta + 3\theta(1+\tau)}, \vspace{1ex}\\
		
		\frac{3\theta\beta}{7\Lambda\Theta  + 3\theta(1+\tau)} \quad &\text{if } \bar{\beta} = 0 \ \text{ or } \ \frac{7\Lambda \hat{\Theta} \beta}{7\Lambda \Theta + 3\theta(1+\tau)} \leq \bar{\beta} \leq \frac{\hat{\Theta}}{\Theta}\beta.
	\end{array}\right.
\end{equation}
Then, we are ready to state and prove our first main result in this section.

\begin{theorem}\label{th:iFKM4CE_convergence_expectation}
	Suppose that Assumption~\ref{as:A1} holds for  \eqref{eq:CE}.
	Let $\sets{(x^k, y^k, z^k)}$ be generated by \eqref{eq:iFKM4CE} such that $\widetilde{G}^k$ satisfies Definition~\ref{de:error_bound_cond} with $\Theta \bar{\beta} \leq \hat{\Theta}\beta$, $\alpha := \frac{2s + 1}{(s+1)^2}  < \kappa \leq 1$ for a given $s > 1$, and a bounded delay $0 \leq \tau_k \leq \tau$ for a given $\tau \geq 1$.
	Suppose further that $\bar{\eta}$ is defined by \eqref{eq:iFKM4CE_eta}, $\gamma \in [0,1]$, $s \geq 1 + 3\gamma$, and $t_k$, $\gamma_k$, and $\eta_k$ are respectively updated as follows:
	\begin{equation}\label{eq:iFKM4CE_params}
		\arraycolsep=0.2em
		\begin{array}{lll}
			t_k := k + 3s + \tau,  \quad \gamma_k := \gamma, \quad \textrm{and}  \quad 
			\eta_k := \dfrac{\eta t_k}{2(t_k - s)}, \quad\text{where}\quad 0 < \eta\leq \bar{\eta}.
		\end{array}
	\end{equation}
	Then, the following statements hold.
	\begin{compactitem}
	\item[\textbf{$\mathrm{(i)}$~The $\BigOs{1/k^2}$ convergence rate in expectation.}]
	We have
	\begin{equation}\label{eq:iFKM4CE_th_BigO_rates}
		\arraycolsep=0.2em
		\begin{array}{lcl}
			\displaystyle \Expn{\norms{Gy^K}^2} &\leq& \displaystyle \frac{4}{\eta (K + 3s + \tau - 1)^2} \left(\Rc_0^2 + \frac{\Lambda\eta \Sc_K }{\theta\tau}\right),
		\end{array}
	\end{equation}
	where $\theta := 1 - (1-\kappa)\alpha$, \  $\Rc_0^2 := \frac{\eta (3s + \tau - 1)^2}{2} \norms{Gy^0}^2 + \frac{2s^3}{\eta \gamma}\norms{y^0 - x^{\star}}^2$, and $\Sc_K := \sum_{k=0}^K\delta_k$.
	\item[\textbf{$\mathrm{(ii)}$~Summability bounds in expectation.}] 
	If $\Sc_{\infty} := \sum_{k=0}^{\infty}\delta_k < +\infty$, then we have
	\begin{equation}\label{eq:iFKM4CE_th_summability_bounds}
		\arraycolsep=0.2em
		\begin{array}{lcl}
			\displaystyle \sum_{k=0}^\infty (k +  \tau) \Expn{\norms{x^{k+1} - x^k}^2}  < +\infty 
			\quad\textrm{and} \quad  
			\sum_{k=0}^\infty (k  + \tau) \Expn{\norms{Gy^k}^2}  < +\infty.
		\end{array}
	\end{equation}
	\item[\textbf{$\mathrm{(iii)}$~The $\SmallOs{1/k^2}$ convergence in expectation.}] 
	If $\Sc_{\infty} := \sum_{k=0}^{\infty}\delta_k < +\infty$, then we get
	\begin{equation}\label{eq:iFKM4CE_th_smallo_rates}
	\arraycolsep=0.2em
	\begin{array}{lcl}
		\displaystyle \lim_{k \to \infty} (k + \tau)^2 \Expn{\norms{x^{k+1} - x^k}^2} = 0 
		\quad\textrm{and}\quad
		\displaystyle \lim_{k \to \infty} (k + \tau)^2 \Expn{\norms{Gy^k}^2} &=& 0.
	\end{array}
\end{equation}	
	\item[\textbf{$\mathrm{(iv)}$~Iteration-complexity.}]
	For a given tolerance $\epsilon > 0$, if we choose $\delta_k := \frac{\sigma^2}{(k+r)^{1+\omega}}$ for some $r > 0$, $\omega > 0$, and $\sigma \geq 0$, then the total number of iterations $k$ to achieve $\Expn{\norms{Gy^k}^2} \leq \epsilon^2$ is at most 
	\begin{equation*}
	\arraycolsep=0.2em
	k := \left\{\begin{array}{ll}
	\BigO{\frac{\sigma}{\epsilon\sqrt{\tau}} + \frac{\tau}{\epsilon} + \frac{\hat{\Theta}}{\bar{\beta}\epsilon}} & \text{if $0 < \bar{\beta} \leq \frac{7\Lambda \hat{\Theta} \beta}{7\Lambda \Theta + 3\theta(1+\tau)}$}, \vspace{1ex}\\
	\BigO{\frac{\sigma}{\epsilon\sqrt{\tau}} + \frac{\tau}{\epsilon} + \frac{\Theta + \tau}{\beta\epsilon}} & \text{if $\bar{\beta} = 0$ or $\frac{7\Lambda \hat{\Theta} \beta}{7\Lambda \Theta + 3\theta(1+\tau)} \leq \bar{\beta} \leq \frac{\hat{\Theta}}{\Theta}\beta$}.
	\end{array}\right.
	\end{equation*}
\end{compactitem}	
\end{theorem}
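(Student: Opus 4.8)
The plan is to derive everything from the one-step recursion \eqref{eq:iFKM4CE_le_key_est} of Lemma~\ref{le:iFKM4CE_descent_property}, which has already folded the delayed inexact oracle into the Lyapunov function $V_k$. First I would take total expectation in \eqref{eq:iFKM4CE_le_key_est} and sum over $k = 0, 1, \dots, K-1$, so that the subtracted terms $-U_k$ and $-\nu_k S_k$, the added $\omega_k$, and the delayed coupling term $\mu_k\sum_{l=[k-\tau_k+1]_{+}}^{k} S_l$ all accumulate. The decisive manipulation is a change of summation order for the delayed term: since $0\le \tau_k \le \tau$, each $S_l$ enters the inner window only for indices $k\in\set{l,\dots,l+\tau-1}$, and because $\mu_k = \frac{\Lambda\eta}{2\tau\theta}t_k(t_k-s)$ is increasing in $k$,
\[
\sum_{k=0}^{K-1}\mu_k \sum_{l=[k-\tau_k+1]_{+}}^{k} S_l \;\le\; \sum_{l\ge 0} S_l \sum_{k=l}^{\min\set{l+\tau-1,\,K-1}} \mu_k \;\le\; \sum_{l\ge 0} \tau\,\mu_{l+\tau-1}\, S_l .
\]
The goal is then to absorb this into $-\sum_k \nu_k S_k$, i.e.\ to verify $\tau\,\mu_{l+\tau-1}\le \nu_l$ for every relevant $l$.

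The absorption is exactly what the bound $0<\eta\le\bar\eta$ in \eqref{eq:iFKM4CE_eta} is engineered for. Writing the requirement as $\frac{\Lambda\eta}{\theta}\,\rho_l \le \min\set{\tfrac{\beta-(1+\tau)\eta}{\Theta},\,\tfrac{\bar\beta}{\hat\Theta}}$ with $\rho_l := \tfrac{t_{l+\tau-1}(t_{l+\tau-1}-s)}{t_{l-1}(t_{l-1}-s)}$, I would use the initialization $y^0=y^{-1}=\cdots=y^{\tau_0-1}$, which forces $S_l=0$ for the smallest indices, so that only indices $l\gtrsim\tau_0$ contribute and $\rho_l$ is bounded by the explicit constant built into $\bar\eta$ (the factor $\tfrac{7}{3}$, equivalently the denominators in \eqref{eq:iFKM4CE_eta}); the two branches of the $\min$ reproduce the two cases of $\bar\eta$. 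The condition $\Theta\bar\beta\le\hat\Theta\beta$ and $\eta\le\bar\eta\le\tfrac{\beta}{1+\tau}$ keep $\nu_k\ge 0$ and $2\beta-(1+\tau)\eta\ge 0$, so every term in $U_k$ and every extra term of $V_k-\Lc_k$ stays nonnegative. After cancellation I obtain the master inequality $\Expn{V_K}+\sum_{k=0}^{K-1}\Expn{U_k}\le V_0+\sum_{k=0}^{K-1}\omega_k$. For part~$\mathrm{(i)}$ I would drop $\sum\Expn{U_k}\ge 0$, invoke Lemma~\ref{le:iFKM4CE_lower_bound_of_Lk} (valid since $s\ge 1+3\gamma$ and $t_{K-1}\ge 3s$) to get $\Expn{V_K}\ge \tfrac{\eta t_{K-1}^2}{8}\Expn{\norms{Gy^K}^2}$, and evaluate $V_0=\Lc_0$, where $z^0=y^0$ kills the cross term and yields $\Rc_0^2$, while $\sum_k\omega_k=\frac{\Lambda\eta}{2\tau\theta}\Sc_K$; rearranging gives \eqref{eq:iFKM4CE_th_BigO_rates}.

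For part~$\mathrm{(ii)}$ I would instead keep $\sum\Expn{U_k}$ and use $\Sc_\infty<+\infty$ together with $V_K\ge 0$ to conclude $\sum_{k}\Expn{U_k}<+\infty$. Since $U_k\ge \frac{\eta(s-\gamma-1)t_k}{4}\norms{Gy^k}^2$ with $s-\gamma-1>0$ and $t_k\asymp k+\tau$, this already gives $\sum_k(k+\tau)\Expn{\norms{Gy^k}^2}<+\infty$; the $x$-difference summability then follows by expanding $x^{k+1}-x^k$ through the scheme \eqref{eq:iFKM4CE} into $Gy^k$, consecutive differences $Gy^k-Gy^{k-1}$, and errors $e^k$, and bounding these by the remaining summable pieces of $U_k$ (the $\beta\norms{Gy^k-Gy^{k-1}}^2$ and $\Delta_{k-1}$ terms, the latter controlling $\Expn{\norms{e^k}^2}$ via Definition~\ref{de:error_bound_cond}). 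For part~$\mathrm{(iii)}$, the master inequality also shows $\Expn{V_{k+1}}\le\Expn{V_k}+\omega_k$ with $\sum\omega_k<+\infty$, so $\Expn{V_k}$ converges; combining this with the summability from~$\mathrm{(ii)}$ upgrades $\BigOs{1/k^2}$ to $\SmallOs{1/k^2}$ by the standard argument that if $(k+\tau)^2\Expn{\norms{Gy^k}^2}$ had a strictly positive limit then $\sum(k+\tau)\Expn{\norms{Gy^k}^2}$ would diverge, a contradiction; the same reasoning applies to $\norms{x^{k+1}-x^k}^2$. Finally, part~$\mathrm{(iv)}$ is mechanical: substituting $\delta_k=\frac{\sigma^2}{(k+r)^{1+\omega}}$ gives $\Sc_K\le\BigOs{\sigma^2}$, plugging into \eqref{eq:iFKM4CE_th_BigO_rates} and solving $\Expn{\norms{Gy^k}^2}\le\epsilon^2$ for $k$, then tracking how $\Rc_0^2/\eta$ (whose dominant pieces are $\BigOs{\tau^2}$ and $\BigOs{1/\eta^2}$) and $\frac{\Lambda\sigma^2}{\theta\tau}$ scale once $\eta=\bar\eta$ is inserted through the two cases of \eqref{eq:iFKM4CE_eta}, matching the two stated complexity bounds.

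The main obstacle is the summation-order swap followed by the absorption $\tau\mu_{l+\tau-1}\le\nu_l$: it is here that the delay parameter $\tau$, the vanishing of early differences from the staggered initialization, and the precise threshold $\bar\eta$ must interact so that the delayed feedback term is exactly dominated rather than merely partially controlled. Getting the constant $\rho_l$ uniformly bounded (so that the explicit $\bar\eta$ suffices for all $k$, including the transient small-$k$ regime) is the delicate point; once the clean master inequality $\Expn{V_K}+\sum\Expn{U_k}\le V_0+\sum\omega_k$ is in hand, parts~$\mathrm{(i)}$--$\mathrm{(iv)}$ follow by the routine Lyapunov, summability, and $\SmallO$-upgrade arguments sketched above.
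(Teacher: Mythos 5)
Your overall skeleton for part $\mathrm{(i)}$ and part $\mathrm{(iv)}$ matches the paper: verify the absorption condition on $\mu_k,\nu_k$, apply the supermartingale-type lemma (Lemma~\ref{le:Asyn_lemma_A2}) to get $\Expn{V_{K}}+\sum_k\Expn{U_k}\le V_0+\sum_k\omega_k$, lower-bound $V_K$ via Lemma~\ref{le:iFKM4CE_lower_bound_of_Lk}, evaluate $V_0=\Lc_0$, and then do the complexity bookkeeping. However, there is a genuine gap in your parts $\mathrm{(ii)}$--$\mathrm{(iii)}$. For the $\SmallOs{1/k^2}$ rates you argue: ``if $(k+\tau)^2\Expn{\norms{Gy^k}^2}$ had a strictly positive limit then $\sum_k(k+\tau)\Expn{\norms{Gy^k}^2}$ would diverge, a contradiction.'' This only shows that \emph{if} the limit exists it must be zero (equivalently, that $\liminf_k (k+\tau)^2\Expn{\norms{Gy^k}^2}=0$); summability alone does not preclude the weighted sequence from oscillating with a positive limsup. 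Your fallback, convergence of $\Expn{V_k}$, does not repair this: Lemma~\ref{le:iFKM4CE_lower_bound_of_Lk} gives only the one-sided inequality $V_k\ge\frac{\eta t_{k-1}^2}{8}\norms{Gy^k}^2+\cdots$, so convergence of $\Expn{V_k}$ says nothing about convergence of $t_{k-1}^2\Expn{\norms{Gy^k}^2}$. The paper closes exactly this hole with dedicated machinery: it introduces $v^k:=x^{k+1}-x^k+\eta_k\widetilde{G}^k$ and $w^k:=x^{k+1}-x^k+\eta_k e^k$, derives from the scheme the exact recursion $t_{k-1}v^k=(t_{k-2}-s)v^{k-1}+(s-\gamma)\eta_{k-1}\widetilde{G}^{k-1}$ (and an analogous, more involved one for $w^k$), converts these into quasi-Fej\'er inequalities of the form $t_{k-1}^2\norms{v^k}^2\le t_{k-2}^2\norms{v^{k-1}}^2-st_{k-2}\norms{v^{k-1}}^2+(\text{summable})$, and then invokes Lemma~\ref{lem:tech_convergence1} (existence of the limit plus summability) followed by Lemma~\ref{lem:tech_convergence2} (an existing limit with summable tails must be zero), finally transferring back to $\norms{Gy^k}^2$ and $\norms{x^{k+1}-x^k}^2$ via $\frac{\eta^2}{4}\norms{Gy^k}^2\le 2\norms{v^k}^2+2\norms{w^k}^2$. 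The same machinery is what makes your part $\mathrm{(ii)}$ work: your plan to ``expand $x^{k+1}-x^k$ and bound by the summable pieces of $U_k$'' stalls because $x^{k+1}-x^k=\frac{s}{t_{k-1}}(z^k-x^k)-\eta_k\widetilde{G}^k$ involves $z^k-x^k$, which is not controlled by $U_k$; it is precisely the identity $t_{k-1}v^k=s(z^k-x^k)$ and the $v^k$-recursion that control it.

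Two smaller points. First, your absorption step uses the crude bound $\sum_{k=l}^{l+\tau-1}\mu_k\le\tau\,\mu_{l+\tau-1}$; since $\mu_k\propto t_k(t_k-s)$ is increasing, this loses roughly a factor of $2$ relative to the paper's direct evaluation $\sum_{l=0}^{\tau-1}\mu_{k+l}\le\frac{7\Lambda\eta}{6\theta}t_{k-1}(t_{k-1}-s)$ (obtained from the arithmetic sums and $t_{k-1}\ge 2s+\tau$), so with your bound you would only justify a strictly smaller stepsize threshold than the $\bar\eta$ of \eqref{eq:iFKM4CE_eta} stated in the theorem. Second, the uniform boundedness of your ratio $\rho_l$ has nothing to do with the staggered initialization $y^0=y^{-1}=\cdots$ forcing early $S_l$ to vanish; it follows for \emph{all} $k\ge 0$ from the offset in $t_k=k+3s+\tau$, which guarantees $t_{k-1}\ge 2s+\tau$ from the start.
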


\begin{proof}
First, for $\nu_k$ and $\mu_k$ in \eqref{eq:iFKM4CE_le_Uk}, and $\theta := 1 - (1-\kappa)\alpha > 0$, we need to guarantee $\sum_{l=0}^{\tau-1} \mu_{k+l} \leq \nu_k$ for \eqref{eq:iFKM4CE_le_key_est}  to apply Lemma~\ref{le:Asyn_lemma_A2} for our subsequent proofs. 
Indeed, we have
	\begin{equation*}
		\arraycolsep=0.2em
		\begin{array}{lcl}
			\sum_{l=0}^{\tau-1} \mu_{k+l}  & = &  \frac{\Lambda\eta}{2\tau \theta } \sum_{l=0}^{\tau-1} (t_{k-1} + l + 1)(t_{k-1} - s + l + 1)  \vspace{1ex}\\
			&=& \frac{\Lambda\eta}{2\tau\theta } \left[ \tau t_{k-1}(t_{k-1} - s) + \frac{\tau(\tau+1)}{2}(2t_{k-1} - s) + \frac{\tau(\tau+1)(2\tau+1)}{6} \right]  \vspace{1ex}\\
			&\stackrel{\tiny\textcircled{1}}{\leq}& \frac{\Lambda\eta}{2\tau\theta } \left[ \tau t_{k-1}(t_{k-1} - s) + \tau t_{k-1}(t_{k-1} - s) + \frac{1}{3}\tau t_{k-1}(t_{k-1} - s) \right]  \vspace{1ex}\\
			&=& \frac{ 7\Lambda\eta}{6\theta } t_{k-1}(t_{k-1} - s),
		\end{array}
	\end{equation*}
	where {\tiny\textcircled{1}} holds due to the choice $t_{k-1} \geq 2s + \tau$, $s > 1$ and $\tau \geq 1$. 

	Now, given this bound, we consider the following cases:
	\begin{compactitem}[$\bullet$]
		\item 
		If $\bar{\beta} = 0$, then $\nu_k$ reduces to $\nu_k := \frac{\beta - (1+\tau)\eta}{2\Theta}t_{k-1}(t_{k-1}-s)$.
		To guarantee $\sum_{l=0}^{\tau-1} \mu_{k+l} \leq \nu_k$, we require $\frac{7\Lambda\eta}{6\theta} \leq \frac{\beta - (1+\tau)\eta}{2\Theta}$, leading to $0 \leq \eta \leq \bar{\eta} := \frac{3\theta\beta}{7\Lambda\Theta  + 3\theta(1+\tau)}$.
		
		\item 
		If $0 < \bar{\beta} \leq \frac{\hat{\Theta}}{\Theta}\beta$, then we consider two sub-cases:
		\begin{compactitem}[$\diamond$]
			\item If $0 \leq \eta \leq \frac{\Theta}{1+\tau} (\frac{\beta}{\Theta} - \frac{\bar{\beta}}{\hat{\Theta}})$, then $\min\set{ \frac{\beta - (1+\tau)\eta}{\Theta}, \ \frac{\bar{\beta}}{\hat{\Theta}} } = \frac{\bar{\beta}}{\hat{\Theta}}$.
			To guarantee $\sum_{l=0}^{\tau-1} \mu_{k+l} \leq \nu_k$, we require $\frac{7\Lambda\eta}{3\theta} \leq \frac{\bar{\beta}}{\hat{\Theta}}$, which is equivalent to $\eta \leq \bar{\eta} :=   \frac{3\theta \bar{\beta}}{7\Lambda  \hat{\Theta}}$.
			\begin{compactitem}[$\triangleright$]
				\item If $0 < \bar{\beta} \leq \frac{7\Lambda \hat{\Theta} \beta}{7\Lambda \Theta + 3\theta (1+\tau)}$, then we get $0 < \eta \leq  \bar{\eta} :=  \frac{3\theta \bar{\beta}}{7 \Lambda \hat{\Theta}}$.
				\item If $\frac{7\Lambda \hat{\Theta} \beta}{7\Lambda \Theta + 3\theta (1+\tau)} \leq \bar{\beta} \leq \frac{\hat{\Theta}}{\Theta}\beta$, then we get $0 < \eta \leq \frac{\Theta}{1+\tau} \left(\frac{\beta}{\Theta} - \frac{\bar{\beta}}{\hat{\Theta}}\right)$.
			\end{compactitem}
			
			\item If $\eta \geq \frac{\Theta}{1+\tau} \left(\frac{\beta}{\Theta} - \frac{\bar{\beta}}{\hat{\Theta}}\right) \geq 0$, then $\min\set{ \frac{\beta - (1+\tau) \eta}{\Theta}, \ \frac{\bar{\beta}}{\hat{\Theta}} } = \frac{\beta - (1+\tau)\eta}{\Theta}$.
			To guarantee $\sum_{l=0}^{\tau-1} \mu_{k+l} \leq \nu_k$, we require $\frac{7\Lambda \eta}{3\theta } \leq \frac{\beta - (1+\tau)\eta}{\Theta}$, which is equivalent to $\eta \leq \bar{\eta} :=  \frac{3\theta \beta}{7\Lambda \Theta + 3\theta (1+\tau)}$.
			This condition holds if $\frac{7\Lambda \hat{\Theta}}{7\Lambda \Theta + 3\theta (1+\tau)} \beta \leq \bar{\beta} \leq \frac{\hat{\Theta}}{\Theta}\beta$, and thus, we finally get $\frac{\Theta}{1+\tau} \left(\frac{\beta}{\Theta} - \frac{\bar{\beta}}{\hat{\Theta}}\right) \leq \eta \leq \bar{\eta} :=  \frac{3\theta \beta}{7\Lambda \Theta + 3\theta (1+\tau)}$.
		\end{compactitem}
	\end{compactitem}
	Combining the above arguments, it confirms the choice of $\eta$ in \eqref{eq:iFKM4CE_params}, with $\bar{\eta}$ given by \eqref{eq:iFKM4CE_eta}.
	
	Next, under the condition $\sum_{l=0}^{\tau-1} \mu_{k+l} \leq \nu_k$, we are ready to prove the remaining results.
	
	\noindent\textbf{$\mathrm{(i)}$ The $\BigOs{1/k}$ convergence rate.}
	Applying Lemma~\ref{le:Asyn_lemma_A2}(i) to \eqref{eq:iFKM4CE_le_key_est} in Lemma~\ref{le:iFKM4CE_descent_property}, we have
	\begin{equation*}
		\arraycolsep=0.2em
		\begin{array}{lcl}
			\Expn{V_{K+1}} &\leq& \Expn{V_0} + \frac{\Lambda\eta}{2\tau\theta } \sum_{k=0}^{K}\delta_k, \vspace{1ex}\\
			\sum_{k=0}^K \frac{\eta(s - \gamma - 1)t_k + 4\beta s(s-1) }{4}\Expn{\norms{Gy^k}^2} &\leq& \Expn{V_0} + \frac{\Lambda\eta}{2\tau\theta } \sum_{k=0}^{K}\delta_k, \vspace{1ex}\\
			\sum_{k=0}^K \frac{\Lambda\eta (1 - \kappa)(s+1)t_{k-1}(t_{k-1}-s)}{4\tau s^2\theta }\Expn{\Delta_{k-1}} &\leq& \Expn{V_0} + \frac{\Lambda\eta}{2\tau\theta } \sum_{k=0}^{K}\delta_k, \vspace{1ex}\\
			\sum_{k=0}^K \frac{\beta t_{k-1}(t_{k-1} - s)}{2}  \Expn{\norms{Gy^k - Gy^{k-1}}^2} &\leq& \Expn{V_0} + \frac{\Lambda\eta}{2\tau\theta } \sum_{k=0}^{K}\delta_k, \vspace{1ex}\\
			\sum_{k=0}^K \frac{\bar{\beta}t_{k-1}(t_{k-1} - s)}{2}  \Expn{\Dc(y^k, y^{k-1})} &\leq& \Expn{V_0} + \frac{\Lambda\eta}{2\tau\theta } \sum_{k=0}^{K}\delta_k.
		\end{array}
	\end{equation*}
	Let us denote by $\Sc_K := \sum_{k=0}^K \delta_k$. 
	Since $z^0 = y^{-1} := y^0$, $\Delta_{-1} := 0$, $t_{-1} = 3s + \tau - 1$, and $\eta_{-1} = \frac{\eta t_{-1}}{2(t_{-1} - s)} = \frac{\eta (3s + \tau - 1)}{2(2s + \tau - 1)}$, $a_0 = \eta_{-1} t_{-1} (t_{-1} - s) = \frac{\eta (3s + \tau - 1)^2}{2}$, we have
	\begin{equation*}
	\arraycolsep=0.2em
	\begin{array}{lcl}
		\Expn{V_0} &=& V_0 = \Lc_0 = \frac{a_0}{2}\norms{Gy^0}^2 + \frac{s^2(s-1)}{2\eta_{-1}\gamma_{-1}}\norms{y^0 - x^{\star}}^2 \vspace{1ex} \\
		&=& \frac{\eta (3s + \tau - 1)^2}{4} \norms{Gy^0}^2 + \frac{s^2(s-1)(2s + \tau - 1)}{\eta\gamma (3s + \tau - 1)}\norms{y^0 - x^{\star}}^2 \vspace{1ex} \\
		&\stackrel{\tiny\textcircled{1}}{\leq}& \frac{\eta (3s + \tau - 1)^2}{4} \norms{Gy^0}^2 + \frac{s^3}{\eta \gamma}\norms{y^0 - x^{\star}}^2
		 := \frac{\Rc_0^2}{2},
	\end{array}
	\end{equation*}
	where {\tiny\textcircled{1}} holds due to 
	$\frac{s^2(s-1)(2s + \tau - 1)}{\eta\gamma (3s + \tau - 1)}
	\leq \frac{s^3}{\eta \gamma}$.
	Moreover, since $t_{k-1} = k + 3s + \tau - 1 \geq 3s$, \eqref{eq:iFKM4CE_iasyn_key_est2} implies that 
	$\norms{Gy^k}^2 \leq \frac{8\Lc_k}{\eta t_{k-1}^2} \leq \frac{8V_k}{\eta t_{k-1}^2}$.
	Using the last two expressions and $t_k = k + 3s + \tau$, we can easily prove the following bounds:
	\begin{equation*}
	\arraycolsep=0.2em
	\begin{array}{lcl}
		\Expn{\norms{Gy^K}^2} &\leq& \frac{4\theta \tau \Rc_0^2 + 4\Lambda\eta \Sc_{K-1}}{\eta\theta\tau (K + 3s + \tau - 1)^2}, \vspace{1ex}\\
		\sum_{k=0}^K (k + 3s + \tau)\Expn{\norms{Gy^k}^2} &\leq& \frac{2\theta\tau\Rc_0^2 + 2\Lambda\eta \Sc_K }{\theta\tau\eta(s - \gamma - 1)}, \vspace{1ex}\\
		\sum_{k=0}^K (k + 3s + \tau - 1)(k + 2s + \tau - 1) \Expn{\Delta_{k-1}} &\leq& \frac{2s^2 \left(\tau\theta \Rc_0^2 + \Lambda\eta \Sc_K \right)}{\Lambda\eta (1 - \kappa)(s+1)} , \vspace{1ex}\\
		\sum_{k=0}^K (k + 3s + \tau - 1)(k + 2s + \tau - 1)  \Expn{\norms{Gy^k - Gy^{k-1}}^2} &\leq& \frac{\tau\theta\Rc_0^2 + \Lambda\eta \Sc_K}{\beta \tau\theta}, \vspace{1ex}\\
		\sum_{k=0}^K (k + 3s + \tau - 1)(k + 2s + \tau - 1)  \Expn{\Dc(y^k, y^{k-1})} &\leq& \frac{\tau\theta \Rc_0^2 +  \Lambda\eta \Sc_K }{\bar{\beta} \tau\theta }.
	\end{array}
	\end{equation*}
	The first line of these results leads to \eqref{eq:iFKM4CE_th_BigO_rates}.

	In addition, if we assume that $\Sc_k \leq \Sc_{\infty} = \sum_{k=0}^\infty \delta_k < +\infty$, then by passing through limits as $K \to \infty$ in the last four inequalities, we obtain 
	\begin{equation}\label{eq:iFKM4CE_th_proof1.5}
	\arraycolsep=0.2em
	\hspace{-2ex}
	\begin{array}{lcl}
		\sum_{k=0}^{\infty} (k + 3s + \tau)\Expn{\norms{Gy^k}^2} &\leq& \frac{2 \tau\theta \Rc_0^2 +  2\Lambda\eta \Sc_{\infty} }{ \tau\theta \eta(s - \gamma - 1)} < +\infty, \vspace{1ex}\\
		\sum_{k=0}^{\infty} (k + 3s + \tau)(k + 2s + \tau) \Expn{\Delta_{k}} &\leq& \frac{2s^2\left(\theta \tau\Rc_0^2 + \Lambda\eta \Sc_{\infty} \right)}{\Lambda\eta (1 - \kappa)(s+1)} < +\infty , \vspace{1ex}\\
		\sum_{k=0}^{\infty} (k + 3s + \tau )(k + 2s + \tau)  \Expn{\norms{Gy^{k+1} - Gy^k}^2} &\leq& \frac{\tau\theta \Rc_0^2 + \Lambda\eta \Sc_{\infty} }{\beta \tau\theta} < +\infty, \vspace{1ex}\\
		\sum_{k=0}^{\infty} (k + 3s + \tau)(k + 2s + \tau)  \Expn{\Dc(y^{k+1}, y^k)} &\leq& \frac{ \tau\theta \Rc_0^2 +  \Lambda\eta \Sc_{\infty} }{\bar{\beta} \tau\theta } < +\infty.
	\end{array}
	\hspace{-1ex}
	\end{equation}
	
	\vspace{0ex}
	\noindent\textbf{$\mathrm{(ii)}$~Summability bounds.} We divide the proof of this part into three steps.
	
	\vspace{1ex}
	\noindent\textit{\underline{Step 1}. Prove the summability result for $\Expn{\norms{e^k}^2}$ and $\Expn{\norms{\widetilde{G}^k}^2}$.}
	First, taking the total expectation of the first line of \eqref{eq:iFKM_error_bound}, and using the third line of \eqref{eq:iFKM4CE_th_proof1.5}, we obtain
	\begin{equation}\label{eq:iFKM4CE_th_ek_sum}
		\arraycolsep=0.2em
		\begin{array}{lcl}
			\displaystyle \sum_{k=0}^\infty (k + 3s + \tau)(k + 2s + \tau) \Expn{\norms{e^k}^2} &<& +\infty.
		\end{array}
	\end{equation}
	Moreover, since $e^k := \widetilde{G}^k - Gy^k$, by Young's inequality, we have $\norms{\widetilde{G}^k}^2 \leq 2\norms{Gy^k}^2 + 2\norms{e^k}^2$.
	Next, using the last two relations and the second line of \eqref{eq:iFKM4CE_th_proof1.5}, we obtain
	\begin{equation}\label{eq:iFKM4CE_th_Gtilde_sum}
	\arraycolsep=0.2em
	\begin{array}{lcl}
		\displaystyle \sum_{k=0}^\infty (k + 3s + \tau) \Expn{\norms{\widetilde{G}^k}^2} &<&  +\infty.
	\end{array}
	\end{equation}
	
	\vspace{1ex}
	\noindent{\textit{\underline{Step 2}. Prove the summability result for $\Expn{\norms{x^{k+1} - x^k}^2}$.}}
	From \eqref{eq:iFKM4CE}, we can derive that
	\begin{equation*}
		\arraycolsep=0.2em
		\begin{array}{lcl}
			t_{k-1}(x^{k+1} - x^k + \eta_k \widetilde{G}^k) &=& s(z^k - x^k), \vspace{1ex}\\
			
			(t_{k-2} - s)(x^k - x^{k-1} + \eta_{k-1} \widetilde{G}^{k-1}) &=& s(z^{k-1} - x^k - \eta_{k-1} \widetilde{G}^{k-1}) \vspace{1ex}\\
			&=& s \big[z^k - x^k - (1 - \frac{\gamma}{s})\eta_{k-1} \widetilde{G}^{k-1} \big].
		\end{array}
	\end{equation*}
	Combining these two expressions, we obtain
	\begin{equation}\label{eq:iFKM4CE_th_proof2}
		\arraycolsep=0.2em
		\begin{array}{lcl}
			t_{k-1}(x^{k+1} - x^k + \eta_k \widetilde{G}^k) &=& (t_{k-2} - s)(x^k - x^{k-1} + \eta_{k-1} \widetilde{G}^{k-1}) + (s - \gamma)\eta_{k-1} \widetilde{G}^{k-1}.
		\end{array}
	\end{equation}
	Denoting $v^k := x^{k+1} - x^k + \eta_k \widetilde{G}^k$.
	Then, the last expression can be rewritten as
	\begin{equation*}
		\arraycolsep=0.2em
		\begin{array}{lcl}
			t_{k-1}v^k &=& (1 - \frac{s}{t_{k-2}})t_{k-2} v^{k-1} + \frac{s(s - \gamma)\eta_{k-1}t_{k-2}}{st_{k-2}} \widetilde{G}^{k-1}.
		\end{array}
	\end{equation*}
	By the convexity of $\norms{\cdot}^2$ and $\frac{s}{t_{k-2}} \in (0, 1]$, since $\eta_k = \frac{\eta t_k}{2(t_k - s)} \leq \eta$, we have
	\begin{equation}\label{eq:iFKM4CE_th_proof2.5}
		\arraycolsep=0.2em
		\begin{array}{lcl}
			t_{k-1}^2 \norms{v^k}^2 &\leq& (1 - \frac{s}{t_{k-2}}) t_{k-2}^2 \norms{v^{k-1}}^2 + \frac{(s - \gamma)^2 \eta_{k-1}^2 t_{k-2}}{s} \norms{\widetilde{G}^{k-1}}^2 \vspace{1ex}\\
			&\leq& t_{k-2}^2 \norms{v^{k-1}}^2 - s t_{k-2}\norms{v^{k-1}}^2 + \frac{(s - \gamma)^2 \eta^2}{s} t_{k-2} \norms{\widetilde{G}^{k-1}}^2.
		\end{array}
	\end{equation}
	Since $\sum_{k=0}^\infty t_{k-1} \Expn{\norms{\widetilde{G}^k}^2} < +\infty$ due to \eqref{eq:iFKM4CE_th_Gtilde_sum}, taking the total expectation of the last inequality and then applying Lemma~\ref{lem:tech_convergence1}, we can show that
	\begin{equation}\label{eq:iFKM4CE_th_proof3.0}
		\arraycolsep=0.2em
		\begin{array}{lcl}
			\lim_{k \to \infty} t_{k-1}^2 \Expn{\norms{v^k}^2} \text{ exists \quad and \quad } \sum_{k=0}^\infty t_{k-1} \Expn{\norms{v^k}^2} < +\infty.
		\end{array}
	\end{equation}
	Using Lemma~\ref{lem:tech_convergence2}, the last two relations imply that
	\begin{equation}\label{eq:iFKM4CE_th_proof3}
		\arraycolsep=0.2em
		\begin{array}{lcl}
			\lim_{k \to \infty} t_{k-1}^2 \Expn{\norms{v^k}^2} = 0.
		\end{array}
	\end{equation}
	Furthermore, from Young's inequality and $\eta_k \leq \eta$, we have 
	\begin{equation*}
		\arraycolsep=0.2em
		\begin{array}{lcl}
			\norms{x^{k+1} - x^k}^2 \leq 2 \norms{x^{k+1} - x^k + \eta_k \widetilde{G}^k}^2 + 2 \eta_k^2 \norms{\widetilde{G}^k}^2 \leq 2\norms{v^k}^2 + 2\eta^2 \norms{\widetilde{G}^k}^2.
		\end{array}
	\end{equation*}	
	Using this relation, \eqref{eq:iFKM4CE_th_Gtilde_sum}, and \eqref{eq:iFKM4CE_th_proof3.0}, we can prove that
	\begin{equation*}
		\arraycolsep=0.2em
		\begin{array}{lcl}
			\sum_{k=0}^\infty t_{k-1} \Expn{\norms{x^{k+1} - x^k}^2}  < +\infty.
		\end{array}
	\end{equation*}
	This proves the first line of \eqref{eq:iFKM4CE_th_summability_bounds} due to $t_{k-1} = k + 3s + \tau - 1 \geq k + \tau$.
	
	\vspace{1ex}
	\noindent{\textit{\underline{Step 3}. Prove the summability result for $\Expn{\norms{Gy^k}^2}$.}}
	From \eqref{eq:iFKM4CE_th_proof2}, we can show that
	\begin{equation*}
		\arraycolsep=0.2em
		\begin{array}{lcl}
			t_{k-1}[x^{k+1} - x^k + \eta_k (\widetilde{G}^k - Gy^k)] &=& (t_{k-2} - s)[x^k - x^{k-1} + \eta_{k-1} (\widetilde{G}^{k-1} - Gy^{k-1})] \vspace{1ex}\\
			&&  + {~} (s - \gamma)\eta_{k-1} \widetilde{G}^{k-1} - t_{k-1}\eta_k Gy^k + (t_{k-2} - s) \eta_{k-1} Gy^{k-1}.
		\end{array}
	\end{equation*}
	Let us define $w^k := x^{k+1} - x^k + \eta_k e^k$. 
	Then, the last expression can be rewritten as
	\begin{equation*}
		\arraycolsep=0.2em
		\begin{array}{lcl}
			t_{k-1}w^k  &=& (t_{k-2} - s)w^{k-1} +  (s - \gamma)\eta_{k-1} \widetilde{G}^{k-1} - t_{k-1}\eta_k Gy^k + (t_{k-2} - s) \eta_{k-1} Gy^{k-1},
		\end{array}
	\end{equation*}
	leading to
	\begin{equation*}
	\arraycolsep=0.2em
	\begin{array}{lcl}
		t_{k-1}[w^k + \eta_k (Gy^k - Gy^{k-1}) ]  &=& \frac{s}{t_{k-2}} \cdot \frac{t_{k-2}}{s} \left[ \big( (t_{k-2} - \gamma) \eta_{k-1} - t_{k-1} \eta_k \big) Gy^{k-1} + (s - \gamma) \eta_{k-1} e^{k-1} \right] \vspace{1ex}\\
		&& + {~} (1 - \frac{s}{t_{k-2}}) t_{k-2} w^{k-1}.
	\end{array}
	\end{equation*}	
	Using the convexity of $\norms{\cdot}^2$, $x^{k+1} - x^k = y^k - y^{k-1} - (\eta_k \widetilde{G}^k - \eta_{k-1}\widetilde{G}^{k-1})$ from the first line of \eqref{eq:iFKM4CE}, Young's inequality, Assumption~\ref{as:A1}(ii), and $\eta_k \leq \eta$, we can show that
	\begin{equation}\label{eq:iFKM4CE_th_proof5}
		\arraycolsep=0.2em
		\hspace{-2ex}
		\begin{array}{lcl}
			t_{k-1}^2 \norms{w^k}^2 &\leq& t_{k-2} (t_{k-2} - s)  \norms{w^{k-1}}^2
			- 2t_{k-1}^2\eta_k \iprods{Gy^k - Gy^{k-1}, x^{k+1} - x^k} \vspace{1ex}\\
			&& - {~} 2t_{k-1}^2\eta_k \iprods{Gy^k - Gy^{k-1}, e^k} - t_{k-1}^2\eta_k^2 \norms{Gy^k - Gy^{k-1}}^2
			 \vspace{1ex}\\
			&& + {~} \frac{t_{k-2}}{s} \norms{\big( (t_{k-2} - \gamma) \eta_{k-1} - t_{k-1} \eta_k \big) Gy^{k-1} + (s - \gamma) \eta_{k-1} e^{k-1}}^2 \vspace{1ex}\\
			&\leq& t_{k-2} (t_{k-2} - s) \norms{w^{k-1}}^2 - t_{k-1}^2\eta_k (\eta_k + 2\beta) \norms{Gy^k - Gy^{k-1}}^2 \vspace{1ex}\\
			&& - {~} 2\bar{\beta} t_{k-1}^2\eta_k \Dc(y^k, y^{k-1}) + 2\eta t_{k-1}^2 \norms{Gy^k - Gy^{k-1}}^2 + \eta t_{k-1}^2 \norms{e^k}^2 \vspace{1ex}\\
			&& + {~} t_{k-1}^2 \eta_k \norms{\eta_k \widetilde{G}^k - \eta_{k-1}\widetilde{G}^{k-1}}^2 + \frac{t_{k-2} [(t_{k-2} - \gamma) \eta_{k-1} - t_{k-1} \eta_k]^2}{s}  \norms{Gy^{k-1}}^2  \vspace{1ex}\\
			&& + {~} \frac{(s - \gamma)^2 \eta^2 t_{k-2}}{s} \norms{e^{k-1}}^2.
		\end{array} 
		\hspace{-4ex}
	\end{equation}
	Since $\eta_k \widetilde{G}^k - \eta_{k-1}\widetilde{G}^{k-1} 
	= \eta_k (Gy^k - Gy^{k-1}) + (\eta_k - \eta_{k-1})Gy^{k-1} + \eta_k e^k - \eta_{k-1}e^{k-1}$, 
	where $\eta_k - \eta_{k-1} = -\frac{s\eta}{2(t_k - s)(t_k - s - 1)}$, using Young's inequality, $\eta_k \leq \eta$, and $\frac{s^2 \eta^2 t_{k-1}^2 \eta_k}{4(t_k - s)^2(t_k - s - 1)^2} \leq \frac{s^2 \eta^3}{(t_{k-1} - s)^2}$, we can show that
	\begin{equation*}
	\arraycolsep=0.2em
	\begin{array}{lcl}
		t_{k-1}^2 \eta_k \norms{\eta_k \widetilde{G}^k - \eta_{k-1}\widetilde{G}^{k-1}}^2 
		&\leq& 4t_{k-1}^2 \eta_k^3 \norms{Gy^k - Gy^{k-1}}^2 + \frac{4s^2 \eta^2 t_{k-1}^2 \eta_k}{4(t_k - s)^2(t_k - s - 1)^2}\norms{Gy^{k-1}}^2 \vspace{1ex}\\
		&& + {~} 4t_{k-1}^2\eta_k^3 \norms{e^k}^2 + 4t_{k-1}^2 \eta_k \eta_{k-1}^2 \norms{e^{k-1}}^2 \vspace{1ex}\\
		&\leq& 4\eta^3 t_{k-1}^2 \norms{Gy^k - Gy^{k-1}}^2 + \frac{4s^2 \eta^3}{(t_{k-1} - s)^2} \norms{Gy^{k-1}}^2 \vspace{1ex}\\
		&& + {~} 4\eta^3 t_{k-1}^2 \norms{e^k}^2 + 4\eta^3 t_{k-1}^2 \norms{e^{k-1}}^2.
	\end{array}
	\end{equation*}
	Moreover, we can also evaluate 
	\begin{equation*}
		\arraycolsep=0.2em
		\begin{array}{lcl}
			\left|(t_{k-2} - \gamma) \eta_{k-1} - t_{k-1} \eta_k\right| &=& \frac{\eta}{2} \cdot \frac{t_k - 1}{t_k - s - 1} \cdot \frac{(\gamma + 1)t_k - s(\gamma + 2)}{t_k - s} \leq \frac{\eta}{2} \cdot \frac{3}{2} \cdot 2 = \frac{3\eta}{2}.
		\end{array}
	\end{equation*}
	Substituting the last two relations into \eqref{eq:iFKM4CE_th_proof5}, we obtain
	\begin{equation}\label{eq:iFKM4CE_th_proof6}
		\arraycolsep=0.2em
		\begin{array}{lcl}
			t_{k-1}^2 \norms{w^k}^2 
			&\leq& t_{k-2}^2 \norms{w^{k-1}}^2 - st_{k-2} \norms{w^{k-1}}^2 - t_{k-1}^2 \eta_k (\eta_k + 2\beta) \norms{Gy^k - Gy^{k-1}}^2 \vspace{1ex}\\
			&& - {~} 2\bar{\beta} t_{k-1}^2\eta_k \Dc(y^k, y^{k-1}) +  2\eta (1+2\eta^2) t_{k-1}^2 \norms{Gy^k - Gy^{k-1}}^2 \vspace{1ex}\\
			&& + {~}  \left[\frac{9\eta^2 t_{k-2}}{4s} + \frac{4s^2\eta^3}{(t_{k-1} - s)^2} \right]  \norms{Gy^{k-1}}^2  +  \eta(1+4\eta^2) t_{k-1}^2 \norms{e^k}^2 \vspace{1ex}\\
			&& + {~}  \left[\frac{(s - \gamma)^2 \eta^2 t_{k-2}}{s} + 4\eta^3 t_{k-1}^2 \right] \norms{e^{k-1}}^2.
		\end{array} 
	\end{equation}
	Since the last four terms of \eqref{eq:iFKM4CE_th_proof6} are summable due to \eqref{eq:iFKM4CE_th_BigO_rates} and \eqref{eq:iFKM4CE_th_ek_sum}, applying Lemma~\ref{lem:tech_convergence1} to \eqref{eq:iFKM4CE_th_proof6} after taking the total expectation, we can show that
	\begin{equation}\label{eq:iFKM4CE_th_proof7.1}
	\arraycolsep=0.2em
	\begin{array}{lcl}
		\lim_{k \to \infty} t_{k-1}^2 \Expn{\norms{w^k}^2} \text{ exists \quad and \quad } \sum_{k=0}^\infty t_{k-1} \Expn{\norms{w^k}^2} < +\infty.
	\end{array}
	\end{equation}	
	Using Lemma~\ref{lem:tech_convergence2}, the last two relations imply that
	\begin{equation}\label{eq:iFKM4CE_th_proof7.2}
		\arraycolsep=0.2em
		\begin{array}{lcl}
			\lim_{k \to \infty} t_{k-1}^2 \Expn{\norms{w^k}^2} = 0.
		\end{array}
	\end{equation}	
	Moreover, since $\eta_k \geq \frac{\eta}{2}$, by Young's inequality, we have
	\begin{equation}\label{eq:iFKM4CE_th_proof7.3}
		\arraycolsep=0.2em
		\begin{array}{lcl}
			\frac{\eta^2}{4} \norms{Gy^k}^2 &\leq& \eta_k^2 \norms{Gy^k}^2 \leq 2\norms{x^{k+1} - x^k + \eta_k \widetilde{G}^k}^2 + 2\norms{x^{k+1} - x^k + \eta_k (\widetilde{G}^k - Gy^k)}^2 \vspace{1ex}\\
			&=& 2\norms{v^k}^2 + 2\norms{w^k}^2.
		\end{array}
	\end{equation}	
	Using this fact, \eqref{eq:iFKM4CE_th_proof3.0}, and \eqref{eq:iFKM4CE_th_proof7.1}, we can prove the second line of \eqref{eq:iFKM4CE_th_summability_bounds}.
	
	\vspace{0.5ex}
	\noindent\textbf{$\mathrm{(iii)}$}~\textbf{The $\SmallOs{1/k}$ convergence rates.}
	By Young's inequality and $\eta_k \leq \eta$, we have
	\begin{equation}\label{eq:iFKM4CE_th_proof7.4}
	\arraycolsep=0.2em
	\begin{array}{lcl}
		\norms{x^{k+1} - x^k}^2 \leq 2\norms{x^{k+1} - x^k + \eta_k e^k}^2 + 2\eta_k^2 \norms{e^k}^2 \leq 2\norms{w^k}^2 + 2\eta^2 \norms{e^k}^2.
	\end{array}
	\end{equation}	
	Combining this fact, \eqref{eq:iFKM4CE_th_proof7.2}, and \eqref{eq:iFKM4CE_th_ek_sum}, we get the first line of \eqref{eq:iFKM4CE_th_smallo_rates}.
	Finally, using \eqref{eq:iFKM4CE_th_proof7.3}, \eqref{eq:iFKM4CE_th_proof3} and \eqref{eq:iFKM4CE_th_proof7.2}, we obtain the second line of \eqref{eq:iFKM4CE_th_smallo_rates}.
	
	\vspace{0.5ex}
	\noindent\textbf{$\mathrm{(iv)}$~Iteration-complexity.}
	If $\delta_k := \frac{\sigma^2}{(k+r)^{1+\nu}}$ for some $r, \nu > 0$ and a given $\sigma > 0$, we have $\Sc_k \leq \Sc_\infty = \sum_{k=0}^\infty \frac{\sigma^2}{(k+r)^{1+\nu}} = \Gamma\sigma^2 < +\infty$, where $\Gamma := \sum_{k=0}^\infty \frac{1}{(k+r)^{1+\nu}} < +\infty$.
	Then, from \eqref{eq:iFKM4CE_th_BigO_rates}, we have
	\begin{equation*}
		\arraycolsep=0.2em
		\begin{array}{lcl}
			\Expn{\norms{Gy^k}^2} &\leq& \frac{4}{\eta (k + 3s + \tau - 1)^2} \left(\Rc_0^2 + \frac{\Lambda\eta \Gamma \sigma^2}{\theta \tau}\right) \vspace{1ex}\\
			&=& \frac{2(3s + \tau - 1)^2 \norms{Gy^0}^2}{(k + 3s + \tau - 1)^2} 
			+ \frac{8s^3\norms{y^0 - x^{\star}}^2 }{\gamma\eta^2 (k + 3s + \tau - 1)^2}
			+ \frac{4\Lambda\Gamma \sigma^2}{ \theta  \tau (k + 3s + \tau - 1)^2}.
		\end{array}
	\end{equation*}
	Thus, for a given tolerance $\epsilon > 0$, to guarantee $\Expn{\norms{Gy^k}^2} \leq \epsilon^2$, we impose 
	\begin{equation}\label{eq:iFKM4CE_th_proof8}
		\arraycolsep=0.2em
		\hspace{-1ex}
		\begin{array}{lcl}
			\frac{4\Lambda\Gamma \sigma^2}{ \theta  \tau (k + 3s + \tau - 1)^2} \leq \frac{\epsilon^2}{3}, \qquad 
			\frac{2(3s + \tau - 1)^2 \norms{Gy^0}^2}{(k + 3s + \tau - 1)^2} \leq \frac{\epsilon^2}{3},  \qquad  \text{and}  \qquad  
			\frac{8s^3\norms{y^0 - x^{\star}}^2 }{\gamma\eta^2 (k + 3s + \tau - 1)^2} \leq \frac{\epsilon^2}{3}.
		\end{array}
		\hspace{-1ex}
	\end{equation}
	The first two conditions hold if 
	$k \geq \Big\lfloor \frac{2\sigma \sqrt{3\Lambda \Gamma}}{\epsilon \sqrt{\theta  \tau}} \Big\rfloor$ 
	and $k \geq \Big\lfloor \frac{\sqrt{6}(3s+\tau-1)\norms{Gy^0} }{\epsilon}  \Big\rfloor$, respectively.
	
	\noindent\textit{$\diamond$}
	If $0 < \bar{\beta} \leq \frac{7\Lambda \hat{\Theta} \beta}{7\Lambda \Theta + 3\theta (1+\tau)}$, then we choose $\eta := \frac{3\theta \bar{\beta}}{7 \hat{\Theta} \Lambda}$, and the third condition in \eqref{eq:iFKM4CE_th_proof8} holds if $k \geq \Big\lfloor \frac{14\sqrt{6} s^{3/2} \hat{\Theta} \Lambda \norms{y^0 - x^{\star}}}{3\theta  \bar{\beta} \gamma^{1/2} \epsilon }  \Big\rfloor$.
	Combining all the above results, we need at most
	\begin{equation*}
		\arraycolsep=0.2em
		\begin{array}{lcl}
			k &:=& \max\set{ \Big\lfloor \frac{2\sigma \sqrt{3\Lambda \Gamma}}{\epsilon \sqrt{\theta  \tau}} \Big\rfloor,
			\Big\lfloor \frac{\sqrt{6}(3s+\tau-1)\norms{Gy^0} }{\epsilon}  \Big\rfloor,
			\Big\lfloor \frac{14\sqrt{6} s^{3/2} \hat{\Theta} \Lambda \norms{y^0 - x^{\star}}}{3\theta  \bar{\beta} \gamma^{1/2} \epsilon }  \Big\rfloor
			} 
			= \BigO{\frac{\sigma}{\epsilon\sqrt{\tau}} + \frac{\tau}{\epsilon} + \frac{\hat{\Theta}}{\bar{\beta}\epsilon}}
		\end{array}
	\end{equation*}
	iterations to achieve $\Expn{\norms{Gy^k}^2} \leq \epsilon^2$.

	\noindent\textit{$\diamond$}
	If $\bar{\beta} = 0$ or $\frac{7\Lambda \hat{\Theta} \beta}{7\Lambda \Theta + 3\theta (1+\tau)} \leq \bar{\beta} \leq \frac{\hat{\Theta}}{\Theta}\beta$, then we choose $\eta := \frac{3\theta \beta}{7\Lambda \Theta + 3\theta (1+\tau)}$.
	In this case, if 
	$k \geq \Big\lfloor \frac{2\sqrt{6}s^{3/2} [7\Lambda \Theta + 3\theta (1+\tau)]\norms{y^0 - x^{\star}}}{3 \theta  \beta \gamma^{1/2} \epsilon }  \Big\rfloor$, then the third condition in \eqref{eq:iFKM4CE_th_proof8} holds.
	Combining all the above results, we need at most
	\begin{equation*}
		\arraycolsep=0.2em
		\begin{array}{lcl}
			k &:=& \max\set{ \Big\lfloor \frac{2\sigma \sqrt{3\Lambda \Gamma}}{\epsilon \sqrt{\theta  \tau}} \Big\rfloor,
			\Big\lfloor \frac{\sqrt{6}(3s+\tau-1)\norms{Gy^0} }{\epsilon}  \Big\rfloor,
			\Big\lfloor \frac{2\sqrt{6}s^{3/2} [7\Lambda \Theta + 3\theta (1+\tau)]\norms{y^0 - x^{\star}}}{3 \theta  \beta \gamma^{1/2} \epsilon }  \Big\rfloor
			} \vspace{1ex}\\
			&=& \BigO{\frac{\sigma}{\epsilon\sqrt{\tau}} + \frac{\tau}{\epsilon} + \frac{\Theta + \tau}{\beta\epsilon}}
		\end{array}
	\end{equation*}
	iterations to achieve $\Expn{\norms{Gy^k}^2} \leq \epsilon^2$ in expectation.
\Eproof	
\end{proof}

\begin{remark}\label{re:convergence_on_Gx}$($\textbf{Convergence of $\Exp{\norms{Gx^k}^2}$}$)$
Theorem~\ref{th:iFKM4CE_convergence_expectation} establishes convergence on $\Exp{\norms{Gy^k}^2}$.
This criterion is different from standard Nesterov's accelerated methods in convex optimization, where convergence is certified on the sequence $\sets{x^k}$ for the gradient norm.
It is similar to the Ravine method studied in \cite{attouch2022ravine}.
Nevertheless, by Young's inequality and Assumption~\ref{as:A1}, we have $\norms{Gx^k}^2 \leq 2\norms{Gx^k}^2 + 2\norms{Gy^k - Gx^k}^2 \leq 2\norms{Gx^k}^2 + \frac{2}{\beta^2}\norms{y^k - x^k}^2$.
Moreover, one can show that $\sum_{k=0}^{\infty}\Exp{\norms{y^k - x^k}^2 } < +\infty$.
As a result, it is possible to derive convergence rate results for $\Exp{\norms{Gx^k}^2}$ analogous to those for $\Exp{\norms{Gy^k}^2}$ in Theorem~\ref{th:iFKM4CE_convergence_expectation}.
For brevity, we omit the details of this proof.
\end{remark}

\begin{remark}\label{re:choice_of_params}$($\textbf{Choices of parameters}$)$
	We note that the parameter ranges in Theorem~\ref{th:iFKM4CE_convergence_expectation}, such as  $s \geq 1 + 3\gamma$ originated from Lemma~\ref{le:iFKM4CE_lower_bound_of_Lk}, are chosen to make the proof tractable and are not necessarily optimized.
	When implementing the algorithm, one may choose  values outside these strict ranges (e.g., we can choose $s > 1$) to achieve a better empirical performance, but we omit these heuristics here to avoid complicating the theoretical analysis.
\end{remark}
 
\begin{remark}\label{re:dependence_of_tau}$($\textbf{Linear dependence of the iteration-complexity on $\tau$}$)$.
Our iteration-complexity depends \textbf{linearly} on the maximum delay $\tau$, and either $\hat{\Theta}$, or $\Theta + \tau$.
In Section~\ref{sec:app_of_iFKM}, where we specialize our framework for concrete settings, we establish that either $\Theta = \BigOs{\tau}$ or $\hat{\Theta} = \BigOs{\tau}$.
Consequently, our iteration-complexity can be simplified to either
\vspace{-0.5ex}
\begin{equation*}
	\BigO{\frac{\sigma}{\epsilon \sqrt{\tau}} + \frac{\tau}{\beta \epsilon}} \quad \text{or} \quad \BigO{\frac{\sigma}{\epsilon \sqrt{\tau}} + \frac{\tau}{\bar{\beta} \epsilon}}.
\vspace{-1ex}	
\end{equation*}
This demonstrates a \textbf{linear dependence} on the maximum delay $\tau$, which is known to be the optimal dependence in asynchronous and delayed methods for optimization \cite{arjevani2020tight}.
Since $\beta$ and $\bar{\beta}$ are co-coercivity constants, they are of the order $\BigOs{1/L}$, where $L$ denotes the Lipschitz constant of the operator $G$ (or, in particular, the smoothness constant of the objective function in optimization).
Therefore, our  complexity match the linear dependence on $\tau$ commonly found in optimization literature as well (e.g., \cite{arjevani2020tight,stich2020error} for SGD with delayed updates), which typically yields a complexity of $\widetilde{\mcal{O}}\left(\frac{\sigma^2}{\epsilon^2} + \frac{L\tau}{\epsilon}\right)$ for $L$-smooth functions.
\end{remark}

\noindent\textbf{$\mathrm{(d)}$~Main result 2: Almost sure convergence.}
Our second main result is the almost sure convergence of our framework \eqref{eq:iFKM4CE} under the general error approximation condition~\eqref{eq:iFKM_error_bound}.
\begin{theorem}\label{th:iFKM4CE_convergence_almost_sure}
	Under the same conditions and settings as in Theorem \ref{th:iFKM4CE_convergence_expectation}, if additionally $S_{\infty} := \sum_{k=0}^{\infty} \delta_k < +\infty$, then the following summability bounds hold almost surely:
	\begin{equation}\label{eq:iFKM4CE_as_summability}
		\arraycolsep=0.2em
		\begin{array}{lcl}
			\sum_{k=0}^\infty (k +  \tau) \norms{Gy^k}^2 < +\infty 
			\qquad \textrm{and} \qquad 
			\sum_{k=0}^\infty (k+ \tau) \norms{x^{k+1} - x^k}^2 < +\infty.
		\end{array}
	\end{equation}
	Moreover, we also have the following almost sure limits $($i.e., $\SmallOs{1/k^2}$-almost sure rates$)$:
	\begin{equation}\label{eq:iFKM4CE_as_limits}
		\arraycolsep=0.2em
		\begin{array}{lcl}
			\lim_{k\to\infty} (k + \tau)^2 \norms{Gy^k}^2 = 0 
			\qquad \textrm{and} \qquad 
			\lim_{k\to\infty} (k + \tau)^2 \norms{x^{k+1} - x^k}^2 = 0.
		\end{array}
	\end{equation}
	Finally, the iterate sequences $\sets{x^k}$, $\sets{y^k}$, and $\sets{z^k}$ converge almost surely to a $\zer{G}$-valued random variable $x^{\star}$, a solution to \eqref{eq:CE}.
\end{theorem}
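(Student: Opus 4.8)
The plan is to upgrade each expectation-level conclusion of Theorem~\ref{th:iFKM4CE_convergence_expectation} to its almost sure counterpart, and then to close with an Opial-type argument for the iterates. The backbone is the recursive inequality \eqref{eq:iFKM4CE_le_key_est} together with the condition $\sum_{l=0}^{\tau-1}\mu_{k+l}\leq\nu_k$ verified at the start of the proof of Theorem~\ref{th:iFKM4CE_convergence_expectation}. Feeding this into the almost-sure branch of Lemma~\ref{le:Asyn_lemma_A2} (a Robbins--Siegmund-type statement) and using $\sum_k\omega_k = \frac{\Lambda\eta}{2\tau\theta}\Sc_\infty < +\infty$, I would conclude that $\{V_k\}$ converges almost surely to a finite random variable $V_\infty$ and that $\sum_k U_k < +\infty$ almost surely. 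Expanding $U_k$ from \eqref{eq:iFKM4CE_le_Uk} then extracts the almost sure summability of $(k+\tau)\norms{Gy^k}^2$, of $(k+\tau)^2\norms{Gy^k-Gy^{k-1}}^2$, and of $(k+\tau)^2\Delta_{k-1}$.

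For the summability bounds in \eqref{eq:iFKM4CE_as_summability}, the cleanest route is Tonelli's theorem: the expectation bounds \eqref{eq:iFKM4CE_th_summability_bounds} and \eqref{eq:iFKM4CE_th_ek_sum} give $\Expn{\sum_k(k+\tau)\norms{Gy^k}^2}<+\infty$, $\Expn{\sum_k(k+\tau)\norms{x^{k+1}-x^k}^2}<+\infty$, and $\Expn{\sum_k(k+\tau)^2\norms{e^k}^2}<+\infty$, so each nonnegative series is finite almost surely. In particular $(k+\tau)^2\norms{e^k}^2\to 0$ almost surely, a fact I will reuse. For the $\SmallOs{1/k^2}$ almost sure rates \eqref{eq:iFKM4CE_as_limits}, note that the pathwise recursions \eqref{eq:iFKM4CE_th_proof2.5} for $v^k := x^{k+1}-x^k+\eta_k\widetilde{G}^k$ and \eqref{eq:iFKM4CE_th_proof6} for $w^k := x^{k+1}-x^k+\eta_k e^k$ hold realization-by-realization, and all their residual terms are almost surely summable by the previous step. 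Applying the deterministic versions of Lemma~\ref{lem:tech_convergence1} and Lemma~\ref{lem:tech_convergence2} pathwise on the full-measure event yields $\lim_k t_{k-1}^2\norms{v^k}^2 = 0$ and $\lim_k t_{k-1}^2\norms{w^k}^2 = 0$ almost surely; combining these with \eqref{eq:iFKM4CE_th_proof7.3}--\eqref{eq:iFKM4CE_th_proof7.4} and $(k+\tau)^2\norms{e^k}^2\to 0$ delivers both limits in \eqref{eq:iFKM4CE_as_limits}.

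Finally, for the almost sure convergence of $\{x^k\}$, $\{y^k\}$, $\{z^k\}$ I would run a pathwise Opial argument. Fix $x^\star\in\zer{G}$; since $V_k$ is built on $\norms{z^k-x^\star}^2$, I first show the auxiliary terms vanish: the bracketed quantities in \eqref{eq:iFKM4CE_le_Vk}, weighted by $t_{k-1}(t_{k-1}-s)\sim (k+\tau)^2$, are general terms of almost surely convergent series, so $V_k-\Lc_k\to 0$ and hence $\Lc_k\to V_\infty$ almost surely. Using the identity $s(z^k-x^k)=t_{k-1}v^k$ from \eqref{eq:iFKM4CE}, one gets $y^k-z^k = -\tfrac{t_{k-1}-s}{s}v^k$, so the cross term $s t_{k-1}\iprods{Gy^k, y^k-z^k} = -t_{k-1}(t_{k-1}-s)\iprods{Gy^k,v^k}$ is bounded by $[(k+\tau)\norms{Gy^k}]\,[(k+\tau)\norms{v^k}]\to 0$; together with $\tfrac{a_k}{2}\norms{Gy^k}^2\to 0$, this forces $\norms{z^k-x^\star}^2$ to converge almost surely. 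Intersecting over a countable dense subset of $\zer{G}$ (closed convex, since $G$ is monotone) and extending by continuity gives, on a single full-measure event, convergence of $\norms{z^k-x^\star}$ for every $x^\star\in\zer{G}$. For the cluster-point condition, $\norms{y^k-z^k}=\tfrac{t_{k-1}-s}{s}\norms{v^k}\to 0$ and $\norms{x^k-z^k}=\tfrac{t_{k-1}}{s}\norms{v^k}\to 0$, while continuity of $G$ and $Gy^k\to 0$ place every cluster point of $\{z^k\}$ in $\zer{G}$. Opial's lemma then yields a single limit $x^\star$ for $\{z^k\}$, and hence for $\{y^k\}$ and $\{x^k\}$.

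The hardest step, I expect, is this last one: converting convergence of the Lyapunov value into Fej\'er monotonicity of $\norms{z^k-x^\star}$. This requires carefully peeling the cross term and the auxiliary terms out of $V_k$ and verifying they all vanish at the right rate, and it requires the measure-theoretic care of handling ``for all $x^\star$'' on a single almost sure event; the summability and rate parts, by contrast, are largely routine via Tonelli and pathwise application of the technical lemmas.
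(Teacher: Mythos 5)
Your proposal is correct, and its architecture coincides with the paper's: apply the almost-sure branch of Lemma~\ref{le:Asyn_lemma_A2} to \eqref{eq:iFKM4CE_le_key_est} to get a.s.\ convergence of $V_k$ and a.s.\ summability of $U_k$; exploit the recursions \eqref{eq:iFKM4CE_th_proof2.5} and \eqref{eq:iFKM4CE_th_proof6} for $v^k$ and $w^k$ to get the $\SmallOs{1/k^2}$ limits through \eqref{eq:iFKM4CE_th_proof7.3}--\eqref{eq:iFKM4CE_th_proof7.4}; peel the vanishing auxiliary terms out of $V_k$ to obtain a.s.\ convergence of $\norms{z^k - x^{\star}}^2$; and close with an Opial argument. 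Three execution differences are worth recording. First, you obtain the summability bounds \eqref{eq:iFKM4CE_as_summability} and the control $\sum_{k}(k+\tau)^2\norms{e^k}^2 < +\infty$ by Tonelli's theorem applied to the expectation bounds \eqref{eq:iFKM4CE_th_summability_bounds} and \eqref{eq:iFKM4CE_th_ek_sum}; the paper instead re-derives these pathwise, and in particular must run a Robbins--Siegmund argument (Lemma~\ref{lem:supermartingale}) to pass from the a.s.\ summability of $t_k(t_k-s)\Delta_k$ to that of $t_k(t_k-s)\norms{e^k}^2$, because Definition~\ref{de:error_bound_cond} only controls the conditional expectation $\Expsn{k}{\norms{e^k}^2}$. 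Your Tonelli shortcut legitimately bypasses that supermartingale step, since the total-expectation bound \eqref{eq:iFKM4CE_th_ek_sum} is already available from Theorem~\ref{th:iFKM4CE_convergence_expectation}. Second, you apply Lemmas~\ref{lem:tech_convergence1} and \ref{lem:tech_convergence2} pathwise to the $v^k$/$w^k$ recursions, which is valid because those inequalities are purely algebraic and hold realization-by-realization; the paper instead takes conditional expectations and invokes Lemma~\ref{lem:supermartingale} again. Both routes are sound, yours being slightly more elementary, the paper's being more uniform in style. Third, for the iterate convergence you re-derive the stochastic Opial lemma by hand (countable dense subset of the closed convex set $\zer{G}$, plus a cluster-point argument applied to $\sets{z^k}$), whereas the paper simply invokes Lemma~\ref{lem:Opial_NE} applied to $\sets{y^k}$, which packages exactly this measure-theoretic argument; so the step you flag as hardest is in fact available off the shelf. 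The edge-case caveats implicit in your extraction from $U_k$ (a positive coefficient on $\norms{Gy^k-Gy^{k-1}}^2$ needs $\beta>0$, and on $\Delta_{k-1}$ needs $\kappa<1$) are harmless: the hypotheses $0<\eta\leq\bar{\eta}$ force $\beta>0$, and when $\kappa=1$ the corresponding term is absent from $V_k$ as well, so nothing downstream breaks; the paper's proof rests on the same facts.
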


\begin{proof} 
	We divide the proof of this theorem into three main steps as follows.
	
	\noindent\textbf{\textit{Step 1: Prove $\lim_{k\to \infty} t_k^2\norms{Gy^k}^2 = 0$ and $\lim_{k \to \infty} t_k^2 \norms{x^{k+1} - x^k}^2 = 0$ almost surely.}}
	Apply Lemma~\ref{le:Asyn_lemma_A2}(ii) to \eqref{eq:iFKM4CE_le_key_est}, we can show that almost surely,
	\begin{equation}\label{eq:th32_proof1}
	\arraycolsep=0.2em
	\begin{array}{lcl}
		\lim_{k \to \infty} V_k \quad \text{exists}, \vspace{1ex}\\
		\sum_{k=0}^\infty t_k \norms{Gy^k}^2 &<& +\infty, \vspace{1ex}\\
		\sum_{k=0}^\infty t_k(t_k - s) \Delta_k &<& +\infty, \vspace{1ex}\\
		\sum_{k=0}^\infty t_k(t_k - s) \norms{Gy^{k+1} - Gy^k}^2 &<& +\infty, \vspace{1ex}\\
		\sum_{k=0}^\infty t_k(t_k - s) \Dc(y^{k+1}, y^k) &<& +\infty.
	\end{array}
	\end{equation}	
	Since $t_k = k + 3s + \tau > k + \tau$ due to \eqref{eq:iFKM4CE_params}, the second line of \eqref{eq:th32_proof1} proves the first almost sure summability bound of \eqref{eq:iFKM4CE_as_summability}.
	
	Given the last two summability bounds, we can use Lemma~\ref{le:appendix_convergence_series} to show that 
	\begin{equation}\label{eq:th32_proof2}
	\arraycolsep=0.2em
	\begin{array}{lcl}
		\sum_{k=0}^\infty t_k(t_k - s) \sum_{l = [k - \tau_k + 1]_+}^k \norms{Gy^{l} - Gy^{l-1}}^2 &<& +\infty, \vspace{1ex}\\
		\sum_{k=0}^\infty t_k(t_k - s) \sum_{l = [k - \tau_k + 1]_+}^k \Dc(y^l, y^{l-1}) &<& +\infty.
	\end{array}
	\end{equation}	
	Now, from \eqref{eq:iFKM_error_bound}, we have
	\begin{equation*}
	\arraycolsep=0.2em
	\begin{array}{lcl}
		\Expsn{k}{\norms{e^k}^2} &\leq& \Expsn{k}{ \Delta_k}  \leq  (1-\kappa)\Delta_{k-1} + {{\!\!\!\!\!\!\!}\displaystyle\sum_{l=[k-\tau_k+1]_{+}}^k }{\!\!\!\!\!\!\!} \big[\Theta\norms{Gy^l - Gy^{l-1}}^2 + \hat{\Theta}\Dc(y^l, y^{l-1})\big]  + \frac{\delta_k}{t_k (t_k - s)}.
	\end{array}
	\end{equation*}		 
	Multiplying both sides of this inequality by $t_k (t_k - s)$, we can derive that
	\begin{equation*}
		\arraycolsep=0.2em
		\begin{array}{lcl}
			t_k (t_k - s)\Expsn{k}{\norms{e^k}^2} &\leq& t_{k-1} (t_{k-1} - s) \norms{e^{k-1}}^2 - t_{k-1} (t_{k-1} - s) \norms{e^{k-1}}^2 \vspace{1ex}\\
			&& + {~} t_k (t_k - s)\sum_{l=[k-\tau_k+1]_{+}}^k\big[\Theta\norms{Gy^l - Gy^{l-1}}^2 + \hat{\Theta}\Dc(y^l, y^{l-1})\big] \vspace{1ex}\\
			&& + {~} (1-\kappa)t_k (t_k - s)\Delta_{k-1} + \delta_k.
		\end{array}
	\end{equation*}	
	Using \eqref{eq:th32_proof1}, \eqref{eq:th32_proof2}, and $\sum_{k=0}^\infty \delta_k < +\infty$ from the assumption, the last two lines of this inequality are summable almost surely.
	Thus, applying Lemma~\ref{lem:supermartingale}, we conclude that 
	\begin{equation}\label{eq:th32_proof3}
		\arraycolsep=0.2em
		\begin{array}{lcl}
			&\sum_{k=0}^\infty t_k (t_k - s)\norms{e^k}^2 < +\infty \quad  \text{almost surely}, \vspace{1ex}\\
			&\lim_{k\to \infty}t_k (t_k - s)\norms{e^k}^2 = 0 \quad  \text{almost surely.}
		\end{array}
	\end{equation}
	
	Next, since $\norms{\widetilde{G}^k}^2 \leq 2\norms{Gy^k}^2 + 2\norms{e^k}^2$ by Young's inequality, using the first line of \eqref{eq:th32_proof1} and \eqref{eq:th32_proof3}, we can show that
	\begin{equation}\label{eq:th32_proof4}
		\arraycolsep=0.2em
		\begin{array}{lcl}
			\sum_{k=0}^\infty t_k\norms{\widetilde{G}^k}^2 < +\infty \quad \text{almost surely}.
		\end{array}
	\end{equation}
	For $v^k := x^{k+1} - x^k + \eta_k \widetilde{G}^k$ as in Theorem \ref{th:iFKM4CE_convergence_expectation}, taking the conditional expectation $\Expsk{k}{\cdot}$ of \eqref{eq:iFKM4CE_th_proof2.5}, we obtain
	\begin{equation*}
		\arraycolsep=0.2em
		\begin{array}{lcl}
			t_{k-1}^2 \Expsk{k}{\norms{v^k}^2} &\leq&  t_{k-2}^2 \norms{v^{k-1}}^2 - s t_{k-2}\norms{v^{k-1}}^2 + \frac{(s - \gamma)^2 \eta^2}{s} t_{k-2} \norms{\widetilde{G}^{k-1}}^2.
		\end{array}
	\end{equation*}
	Since $\sum_{k=0}^\infty t_{k-1} \norms{\widetilde{G}^k}^2 < +\infty$ almost surely by \eqref{eq:th32_proof4}, applying Lemma~\ref{lem:supermartingale} to the last inequality, and then using Lemma~\ref{lem:tech_convergence2}, we can show that
	\begin{equation}\label{eq:th32_proof5}
		\arraycolsep=0.2em
		\begin{array}{lcl}
			\sum_{k=0}^\infty t_k \norms{v^k}^2 < +\infty \quad \text{and} \quad \lim_{k \to \infty} t_k^2 \norms{v^k}^2 = 0 \quad \text{almost surely}.
		\end{array}
	\end{equation}
	Similarly, for $w^k := x^{k+1} - x^k + \eta_k e^k$ as in Theorem \ref{th:iFKM4CE_convergence_expectation}, using similar arguments but applying to \eqref{eq:iFKM4CE_th_proof6}, we can prove that
	\begin{equation}\label{eq:th32_proof6}
		\arraycolsep=0.2em
		\begin{array}{lcl}
			\sum_{k=0}^\infty t_k \norms{w^k}^2 < +\infty \quad \text{and} \quad \lim_{k \to \infty} t_k^2 \norms{w^k}^2 = 0 \quad \text{almost surely}.
		\end{array}
	\end{equation}
	Combining \eqref{eq:iFKM4CE_th_proof7.4}, \eqref{eq:th32_proof6}, and \eqref{eq:th32_proof3}, we can show that $\sum_{k=0}^\infty t_k \norms{x^{k+1} - x^k}^2 = 0$ almost surely.
	Since $t_k = k + 3s + \tau$ due to \eqref{eq:iFKM4CE_params}, the last expression proves the second almost-sure summability bound in \eqref{eq:iFKM4CE_as_summability}.
	
	Combining \eqref{eq:iFKM4CE_th_proof7.3}, the two limits in \eqref{eq:th32_proof5} and \eqref{eq:th32_proof6}, we can show that 
	\begin{equation*} 
		\arraycolsep=0.2em
		\begin{array}{lcl}
			\lim_{k \to \infty} t_k^2 \norms{Gy^k}^2 = 0 \quad \text{almost surely}.
		\end{array}
	\end{equation*}
	This is exactly the first result of \eqref{eq:iFKM4CE_as_limits}. 
	Similarly, combining \eqref{eq:iFKM4CE_th_proof7.4}, \eqref{eq:th32_proof6}, and \eqref{eq:th32_proof3}, we can show that $\lim_{k \to \infty} t_k^2  \norms{x^{k+1} - x^k}^2 = 0$ almost surely, which is the second result of \eqref{eq:iFKM4CE_as_limits}.
	
	\vspace{1ex}
	\noindent\textbf{\textit{Step 2: Prove the existence of $\lim_{k\to \infty} \norms{z^k - x^{\star}}^2$.}}
	From \eqref{eq:iFKM4CE}, we have $s(z^k - x^k) = t_{k-1}(x^{k+1} - x^k + \eta_k\widetilde{G}^k) = t_{k-1} v^k$.
	Since $\lim t_k^2 \norms{v^k}^2 = 0$ almost surely, we obtain that $\lim_{k \to \infty} \norms{z^k - x^k}^2 = 0$ almost surely. 
	Moreover, from the third line of \eqref{eq:iFKM4CE}, we can show that $y^k - z^k = (1 - \frac{s}{t_{k-1}})(x^k - z^k)$, thus
	\begin{equation*} 
		\arraycolsep=0.2em
		\begin{array}{lcl}
			\norms{y^k - z^k}^2 &=& (1 - \frac{s}{t_{k-1}})^2 \norms{x^k - z^k}^2 \leq \norms{x^k - z^k}^2 \to 0 \quad \text{as $k \to \infty$}, \quad \text{almost surely}.
		\end{array}
	\end{equation*}
	Using Cauchy-Schwarz inequality, the last limit and the first line of \eqref{eq:iFKM4CE_as_limits}, we can show that almost surely
	\begin{equation*} 
		\arraycolsep=0.2em
		\begin{array}{lcl}
			|st_{k-1}\iprods{Gy^k, y^k - z^k}|^2 \leq s^2 t_{k-1}^2 \norms{Gy^k}^2 \norms{y^k - z^k}^2 \to 0 \quad \text{as $k \to \infty$}.
		\end{array}
	\end{equation*}
	Hence, we get $\lim_{k\to\infty} s t_{k-1} \iprods{Gy^k, y^k - z^k} = 0$ almost surely.
	
	Now, from \eqref{eq:iFKM4CE_Lk_func} and \eqref{eq:iFKM4CE_le_Vk}, we can write
	\begin{equation}\label{eq:th32_proof9}
		\arraycolsep=0.2em
		\begin{array}{lcl}
			V_k &=& \frac{a_k}{2}\norms{Gy^k}^2 + s t_{k-1}\iprods{Gy^k, y^k - z^k} + \frac{s^2(s-1)}{2\gamma\eta_{k-1}}\norms{z^k - x^{\star}}^2 \vspace{1ex}\\
			&& + {~} \frac{(2\beta - \eta)t_{k-1}(t_{k-1}-s)}{2} \norms{Gy^k - Gy^{k-1}}^2 + \bar{\beta} t_{k-1} (t_{k-1}-s) \Dc(y^k, y^{k-1}) \vspace{1ex}\\
			&& + {~} \frac{\Lambda\eta\alpha(1-\kappa)}{2[1 - (1-\kappa)\alpha ]} t_{k-1}(t_{k-1}-s)\Delta_{k-1}.
		\end{array}
	\end{equation}
	Collecting all necessary almost sure limits we have proven so far, we have, almost surely,
	\begin{equation*}
		\arraycolsep=0.2em
		\begin{array}{lcl}
			\lim_{k \to \infty} V_k \quad \text{exists},\vspace{1ex}\\
			\lim_{k \to \infty} \frac{a_k}{2}\norms{Gy^k}^2 = \lim_{k \to \infty} t_k^2 \norms{Gy^k}^2 = 0 \quad \text{(due to $a_k = \BigOs{t_k^2}$)}, \vspace{1ex}\\
			\lim_{k \to \infty} s t_{k-1}\iprods{Gy^k, y^k - z^k} = 0, \vspace{1ex}\\
			
			\lim_{k \to \infty} t_{k-1}(t_{k-1}-s) \norms{Gy^k - Gy^{k-1}}^2 = 0, \vspace{1ex}\\
			
			\lim_{k \to \infty} t_{k-1} (t_{k-1}-s) \Dc(y^k, y^{k-1}) = 0, \vspace{1ex}\\
			\lim_{k \to \infty} t_{k-1}(t_{k-1}-s)\Delta_{k-1} = 0.
		\end{array}
	\end{equation*}
	Combining these limits and \eqref{eq:th32_proof9}, and noting that $\eta_{k-1} = \BigOs{1}$, we conclude that $\lim_{k \to \infty} \norms{z^k - x^{\star}}^2$ exists almost surely.
	Moreover, since $\lim_{k \to \infty} \norms{z^k - x^k}^2 = \lim_{k \to \infty} \norms{y^k - z^k}^2 = 0$ almost surely, by triangle inequality, we also obtain that $\lim_{k \to \infty} \norms{x^k - x^{\star}}^2$ and $\lim_{k \to \infty} \norms{y^k - x^{\star}}^2$ exist almost surely.
	
	\vspace{1ex}
	\noindent\textbf{\textit{Step 3: Almost sure convergence of the iterates.}}
	Since $\lim_{k \to \infty}\norms{y^k - x^{\star}}^2$ exists almost surely for any solution $x^{\star} \in \zer{G}$, $\lim_{k \to \infty} \norms{Gy^k} = 0$ almost surely by the first result of \eqref{eq:iFKM4CE_as_limits}, and $G$ is continuous, applying Lemma~\ref{lem:Opial_NE}, we conclude that $\sets{y^k}$ converges almost surely to a random variable $x^{\star} \in \zer{G}$.
	Finally, since $\lim_{k\to \infty} \norms{y^k - z^k} = \lim_{k \to \infty}\norms{z^k - x^k} = 0$ almost surely, using triangle inequality, we can also prove that $\sets{x^k}$ and $\sets{z^k}$  almost surely converge to $x^{\star} \in \zer{G}$.
\Eproof
\end{proof}

\noindent\textbf{Discussion.}
Our almost sure convergence results in Theorem~\ref{th:iFKM4CE_convergence_almost_sure} appear to be the first for asynchronous accelerated methods, even for convex optimization.
Here, we establish summability bounds, $\SmallOs{1/k^2}$ convergence rates, and the convergence of iterates.
Moreover, our framework covers a wide range of algorithms in different settings as discussed in Section~\ref{sec:app_of_iFKM}.

\begin{remark}\label{re:extension}[\textbf{Extension to generalized equations}]
We can apply our scheme \eqref{eq:iFKM4CE} to solve the following \textit{\textbf{generalized equation}} (known as inclusion):
\begin{equation}\label{eq:GE}
\textrm{Find $x^{\star} \in \R^p$ such that:}~ 0 \in Gx^{\star} + Tx^{\star},
\tag{GE}
\end{equation}
where $G$ is defined as in \eqref{eq:CE}, while $T: \R^p \rightrightarrows \R^p$ is a possibly multivalued mapping that is $\rho$-co-hypomonotone. 
We sketch the main steps of this application as follows. 
\begin{compactitem}[$\bullet$]
\item First, we equivalently reformulate \eqref{eq:GE} into \eqref{eq:CE} using either a forward-backward splitting (FBS) operator or a backward-forward splitting (BFS) operator as done in \cite{TranDinh2025a}.
\item Next, we develop a similar result as Corollary 1 in \cite{TranDinh2025a} to verify Assumption~\ref{as:A1}.
\item Then, we apply \eqref{eq:iFKM4CE} and its convergence theory to the resulting FBS or BFS equation.
\item Finally, we convert the convergence results of this equation to the original problem \eqref{eq:GE}.  
\end{compactitem}
More generally, our framework \eqref{eq:iFKM4CE} can be applied to solve any problem that can be reformulated into the fixed-point problem \eqref{eq:FP} of a nonexpansive operator $F$. 
\end{remark}
\beforesec
\section{Applications to AFP Methods with Delay and Asynchronicity}\label{sec:app_of_iFKM}
\aftersec
Section~\ref{sec:iFKM_framework} develops our delayed inexact \ref{eq:iFKM4CE} framework under the error approximation condition \eqref{eq:iFKM_error_bound} in Definition~\ref{de:error_bound_cond}.
Now, we  apply our framework \eqref{eq:iFKM4CE} to derive three different algorithms with delayed and asynchronous updates for solving \eqref{eq:CE}, or equivalently,  \eqref{eq:FP}.

\beforesubsec
\subsection{\textbf{\textit{Accelerated Fixed-Point Method with Delayed Oracles}}}\label{subsec:iFKM4CE_delay_universal}
\aftersubsec
In this setting of \eqref{eq:CE}, we do not assume any structure on $G$ and treat it as a universal operator satisfying Assumption~\ref{as:A1}(ii) with $\bar{\beta} = 0$ and $\beta > 0$, i.e., $G$ is $\beta$-co-coercive. 
This setting is equivalent to finding a fixed-point of a nonexpansive operator $F$ in \eqref{eq:FP}.

\beforesubsubsec
\subsubsection{The computing mechanism}\label{subsec:setting1_mechanism}
\aftersubsubsec
We consider a standard \textbf{server-worker system} outlined in Figure~\ref{fig:consistent_architecture} with one central \emph{server} and multiple \emph{workers}. 
The server maintains (i) the decision variables in a \texttt{V}-memory block and (ii) the most recent operator value in a \texttt{G}-memory block. 
All workers have read access to the shared data and can therefore evaluate the operator (though their speeds may differ).

\begin{figure}[hpt!]
\vspace{-0ex}
	\begin{center}
	\includegraphics[scale=0.7]{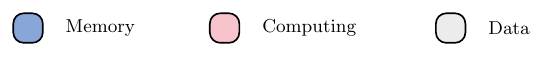}\\
	\includegraphics[scale=0.6]{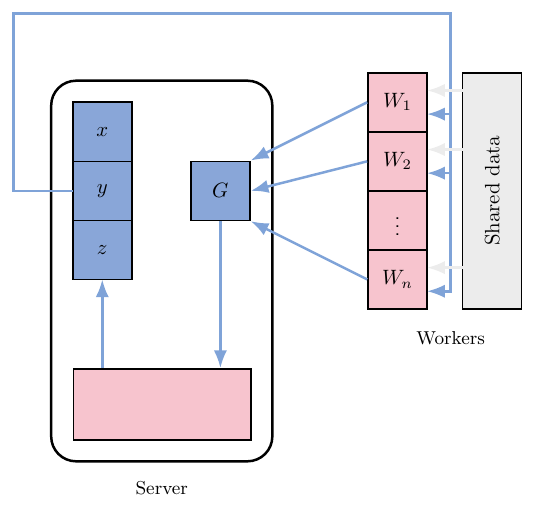} \hspace{1cm}
	\includegraphics[scale=0.6]{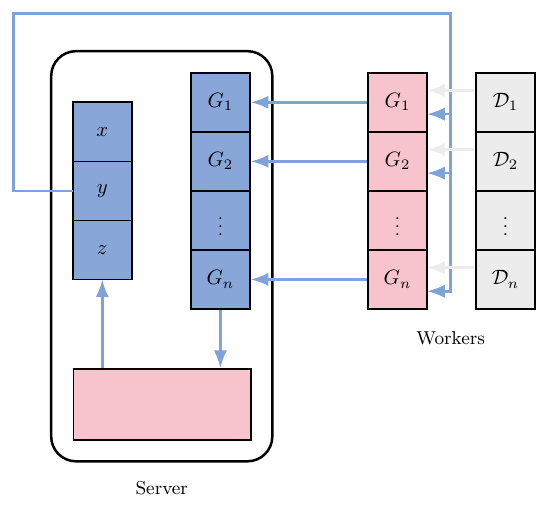}
	\vspace{-1ex}
	\caption{The server-worker architecture with: shared data for solving the universal setting of \eqref{eq:CE} (left) and distributed data for solving the finite-sum setting of \eqref{eq:CE} (right)}
	\label{fig:consistent_architecture}
	\end{center}
\vspace{-6ex}	
\end{figure}

\noindent$\mathrm{(a)}$~\textbf{\textit{Protocol.}} 
The server initializes \texttt{V}-memory with $y^0$ and broadcasts the current iterate $y^k$ whenever updated. 
Upon receiving $y^k$, a worker computes $Gy^k$ using the shared data and, when finished, attempts to commit the result to \texttt{G}-memory. 
A successful commit triggers the server to form the next iterates $x^{k+1},y^{k+1},z^{k+1}$ and atomically write them to \texttt{V}-memory.

\vspace{0.5ex}
\noindent$\mathrm{(b)}$~\textbf{\textit{Atomic assumption.}} All reads/writes to \texttt{G}- and \texttt{V}-memory are \emph{atomic}: a memory region being written is locked (mutual exclusion), so every reader observes either the old value or the fully written new value -- never a mixture (no torn reads).
In practice, such atomicity can be enforced using some technical or systems techniques, such as a memory lock or double-/dual-memory (double buffering) with an atomic swaps approach  \cite{peng2016arock}.

\vspace{0.5ex}
\noindent$\mathrm{(c)}$~\textbf{\textit{Staleness and estimator.}} 
Because workers are heterogeneous, the value present in the \texttt{G}-memory at the $k$-th server update (i.e., $k$-th iteration) may be a delayed evaluation $Gy^{k-\tau_k}$ with a delay $\tau_k \geq 0$. 
Hence, the estimator used by the server is
\begin{equation}\label{eq:delayed_oracle_of_G}
\arraycolsep=0.2em
\begin{array}{lcl}
	\widetilde{G}^k = Gy^{k-\tau_k},
\end{array}
\end{equation}
which is one of the previously computed operator values in $\sets{Gy^0, \cdots, Gy^k}$. 
This is called a \emph{consistent asynchrony} model: the operator value is one of the historical computed values.

\vspace{0.5ex}
\noindent$\mathrm{(d)}$~\textbf{\textit{Bounded-delay option.}}
To prevent excessively stale information, the server can enforce a threshold $\tau>0$ by discarding any worker result with $\tau_k>\tau$. 
This ensures a uniform bound on delays throughout the procedure, i.e., $0 \leq \tau_k \leq \tau$ for all $k\geq 0$. 

\beforesubsubsec
\subsubsection{Convergence of \ref{eq:iFKM4CE} with universal delayed oracle}\label{subsec:FKM_with_delayed_oracle}
\aftersubsubsec
Now, we specify Theorems~\ref{th:iFKM4CE_convergence_expectation} and \ref{th:iFKM4CE_convergence_almost_sure} to the setting in Subsection~\ref{subsec:setting1_mechanism} to obtain the following corollary to guarantee convergence of \eqref{eq:iFKM4CE} with delayed oracle \eqref{eq:delayed_oracle_of_G}.

\begin{corollary}\label{co:iFKM4CE_delayed_convergence_expectation}
	Suppose that Assumption~\ref{as:A1} holds for 	\eqref{eq:CE}.
	Let $\sets{(x^k, y^k, z^k)}$ be generated by \eqref{eq:iFKM4CE} using a delayed oracle $\widetilde{G}^k = Gy^{k-\tau_k}$ as in \eqref{eq:delayed_oracle_of_G} with uniformly bounded delays $0 \leq \tau_k \leq \tau$ for some $\tau \geq 1$, where we choose $s$ and $\eta$, and update $t_k$, $\gamma_k$, and $\eta_k$  as follows:
	\begin{equation*} 
		\arraycolsep=0.2em
		\hspace{-2ex}
		\begin{array}{lcl}
			s \geq 1 + 3\gamma, \quad  0 < \eta \leq \frac{3\beta}{3 + (7\Lambda+3)\tau}, \quad t_k := k + 3s + \tau,  \quad \gamma_k := \gamma \in [0,1],  \quad \eta_k := \frac{\eta t_k}{2(t_k - s)}.
		\end{array}
		\hspace{-2ex}
	\end{equation*}
	Then, the following statements hold.
	\begin{compactitem}
	\item[\textbf{$\mathrm{(i)}$~Convergence in expectation.}] 
	For $\Rc_0^2 := \frac{\eta (3s+\tau-1)^2}{2} \norms{Gy^0}^2 + \frac{2s^3}{\eta \gamma}\norms{y^0 - x^{\star}}^2$, we get
	\begin{equation*} 
		\arraycolsep=0.2em
		\begin{array}{lcl}
			\Expn{\norms{Gy^k}^2} &\leq& \displaystyle \frac{4\Rc_0^2}{\eta (k + 3s + \tau - 1)^2} \quad \text{and} \quad
			\lim_{k \to \infty} (k + \tau)^2 \Expn{\norms{Gy^k}^2} = 0.
		\end{array}
	\end{equation*}
	
	\item[\textbf{$\mathrm{(ii)}$~Almost sure convergence.}]
	The following results hold almost surely:
	\begin{equation*} 
			\sum_{k=0}^\infty (k+\tau) \norms{Gy^k}^2 < +\infty \qquad \text{and} \qquad \lim_{k \to \infty} (k+\tau)^2 \norms{Gy^k}^2 = 0.
	\end{equation*}
	Moreover, the sequences $\sets{x^k}$, $\sets{y^k}$, and $\sets{z^k}$ converge almost surely to a $\zer{G}$-valued random variable $x^{\star}$, a solution to \eqref{eq:CE}.
		
	\item[\textbf{$\mathrm{(iii)}$~Iteration-complexity.}] 
	For any given tolerance $\epsilon > 0$,  we require at most $\BigO{\frac{\tau}{\epsilon} + \frac{\tau}{\beta\epsilon}}$ iterations to achieve $\Expn{\norms{Gy^k}^2} \leq \epsilon^2$.
	\end{compactitem}
\end{corollary}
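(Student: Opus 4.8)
The plan is to verify that the consistent delayed oracle $\widetilde{G}^k = Gy^{k-\tau_k}$ fits the general error approximation condition of Definition~\ref{de:error_bound_cond} with explicit constants, and then invoke Theorems~\ref{th:iFKM4CE_convergence_expectation} and~\ref{th:iFKM4CE_convergence_almost_sure} essentially verbatim. Since this setting takes $\bar{\beta} = 0$ (so $G$ is merely $\beta$-co-coercive) and the oracle carries no randomness, every conditional expectation $\Expsn{k}{\cdot}$ is trivial and the ``in expectation'' and ``almost sure'' statements both reduce to deterministic ones; nonetheless I keep the expectation notation to line up with the theorems.

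The one substantive step is a telescoping bound on the error. Writing $e^k = \widetilde{G}^k - Gy^k = Gy^{k-\tau_k} - Gy^k = -\sum_{l=[k-\tau_k+1]_{+}}^k (Gy^l - Gy^{l-1})$, where the convention $y^0 = y^{-1} = \cdots = y^{\tau_0-1}$ kills the differences for $l \leq 0$, Cauchy--Schwarz (equivalently, convexity of $\norms{\cdot}^2$) gives $\norms{e^k}^2 \leq \tau_k \sum_{l=[k-\tau_k+1]_{+}}^k \norms{Gy^l - Gy^{l-1}}^2 \leq \tau \sum_{l=[k-\tau_k+1]_{+}}^k \norms{Gy^l - Gy^{l-1}}^2$. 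This suggests choosing $\Delta_k := \tau \sum_{l=[k-\tau_k+1]_{+}}^k \norms{Gy^l - Gy^{l-1}}^2$ together with $\kappa := 1$, $\Theta := \tau$, $\hat{\Theta} := 0$, and $\delta_k := 0$. With these, the first line of \eqref{eq:iFKM_error_bound} is exactly the displayed bound, and the second holds with equality, since $(1-\kappa)\Delta_{k-1} = 0$ and the surviving sum equals $\Delta_k$.

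It then remains to check that the hypotheses of the two theorems are satisfied and to read off the simplifications. The admissibility $\alpha < \kappa \leq 1$ holds because $\alpha = \frac{2s+1}{(s+1)^2} < 1 = \kappa$ for $s > 1$, and the compatibility condition $\Theta\bar{\beta} \leq \hat{\Theta}\beta$ is trivial as both sides vanish; the requirement $s \geq 1 + 3\gamma$ is passed through directly. Because $\kappa = 1$ we get $\theta = 1 - (1-\kappa)\alpha = 1$, so the step-size bound \eqref{eq:iFKM4CE_eta} falls into its $\bar{\beta} = 0$ branch and, using $\Theta = \tau$, simplifies to $\bar{\eta} = \frac{3\theta\beta}{7\Lambda\Theta + 3\theta(1+\tau)} = \frac{3\beta}{3 + (7\Lambda+3)\tau}$, which is precisely the range imposed in the corollary. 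With $\delta_k = 0$, hence $\Sc_K = \Sc_{\infty} = 0$, the $\BigO{1/k^2}$ bound \eqref{eq:iFKM4CE_th_BigO_rates} collapses to $\Expn{\norms{Gy^k}^2} \leq \frac{4\Rc_0^2}{\eta(k+3s+\tau-1)^2}$ and \eqref{eq:iFKM4CE_th_smallo_rates} yields the $\SmallO{1/k^2}$ limit, giving part (i). Since $\Sc_{\infty} < +\infty$, Theorem~\ref{th:iFKM4CE_convergence_almost_sure} directly supplies the almost sure summability, the almost sure $\SmallO{1/k^2}$ rate, and the almost sure convergence of $\sets{x^k}, \sets{y^k}, \sets{z^k}$ to a solution, which is part (ii). For part (iii), substituting $\sigma = 0$ and $\Theta = \tau$ into the second complexity bound of Theorem~\ref{th:iFKM4CE_convergence_expectation} gives $\BigO{\frac{\tau}{\epsilon} + \frac{\Theta+\tau}{\beta\epsilon}} = \BigO{\frac{\tau}{\epsilon} + \frac{\tau}{\beta\epsilon}}$.

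The only genuine content is the Cauchy--Schwarz telescoping and the observation that $\Delta_k$ can absorb it \emph{exactly} with $\kappa = 1$, $\hat{\Theta} = 0$, and $\delta_k = 0$; everything else is bookkeeping that specializes the general theorems. The mild technical point to handle carefully is the boundary behavior at early iterations, where the convention $y^0 = y^{-1} = \cdots = y^{\tau_0-1}$ forces $Gy^l - Gy^{l-1} = 0$ for $l \leq 0$ and thereby keeps both the telescoping identity and the error bound valid from $k = 0$ onward.
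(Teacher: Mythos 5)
Your proposal is correct and follows essentially the same route as the paper's proof: telescoping $e^k = Gy^{k-\tau_k} - Gy^k$, bounding $\norms{e^k}^2 \leq \tau \sum_{l}\norms{Gy^l - Gy^{l-1}}^2$ via convexity of $\norms{\cdot}^2$, verifying Definition~\ref{de:error_bound_cond} with $\kappa = 1$, $\Theta = \tau$, $\hat{\Theta} = 0$, $\delta_k = 0$, and then specializing Theorems~\ref{th:iFKM4CE_convergence_expectation} and~\ref{th:iFKM4CE_convergence_almost_sure}. Your write-up is in fact slightly more explicit than the paper's (naming $\Delta_k$, checking $\alpha < \kappa$, $\theta = 1$, and the boundary convention at early iterations), but the substance is identical.
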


\begin{proof}
Since our update is consistent as in \eqref{eq:delayed_oracle_of_G}, by \eqref{eq:oracle_error}, we have $e^k := \widetilde{G}^k - Gy^k = Gy^{k - \tau_k} - Gy^k$.
Using the convexity of $\norms{\cdot}^2$ and $0 \leq \tau_k \leq \tau$, we have 
\begin{equation*} 
	\arraycolsep=0.2em
	\begin{array}{lcl}
		\norms{e^k}^2 & = & \norms{Gy^{k-\tau_k} - Gy^k}^2 = \norms{\sum_{l = [k-\tau_k + 1]_{+}}^k (Gy^l - Gy^{l-1})}^2 \vspace{1ex}\\
		&=& \tau_k^2 \norms{\sum_{l = [k-\tau_k + 1]_{+}}^k \frac{1}{\tau_k}(Gy^l - Gy^{l-1})}^2 \vspace{1ex}\\
		& \leq & \tau_k \sum_{l = [k-\tau_k + 1]_{+}}^k \norms{Gy^l - Gy^{l-1}}^2 \vspace{1ex}\\
		& \leq & \tau \sum_{l = [k-\tau + 1]_{+}}^k\norms{Gy^{l} - Gy^{l-1} }^2.
	\end{array}
\end{equation*}
Using this inequality, one can easily verify that $\widetilde{G}^k = Gy^{k-\tau_k}$ satisfies Definition~\ref{de:error_bound_cond} with $\kappa = 1$, $\Theta = \tau$, $\hat{\Theta} = 0$, and $\delta_k = 0$.
Under this choice of $\widetilde{G}^k$, the stepsize $\eta$ in Theorem~\ref{th:iFKM4CE_convergence_expectation} reduces to $0 < \eta \leq \frac{3\beta}{7 \Lambda \tau + 3(1+\tau)} = \frac{3\beta}{3 + (7\Lambda + 3)\tau}$ as stated in Corollary~\ref{co:iFKM4CE_delayed_convergence_expectation}. 
Moreover, the statements in parts (i) and (ii) are direct consequences of Theorems~\ref{th:iFKM4CE_convergence_expectation} and \ref{th:iFKM4CE_convergence_almost_sure}, respectively.
Finally, for a given tolerance $\epsilon > 0$, the number of iterations $k$ to achieve $\Expn{\norms{Gy^k}^2} \leq \epsilon^2$ in Theorem~\ref{th:iFKM4CE_convergence_expectation} reduces to $\BigO{\frac{\tau}{\epsilon} + \frac{\tau}{\beta\epsilon}}$ as stated in Corollary~\ref{co:iFKM4CE_delayed_convergence_expectation}.
\Eproof
\end{proof}

\beforesubsec
\subsection{\textbf{\textit{Asynchronous Stochastic Accelerated Fixed-Point Method}}}\label{subsec:FKM_delay_stochastic}
\aftersubsec
In this subsection, we consider the setting when $G$ in \eqref{eq:CE} is inaccessible but can be queried only through an unbiased stochastic oracle $\mbf{G}(\cdot, \xi)$ with bounded variance defined on a given probability space.
More specifically, we impose the following additional assumption on $G$:
\begin{assumption}\label{as:unbiased_oracle}
	$G$ in \eqref{eq:CE} is equipped with an unbiased stochastic oracle $\mbf{G}(\cdot, \xi)$ such that
	\begin{equation}\label{eq:unbiased_oracle}
		Gx = \Expsn{\xi}{\mbf{G}(x, \xi)}, \quad \forall x \in \dom{G}.
	\end{equation}
	Moreover, there exists $\sigma \geq 0$ such that $\Expsn{\xi}{\norms{\mbf{G}(x, \xi) - Gx}^2} \leq \sigma^2$ for all $x \in \dom{G}$.
\end{assumption}
This assumption is standard in stochastic approximation methods, see, e.g., \cite{Nemirovski2009}, and it has been extended to different forms.
Here, we simply use a simple version as in \cite{Nemirovski2009}.

\beforesubsubsec
\subsubsection{The computing mechanism}\label{subsubsec:ASFKM_mechanism}
\aftersubsubsec
The scheme studied in this subsection is the same as \eqref{eq:iFKM4CE}, and the mechanism is very similar to the mechanism studied in the previous subsection.
However, unlike Subsection~\ref{subsec:iFKM4CE_delay_universal}, due to the inaccessibility to the full operator $G$, workers only return a stochastic estimator of $Gy^k$ for any $k \geq 0$ rather than the true value.
This leads to the choice $\widetilde{G}^k$ used in this variant of \eqref{eq:iFKM4CE} as an \textit{``delayed stochastic estimator''}, i.e., a stochastic estimator of an obsolete operator value $Gy^{k - \tau_k}$ for some $\tau_k \in \sets{0, 1, \cdots, \tau}$. 

\vspace{0.5ex}
\noindent\textbf{$\mathrm{(a)}$~\textit{Delayed stochastic estimator.}}
We use the same update scheme as in~\eqref{eq:iFKM4CE}, and the execution flow mirrors the previous subsection, Subsection~\ref{subsec:iFKM4CE_delay_universal}. 
The key difference is that workers cannot evaluate the full operator $G$ exactly; instead, upon receiving an iterate $y^{k-\tau_k}$, they return a \emph{stochastic estimation} of $Gy^{k-\tau_k}$. 
Consequently, at the $k$-th iteration, \eqref{eq:iFKM4CE} employs a \emph{delayed stochastic estimator} $\widetilde{G}^k$ of $Gy^{k-\tau_k}$.
Compared to Subsection~\ref{subsec:iFKM4CE_delay_universal}, we now have two sources of approximation errors that need to be addressed:
\begin{compactitem}[$\bullet$]
	\item \textit{Stochastic approximation error:} The error between the stochastic estimator $\widetilde{G}^k$ and $Gy^{k - \tau_k}$. 
	\item \textit{Asynchronous error:} The error comes from the obsolescence between $Gy^{k - \tau_k}$ and $Gy^k$.
\end{compactitem}
\vspace{0.5ex}
\noindent\textbf{$\mathrm{(b)}$~\textit{Approximation condition for $\widetilde{G}^k$.}}
Given an unbiased stochastic oracle $\mbf{G}(\cdot, \xi)$ of $G$, we assume that $\widetilde{G}^k$ is constructed by querying the oracles $\mbf{G}(y^{k-\tau_k}, \xi)$ of $Gy^{k - \tau_k}$ adapted to the filtration $\sets{\Fc_k}$ and there exists a nonnegative sequence $\sets{\hat{\delta}_k}$ such that
\begin{equation}\label{eq:error_bound}
	\arraycolsep=0.2em
	\begin{array}{lcl}
	\Expsn{\xi}{\norms{\widetilde{G}^k - Gy^{k - \tau_k}}^2} \leq \frac{\hat{\delta}_k}{t_k (t_k - s)},
	\end{array}
\end{equation}
where $t_k$ and $s$ are from \eqref{eq:iFKM4CE}.
One way to choose $\hat{\delta}_k$ is $\hat{\delta}_k := \frac{\sigma^2}{(k+r)^{1 +\omega}}$ for some $r > 0$, $\omega \geq 0$, and $\sigma$ given in Assumption~\ref{as:unbiased_oracle}.

\vspace{0.5ex}
\noindent\textbf{\textit{Example: Mini-batch estimator.}}
We can construct $\widetilde{G}^k$ using adaptive mini-batches with increasing batch size.
For example, given an i.i.d. mini-batch $\Bc_k$ of size $b_k$, we construct
\begin{equation}\label{eq:increasingmb_est}
	\arraycolsep=0.2em
	\begin{array}{lcl}
		\widetilde{G}^k := \frac{1}{b_k} \sum_{\xi_k \in \Bc_k} \mbf{G}(y^{k - \tau_k}, \xi_k).
	\end{array}
\end{equation}
By Jensen's inequality, it is well-known that $\Expsn{\Bc_k}{\norms{\widetilde{G}^k - Gy^{k-\tau_k}}^2} \leq \frac{\sigma^2}{b_k}$.
If we choose  $\hat{\delta}_k := \frac{\sigma^2}{(k+r)^{1 +\omega}}$, then to guarantee \eqref{eq:error_bound}, we need to impose $\frac{\sigma^2}{b_k} \leq \frac{\sigma^2}{(k+r)^{1+\omega} t_k (t_k - s)}$, leading to an increasing batch-size update $b_k \geq (k+r)^{1+\omega} t_k (t_k - s)$.
This mechanism is almost the same as the universal delayed oracle mechanism, except that the workers return a mini-batch estimator $\widetilde{G}^k := \frac{1}{b_k} \sum_{\xi_k \in \Bc_k} \mbf{G}(y^{k - \tau_k}, \xi_k)$ to the server, rather than the true value $Gy^{k - \tau_k}$, which is not accessible in this setting.

The condition \eqref{eq:error_bound} can hold for a wide range of  stochastic variance-reduced estimators, including unbiased and biased instances such as common estimators: SVRG \cite{SVRG}, SAGA \cite{Defazio2014}, SARAH \cite{nguyen2017sarah}, and Hybrid-SGD \cite{Tran-Dinh2019a}.
However, we omit their construction for brevity.
We only consider the mini-batch estimator \eqref{eq:increasingmb_est} as a simple instance of our class.

\beforesubsubsec
\subsubsection{Convergence of asynchronous stochastic AFP method}\label{subsubsec:ASFKM_convergence}
\aftersubsubsec
We specify Theorems~\ref{th:iFKM4CE_convergence_expectation} and \ref{th:iFKM4CE_convergence_almost_sure} to this asynchronous stochastic AFP variant and obtain the following convergence results.

\begin{corollary}\label{co:iFKM4CE_delay_stochastic_convergence_expectation}
	Suppose that Assumption~\ref{as:A1} holds with $\bar{\beta} = 0$ and Assumption~\ref{as:unbiased_oracle} also holds for \eqref{eq:CE}.
	Let $\sets{(x^k, y^k, z^k)}$ be generated by \eqref{eq:iFKM4CE} using a delayed stochastic estimator $\widetilde{G}^k$ satisfying \eqref{eq:error_bound} with uniformly bounded delays, i.e., there exists $\tau \geq 1$ such that $0 \leq \tau_k \leq \tau$ for all $k \geq 0$. 
	Suppose that we choose  $s$ and $\eta$, and update $t_k$, $\gamma_k$, and $\eta_k$ as follows:
	\begin{equation*} 
		\arraycolsep=0.2em
		\hspace{-2ex}
		\begin{array}{lcl}
			s \geq 1+3\gamma, \quad  0 < \eta \leq \frac{3\beta}{3 + (7\Lambda + 3)\tau}, \quad \gamma_k := \gamma \in [0,1], \quad  t_k := k + 3s + \tau,  \quad \eta_k := \frac{\eta t_k}{2(t_k - s)}.
		\end{array}
		\hspace{-2ex}
	\end{equation*}
	Suppose further that $\Sc_{\infty} := \sum_{k=0}^\infty \delta_k < +\infty$.
	Then, the following statements hold.
	\begin{compactitem}
	\item[\textbf{$\mathrm{(i)}$~Convergence in expectation.}] 
	For $\Rc_0^2 := \frac{\eta (3s+\tau-1)^2}{2} \norms{Gy^0}^2 + \frac{2s^3}{\eta \gamma}\norms{y^0 - x^{\star}}^2$, we have
	\begin{equation*} 
		\arraycolsep=0.2em
		\begin{array}{lcl}
			\displaystyle \Expn{\norms{Gy^k}^2} &\leq& \displaystyle \frac{4(\Rc_0^2 + \Lambda \eta \tau^{-1}\Sc_{\infty})}{\eta (k + 5s + \tau - 1)^2} \quad \text{and} \quad
			\lim_{k \to \infty} (k + \tau)^2 \Expn{\norms{Gy^k}^2} = 0.
		\end{array}
	\end{equation*}
	
	\item[\textbf{$\mathrm{(ii)}$~Almost sure convergence.}] 
	The following results hold almost surely:
	\begin{equation*} 
			\sum_{k=0}^\infty (k + \tau) \norms{Gy^k}^2 < +\infty \qquad \text{and} \qquad \lim_{k \to \infty} (k + \tau)^2 \norms{Gy^k}^2 = 0.
	\end{equation*}
	Moreover, the iterate sequences $\sets{x^k}$, $\sets{y^k}$, and $\sets{z^k}$ converge almost surely to a $\zer{G}$-valued random variable $x^{\star}$, a solution to \eqref{eq:CE}.
	
	\item[\textbf{$\mathrm{(iii)}$~Iteration-complexity.}]
	For a given $\epsilon > 0$,  we requires at most $k := \BigO{\frac{\sigma}{\epsilon\sqrt{\tau}} + \frac{\tau}{\epsilon} + \frac{\tau}{\beta\epsilon}}$ iterations to achieve $\Expn{\norms{Gy^k}^2} \leq \epsilon^2$.
	\end{compactitem}
\end{corollary}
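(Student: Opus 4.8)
The plan is to follow the template of the proof of Corollary~\ref{co:iFKM4CE_delayed_convergence_expectation}: the only genuine work is to verify that the delayed stochastic estimator $\widetilde{G}^k$ meets the error approximation condition of Definition~\ref{de:error_bound_cond} with explicit constants, after which parts (i), (ii), and (iii) follow as direct specializations of Theorems~\ref{th:iFKM4CE_convergence_expectation} and \ref{th:iFKM4CE_convergence_almost_sure}. The new feature relative to the deterministic delayed case is that the total error $e^k := \widetilde{G}^k - Gy^k$ now carries \emph{two} sources of inexactness, so I would first split it as $e^k = (\widetilde{G}^k - Gy^{k-\tau_k}) + (Gy^{k-\tau_k} - Gy^k) =: s^k + d^k$, where $s^k$ is the stochastic estimation error evaluated at the stale point and $d^k$ is the staleness (delay) error. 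This is exactly the two-error decomposition flagged in the description of the mechanism in Subsection~\ref{subsubsec:ASFKM_mechanism}.

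For the delay term $d^k$ I would reuse verbatim the convexity argument from Corollary~\ref{co:iFKM4CE_delayed_convergence_expectation}: since $0\le\tau_k\le\tau$, writing $Gy^{k-\tau_k}-Gy^k$ as a telescoping sum of $\tau_k$ consecutive increments and applying convexity of $\norms{\cdot}^2$ gives $\norms{d^k}^2 \le \tau\sum_{l=[k-\tau+1]_{+}}^k \norms{Gy^l - Gy^{l-1}}^2$, which is precisely the $\Theta$-term of \eqref{eq:iFKM_error_bound} with $\Theta=\tau$. For the stochastic term I would take the conditional expectation $\Expsn{k}{\cdot}$: since $\tau_k$, $y^{k-\tau_k}$, and hence $d^k$ are $\Fc_k$-measurable while, under Assumption~\ref{as:unbiased_oracle}, $\Expsn{k}{s^k}=0$, the cross term $\Expsn{k}{\iprods{s^k, d^k}}$ vanishes, leaving $\Expsn{k}{\norms{e^k}^2}=\Expsn{k}{\norms{s^k}^2}+\norms{d^k}^2$. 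Bounding $\Expsn{k}{\norms{s^k}^2}$ by the variance condition \eqref{eq:error_bound}, i.e. $\le \hat{\delta}_k/[t_k(t_k-s)]$, then lets me pick $\Delta_k$ so that Definition~\ref{de:error_bound_cond} holds with $\kappa=1$ (hence $\theta=1$), $\Theta=\tau$, $\hat{\Theta}=0$ (consistent with $\bar{\beta}=0$), and $\delta_k=\hat{\delta}_k$. It is the vanishing of the cross term that keeps $\Theta=\tau$ rather than $2\tau$, which is what reproduces the \emph{exact} stepsize $0<\eta\le \frac{3\beta}{3+(7\Lambda+3)\tau}$ in the statement, since that value is precisely $\bar{\eta}$ from \eqref{eq:iFKM4CE_eta} evaluated at $\theta=1$, $\Theta=\tau$ in the $\bar{\beta}=0$ branch.

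With Definition~\ref{de:error_bound_cond} verified, the three claims follow mechanically. Part (i) is Theorem~\ref{th:iFKM4CE_convergence_expectation}(i) and (iii) read off at $\theta=1$, using the hypothesis $\Sc_\infty=\sum_k\delta_k<+\infty$; part (ii) is Theorem~\ref{th:iFKM4CE_convergence_almost_sure} applied under the same constants; and part (iii) comes from the iteration-complexity in Theorem~\ref{th:iFKM4CE_convergence_expectation}(iv) (the $\bar{\beta}=0$ case), where substituting $\Theta=\tau$ turns $\BigO{\frac{\sigma}{\epsilon\sqrt{\tau}}+\frac{\tau}{\epsilon}+\frac{\Theta+\tau}{\beta\epsilon}}$ into $\BigO{\frac{\sigma}{\epsilon\sqrt{\tau}}+\frac{\tau}{\epsilon}+\frac{\tau}{\beta\epsilon}}$. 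I would close by instantiating $\delta_k=\hat{\delta}_k=\sigma^2/(k+r)^{1+\omega}$, as in the mini-batch example, to confirm $\Sc_\infty<+\infty$ and to align with the $\delta_k$ prescription used in Theorem~\ref{th:iFKM4CE_convergence_expectation}(iv).

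The main obstacle is the bookkeeping in the middle step rather than any deep estimate: one must justify that $\tau_k$, $y^{k-\tau_k}$, and $d^k$ are $\Fc_k$-measurable so they pass unchanged through $\Expsn{k}{\cdot}$, and that $s^k$ is conditionally mean-zero so the cross term drops — this is exactly what the filtration-adaptedness in \eqref{eq:error_bound} together with the unbiasedness in Assumption~\ref{as:unbiased_oracle} are there to supply. If instead a biased estimator were permitted, a Young's inequality would replace the identity and yield $\Theta=2\tau$ with $\delta_k=2\hat{\delta}_k$; this alters only absolute constants and leaves the $\BigO{\cdot}$ complexity of (iii) intact, but it would no longer match the precise stepsize written in the statement, which is why the unbiased structure is the natural setting here.
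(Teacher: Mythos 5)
Your proposal is correct and, in its overall architecture, matches the paper's proof: the same decomposition $e^k = \tilde{e}^k + \hat{e}^k$ into a stochastic error at the stale point and a staleness error, the same telescoping/convexity bound $\norms{\hat{e}^k}^2 \leq \tau\sum_{l=[k-\tau+1]_{+}}^k\norms{Gy^l - Gy^{l-1}}^2$, verification of Definition~\ref{de:error_bound_cond} with $\kappa=1$, $\hat{\Theta}=0$, and then a mechanical appeal to Theorems~\ref{th:iFKM4CE_convergence_expectation} and~\ref{th:iFKM4CE_convergence_almost_sure}. The one step where you genuinely diverge is how the two error pieces are recombined. The paper applies Young's inequality, $\Expsn{k}{\norms{e^k}^2} \leq 2\Expsn{k}{\norms{\tilde{e}^k}^2} + 2\Expsn{k}{\norms{\hat{e}^k}^2}$, which yields the constants $\delta_k = 2\hat{\delta}_k$ and a coefficient $2\tau$ on the increment sum, and works for \emph{any} estimator satisfying \eqref{eq:error_bound}, biased or not; you instead kill the cross term using conditional unbiasedness of $\widetilde{G}^k$ at the stale point together with $\Fc_k$-measurability of the delay, obtaining $\Theta=\tau$ and $\delta_k=\hat{\delta}_k$ with no factor of $2$. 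Each route buys something: yours requires the \emph{estimator} (not merely the oracle $\mbf{G}$) to be conditionally unbiased, which holds for the mini-batch instance \eqref{eq:increasingmb_est} but is not implied by \eqref{eq:error_bound} alone --- the paper explicitly intends that condition to also cover biased estimators such as SARAH or Hybrid-SGD, and for those your fallback (Young, $\Theta=2\tau$, $\delta_k=2\hat{\delta}_k$) is the argument that survives, at the cost of absolute constants only. Conversely --- and this is a point in your favor --- the paper's Young-based derivation actually produces the coefficient $2\tau$ on the sum yet then records $\Theta=\tau$ and keeps the stepsize threshold $\frac{3\beta}{3+(7\Lambda+3)\tau}$ corresponding to $\Theta=\tau$; this is a factor-of-two bookkeeping slip. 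Your unbiasedness argument is precisely what legitimizes $\Theta=\tau$, and hence the stepsize as literally stated in the corollary, in the unbiased regime, so there your proof is tighter and more internally consistent than the paper's own. Do make explicit, as you flag, that $\tau_k$ and hence $y^{k-\tau_k}$ and $d^k$ are $\Fc_k$-measurable (the delay is determined by the system history, not by the fresh sample $\xi$); without that, the cross term need not vanish and you would be forced back to the Young's-inequality route.
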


\begin{proof}
	We decompose the error term $e^k := \widetilde{G}^k - Gy^k$ from \eqref{eq:oracle_error} into $e^k = \tilde{e}^k + \hat{e}^k$, where $\tilde{e}^k := \widetilde{G}^k - Gy^{k - \tau_k}$ is the 	stochastic approximation error and $\hat{e}^k := Gy^{k - \tau_k} - Gy^k$ is the asynchronous error.
	Then, using the convexity of $\norms{\cdot}^2$ and $0 \leq \tau_k \leq \tau$, we can derive that
	\begin{equation*}
		\arraycolsep=0.2em
		\begin{array}{lcl}
			\norms{\hat{e}^k}^2 & = & \norms{Gy^{k-\tau_k} - Gy^k}^2  
			\leq  \tau \sum_{l = [k-\tau + 1]_{+}}^k\norms{Gy^{l} - Gy^{l-1} }^2.
		\end{array}
	\end{equation*}
	Since $e^k = \tilde{e}^k + \hat{e}^k$, applying Young's inequality, \eqref{eq:error_bound}, and the last relation, we have
	\begin{equation*}
		\arraycolsep=0.2em
		\begin{array}{lcl}
			\Expsn{k}{\norms{e^k}^2} & \leq & 2\Expsn{k}{\norms{\tilde{e}^k}^2} + 2\Expsn{k}{\norms{\hat{e}^k}^2} \leq  \frac{2\hat{\delta}_k}{t_k(t_k - s)} + 2\tau \sum_{l = [k-\tau + 1]_{+}}^k \norms{Gy^{l} - Gy^{l-1} }^2.
		\end{array}
	\end{equation*}
	This inequality shows that $\widetilde{G}^k$ constructed by the delayed stochastic estimator \eqref{eq:error_bound} satisfies Definition~\ref{de:error_bound_cond} with $\kappa = 1$, $\Theta = \tau$, $\hat{\Theta} = 0$, and $\delta_k = 2\hat{\delta}_k$.
	
	Using this estimator, the stepsize $\eta$ in Theorem~\ref{th:iFKM4CE_convergence_expectation} reduces to $0 < \eta \leq \frac{3\beta}{7 \Lambda \tau + 3(1+\tau)} = \frac{3\beta}{3 + (7\Lambda + 3)\tau}$ as stated in Corollary~\ref{co:iFKM4CE_delay_stochastic_convergence_expectation}. 
	Moreover, the statements in parts (i) and (ii) are direct consequences of Theorems~\ref{th:iFKM4CE_convergence_expectation} and \ref{th:iFKM4CE_convergence_almost_sure}, respectively.
	Finally, for a given tolerance $\epsilon > 0$, if we choose $\hat{\delta}_k := \frac{\sigma^2}{(k+r)^{1+\omega}}$, leading to $\delta_k := \frac{2\sigma^2}{(k+r)^{1+\omega}}$, the number of iterations to achieve $\Expn{\norms{Gy^k}^2} \leq \epsilon^2$ in Theorem~\ref{th:iFKM4CE_convergence_expectation} becomes $ k = \BigO{\frac{\sigma}{\epsilon\sqrt{\tau}} + \frac{\tau}{\epsilon} + \frac{\tau}{\beta\epsilon}}$ as stated in Corollary~\ref{co:iFKM4CE_delay_stochastic_convergence_expectation}.
\Eproof
\end{proof}

\beforesubsec
\subsection{\textbf{\textit{Finite-sum setting of \eqref{eq:CE}: Inconsistent Asynchronous AFP method}}}\label{subsec:iFKM4CE_iasyn}
\aftersubsec
Finally, we consider the finite-sum structure $Gx := \frac{1}{n} \sum_{i=1}^n G_ix$ of $G$ in \eqref{eq:CE} with $\Dc (x,y) := \frac{1}{n} \sum_{i=1}^n \norms{G_ix - G_iy}^2$. 
More specifically, we assume that the following assumption holds.

\begin{assumption}\label{as:finite_sum}
The operator $G$ in \eqref{eq:CE} satisfies the following conditions.
\begin{compactitem}
	\item[$\mathrm{(i)}$] 
	 $G$ has a finite-sum structure, i.e., $Gx = \frac{1}{n}\sum_{i=1}^n G_i x$ for $n \geq 1$.
	
	\item[$\mathrm{(ii)}$] 
	There exist $\beta, \bar{\beta} \in [0, +\infty)$ such that
	\begin{equation}\label{eq:G_cocoercivity2}
		\arraycolsep=0.2em
		\begin{array}{lcl}
			\iprods{Gx - Gy, x - y} & \geq & \beta \norms{Gx - Gy}^2 + \frac{\bar{\beta}}{n} \sum_{i=1}^n \norms{G_ix - G_iy}^2, \quad \forall x, y \in \dom{G}.
		\end{array}
	\end{equation}
\end{compactitem}
\end{assumption}
For (i), the finite-sum structure is ubiquitous in statistical learning and machine learning applications.
It is often obtained by sample average approximation.
For (ii), we can see that $G$ satisfying \eqref{eq:G_cocoercivity2} is $\beta$-co-coercive.
However, the reverse statement is not true in general.
Thus, this assumption is stronger than the $\beta$-co-coercivity of $G$. 
If $\beta = 0$ and $\bar{\beta} > 0$, then $G$ is said to be $\bar{\beta}$-average co-coercive.
This assumption has been previously used in variance-reduced fixed-point-based algorithms, including \cite{cai2023variance,tran2024accelerated,TranDinh2025a}, and is common in convex optimization.

\beforesubsubsec
\subsubsection{The inconsistently asynchronous oracle}
\aftersubsubsec
Before proceeding, let us recall the computing mechanism from Subsection~\ref{subsec:iFKM4CE_delay_universal}. 
When $G$ has a finite-sum structure, $\widetilde{G}^k$ used in Subsection~\ref{subsec:iFKM4CE_delay_universal} was \emph{consistently asynchronous} as
\begin{equation*}
\arraycolsep=0.2em
\begin{array}{lcl}
	\widetilde{G}^k &:=& Gy^{k - \tau_k} = \frac{1}{n}\sum_{i=1}^n G_i y^{k-\tau_k},
\end{array}
\end{equation*}
which requires all components $G_iy^{k-\tau_k}$ at the same delayed iterate $y^{k-\tau_k}$ and therefore coincides with an actual historical operator value $Gy^l$ for some $k-\tau+ 1 \leq l \leq k$.

In contrast to the above consistency, this subsection allows \emph{inconsistently asynchronous updates}, i.e., each component $G_i$ may be evaluated at a (possibly) different stale iterate within a bounded delay window. 
Specifically, each component $G_iy^{k-\tau_{i,k}}$ was evaluated at each delayed iterate $y^{k-\tau_{i,k}}$ such that $\tau_{i,k} \in \sets{0, 1, \cdots, \tau}$.
Then, we set
\begin{equation}\label{eq:inconsistent_update}
\arraycolsep=0.2em
\begin{array}{lcl}
\widetilde{G}^k & := & \frac{1}{n}\sum_{i=1}^n G_iy^{k-\tau_{i,k}}.
\end{array}
\end{equation}
In this case,  $\widetilde{G}^k$ is possibly not equal to $Gy^l$ for any single $l \in \sets{k-\tau + 1, \cdots, k} $, and hence, ``inconsistent''. 
Nevertheless, it is \emph{atomically consistent}, i.e.,  every term $G_iy^{k-\tau_{i,k}}$ is a historical value drawn from $\sets{G_iy^{k-\tau + 1}, \cdots, G_iy^k}$ for $i \in [n]$.
Hitherto, we have not specified any computing mechanism yielding the oracle \eqref{eq:inconsistent_update} yet, but we will give a detailed description of two representative mechanisms later: \textit{incremental and shuffling strategies}.

\beforesubsubsec
\subsubsection{Convergence of the inconsistently asynchronous AFP method}\label{subsubsec:iAFKM_for_finite_sum_convergence}
\aftersubsubsec
For given $\beta$ and $\bar{\beta}$ in Assumption~\ref{as:finite_sum}, $\tau \geq 1$, and $\Lambda := 1 + s - \gamma > 0$, let us define
\begin{equation}\label{eq:iFKM4CE_iasyn_eta}
\arraycolsep=0.2em
\bar{\eta} := \left\{ \begin{array}{ll}
	\frac{3\bar{\beta}}{7 \Lambda \tau} \quad &\text{if } 0 < \bar{\beta} \leq \frac{7 \Lambda \tau \beta}{3(1+\tau)}, \vspace{1ex}\\
	\frac{\beta}{1 + \tau} &\text{if } \bar{\beta} \geq \frac{7 \Lambda \tau \beta}{3(1+\tau)}.
\end{array}\right.
\end{equation}
Then, we have the following convergence results for this variant of \eqref{eq:iFKM4CE}.

\begin{corollary}\label{co:iFKM4CE_iasyn}
	Suppose that Assumptions~\ref{as:A1}$($a$)$ and \ref{as:finite_sum} holds for \eqref{eq:CE}.
	Let $\sets{(x^k, y^k, z^k)}$ be generated by \eqref{eq:iFKM4CE} using the inconsistent estimator $\widetilde{G}^k$ in \eqref{eq:inconsistent_update} such that the delays $0 \leq \tau_{i,k} \leq \tau$ for $\tau \geq 1$ and $i \in [n]$, where we choose $s$ and $\eta$, and update $t_k$, $\gamma_k$, and $\eta_k$ as 
	\begin{equation*}
				\arraycolsep=0.2em
		\hspace{-2ex}
		\begin{array}{lcl}
			s \geq 1+3\gamma, \quad  0 \leq \eta \leq \bar{\eta}, \quad \gamma_k := \gamma \in (0,1], \quad  t_k := k + 3s + \tau,  \quad \eta_k := \frac{\eta t_k}{2(t_k - s)}.
		\end{array}
		\hspace{-2ex}
		\end{equation*}
		Then, the following statements hold.
		\begin{compactitem}
		\item[\textbf{$\mathrm{(i)}$~Convergence in expectation.}]
		For $\Rc_0^2 := \frac{\eta (3s+\tau-1)^2}{2} \norms{Gy^0}^2 + \frac{2s^3}{\eta \gamma}\norms{y^0 - x^{\star}}^2$, we have
		\begin{equation*} 
		\arraycolsep=0.2em
		\begin{array}{lcl}
			\displaystyle \Expn{\norms{Gy^k}^2} &\leq& \displaystyle \frac{4\Rc_0^2}{\eta (k + 3s + \tau - 1)^2} \quad \text{and} \quad
			\lim_{k \to \infty} (k + \tau)^2 \Expn{\norms{Gy^k}^2} = 0.
		\end{array}
		\end{equation*}
		\item[\textbf{$\mathrm{(ii)}$~Almost sure convergence.}]
		The following results hold almost surely:
		\begin{equation*}
		\arraycolsep=0.2em
		\begin{array}{lcl}
			\displaystyle \sum_{k=0}^\infty (k + \tau)  \norms{Gy^k}^2 < + \infty \qquad \text{and} \qquad \lim_{k \to \infty} (k + \tau)^2 \norms{Gy^k}^2 = 0.
		\end{array}
		\end{equation*}
		Moreover, the sequences $\sets{x^k}$, $\sets{y^k}$, and $\sets{z^k}$ converge almost surely to a $\zer{G}$-valued random variable $x^{\star}$, a solution to \eqref{eq:CE}.
		
	\item[\textbf{$\mathrm{(iii)}$~Iteration-complexity.}] 
		For a given tolerance $\epsilon > 0$, the total number of iterations $k$ to achieve $\Expn{\norms{Gy^k}^2} \leq \epsilon^2$ is at most
	\begin{equation*}
		\arraycolsep=0.2em
		k := \left\{\begin{array}{ll}
			\BigO{\frac{\tau}{\epsilon} + \frac{\tau}{\bar{\beta}\epsilon}} \quad &\text{if } 0 < \bar{\beta} \leq \frac{7 \Lambda \tau \beta}{3(1+\tau)}, \vspace{1ex}\\
			
			\BigO{\frac{\tau}{\epsilon} + \frac{\tau}{\beta\epsilon}} \quad &\text{if $\bar{\beta} \geq \frac{7 \Lambda \tau \beta}{3(1+\tau)}$}.
		\end{array}\right.
	\end{equation*}
	\end{compactitem}
\end{corollary}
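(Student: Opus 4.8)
The plan is to follow the template of the proofs of Corollaries~\ref{co:iFKM4CE_delayed_convergence_expectation} and~\ref{co:iFKM4CE_delay_stochastic_convergence_expectation}: I first show that the inconsistent estimator \eqref{eq:inconsistent_update} satisfies the error approximation condition of Definition~\ref{de:error_bound_cond} with explicit constants, and then invoke Theorems~\ref{th:iFKM4CE_convergence_expectation} and~\ref{th:iFKM4CE_convergence_almost_sure} directly. To this end, by \eqref{eq:oracle_error} the oracle error is $e^k = \widetilde{G}^k - Gy^k = \frac{1}{n}\sum_{i=1}^n(G_iy^{k-\tau_{i,k}} - G_iy^k)$. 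Applying convexity of $\norms{\cdot}^2$ to the average over $i$, followed, for each component, by the telescoping identity $G_iy^{k-\tau_{i,k}} - G_iy^k = -\sum_{l=[k-\tau_{i,k}+1]_+}^k(G_iy^l - G_iy^{l-1})$, Cauchy--Schwarz (which contributes a factor $\tau_{i,k}$), and the bound $\tau_{i,k}\le\tau$ (extending the index range only adds nonnegative terms), I obtain the pathwise estimate
\[
\norms{e^k}^2 \le \frac{1}{n}\sum_{i=1}^n \tau\!\!\sum_{l=[k-\tau+1]_+}^k\!\!\norms{G_iy^l - G_iy^{l-1}}^2 = \tau\!\!\sum_{l=[k-\tau+1]_+}^k\!\! \Dc(y^l, y^{l-1}),
\]
where the final equality is exactly the definition $\Dc(x,y) = \frac{1}{n}\sum_{i=1}^n\norms{G_ix - G_iy}^2$. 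Hence Definition~\ref{de:error_bound_cond} holds with $\kappa = 1$, $\Theta = 0$, $\hat{\Theta} = \tau$, and $\delta_k = 0$ (taking $\Delta_k$ equal to the right-hand side above).

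With these constants $\theta := 1 - (1-\kappa)\alpha = 1$, the compatibility requirement $\Theta\bar{\beta} = 0 \le \tau\beta = \hat{\Theta}\beta$ of Theorem~\ref{th:iFKM4CE_convergence_expectation} holds, and $\alpha = \frac{2s+1}{(s+1)^2} < 1 = \kappa$ is automatic for $s > 1$. Substituting $\Theta = 0$, $\hat{\Theta} = \tau$, and $\theta = 1$ into the stepsize formula \eqref{eq:iFKM4CE_eta} --- and using the convention $\frac{c}{0} = +\infty$, so that the upper endpoint $\frac{\hat{\Theta}}{\Theta}\beta$ becomes $+\infty$ --- collapses $\bar{\eta}$ precisely to the two-branch expression in \eqref{eq:iFKM4CE_iasyn_eta}. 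Since $\delta_k = 0$ gives $\Sc_\infty = 0 < +\infty$, the summability hypotheses of both theorems are trivially met, so parts~(i) and~(ii) are immediate specializations (with $\theta = 1$ and $\Sc_K = 0$, the bound \eqref{eq:iFKM4CE_th_BigO_rates} reduces to $\Expn{\norms{Gy^k}^2} \le 4\Rc_0^2/[\eta(k+3s+\tau-1)^2]$). For part~(iii), taking $\sigma = 0$ in Theorem~\ref{th:iFKM4CE_convergence_expectation}(iv) removes the $\frac{\sigma}{\epsilon\sqrt{\tau}}$ term, and with $\Theta = 0$, $\hat{\Theta} = \tau$ the two complexity branches reduce to $\BigO{\frac{\tau}{\epsilon} + \frac{\tau}{\bar{\beta}\epsilon}}$ and $\BigO{\frac{\tau}{\epsilon} + \frac{\tau}{\beta\epsilon}}$, matching the stated bounds.

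The main obstacle --- indeed the only genuinely new point relative to the two preceding corollaries --- is handling the \emph{inconsistency} of the oracle. Because each component $G_i$ is evaluated at its own stale iterate $y^{k-\tau_{i,k}}$, the error cannot be bounded through $\norms{Gy^l - Gy^{l-1}}^2$ as in the consistent case; doing so would require all components to share a common iterate. The resolution is to keep the average over $i$ \emph{inside} the telescoped bound and recognize the resulting quantity $\frac{1}{n}\sum_{i=1}^n\norms{G_iy^l - G_iy^{l-1}}^2$ as exactly the $\Dc$-term of Assumption~\ref{as:finite_sum}. This is why the estimator populates the $\hat{\Theta}$-branch (rather than the $\Theta$-branch) of Definition~\ref{de:error_bound_cond}, and why the stronger average-co-coercivity \eqref{eq:G_cocoercivity2}, not plain $\beta$-co-coercivity, is the natural hypothesis for this setting.
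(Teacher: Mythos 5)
Your proposal is correct and follows essentially the same route as the paper's proof: bound $\norms{e^k}^2$ by convexity of $\norms{\cdot}^2$, telescoping, and Cauchy--Schwarz to get the factor $\tau$ times the $\Dc$-terms, conclude that Definition~\ref{de:error_bound_cond} holds with $\kappa = 1$, $\Theta = 0$, $\hat{\Theta} = \tau$, $\delta_k = 0$, and then specialize Theorems~\ref{th:iFKM4CE_convergence_expectation} and~\ref{th:iFKM4CE_convergence_almost_sure} (including the collapse of \eqref{eq:iFKM4CE_eta} to \eqref{eq:iFKM4CE_iasyn_eta}). The only cosmetic difference is that you establish the error bound pathwise whereas the paper carries it through conditional expectations via Jensen's inequality; the pathwise bound is, if anything, slightly stronger and immediately implies the conditional one.
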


\begin{proof}
	The source of randomness in inconsistent updates comes from which node has made an update at each iteration.
	Let $i_k$ be the node that sends an update to the server at the $k$-th iteration.
	Then, we can view $i_k$ as a random variable on $\sets{1, \cdots, n}$.
	Let $\Fc_k := \sigma(i_1, i_2, \cdots, i_{k-1})$ be the $\sigma$-field generated by all the randomness before the $k$-th iteration.
	In addition, let $\Expsn{k}{\cdot} = \Expn{\cdot \mid \Fc_k}$ be the conditional expectation conditioned on $\Fc_k$.
	
	From \eqref{eq:inconsistent_update}, by Jensen's inequality, the convexity of $\norms{\cdot}^2$, and $0 \leq \tau_{i,k} \leq \tau$, we get
	\begin{equation*} 
	\arraycolsep=0.2em
	\begin{array}{lcl}
	\Expsn{k}{\norms{e^k}^2} &= & \Expsn{k}{\norms{\widetilde{G}^k - Gy^k}^2}  =  \Expsn{k}{\norms{\frac{1}{n} \sum_{i=1}^n (G_i y^{k - \tau_{i,k}} - G_i y^k)}^2} \vspace{1ex}\\
	&\leq& \frac{1}{n}\sum_{i=1}^n \Expsn{k}{\norms{G_i y^{k - \tau_{i,k}} - G_i y^k}^2} \vspace{1ex}\\
	&=& \frac{1}{n}\sum_{i=1}^n \Expsn{k}{\norms{\sum_{l=k-\tau_{i,k} + 1}^k (G_i y^l - G_i y^{l-1})}^2} \vspace{1ex}\\
	&=& \frac{1}{n}\sum_{i=1}^n \Expsn{k}{\tau_{i,k}^2 \norms{\sum_{l=k-\tau_{i,k} + 1}^k \frac{1}{\tau_{i,k}} (G_i y^l - G_i y^{l-1})}^2} \vspace{1ex}\\
	&\leq& \frac{1}{n}\sum_{i=1}^n \Expsn{k}{\tau_{i,k} \sum_{l=k-\tau_{i,k} + 1}^k\norms{ G_i y^l - G_i y^{l-1}}^2} \vspace{1ex}\\
	&\leq& \frac{\tau}{n}\sum_{l=k-\tau + 1}^k \sum_{i=1}^n \Expsn{k}{\norms{ G_i y^l - G_i y^{l-1}}^2}.
	\end{array}
	\end{equation*}
	This expression shows that $\widetilde{G}^k$ computed by the inconsistent finite-sum update \eqref{eq:inconsistent_update} satisfies Definition~\ref{de:error_bound_cond} with $\kappa = 1$, $\Theta = 0$, $\hat{\Theta} = \tau$, and $\delta_k := 0$.

	Using this estimator $\widetilde{G}^k$, the stepsize $\eta$ in Theorem~\ref{th:iFKM4CE_convergence_expectation} reduces to $0 < \eta \leq \frac{3\bar{\beta}}{7 \Lambda \tau}$ if $0 < \bar{\beta} \leq \frac{7 \Lambda \tau \beta}{3(1+\tau)}$, and $0 \leq \eta \leq \frac{\beta}{1+\tau}$ if $\bar{\beta} \geq \frac{7 \Lambda \tau \beta}{3(1+\tau)}$ as stated in Corollary~\ref{co:iFKM4CE_iasyn}.  
	Moreover, the statements in Parts (i) and (ii) are direct consequences of Theorems~\ref{th:iFKM4CE_convergence_expectation} and \ref{th:iFKM4CE_convergence_almost_sure}, respectively.
	Finally, for a given tolerance $\epsilon > 0$, the total number of iterations to achieve $\Expn{\norms{Gy^k}^2} \leq \epsilon^2$ in Theorem~\ref{th:iFKM4CE_convergence_expectation} reduces to $\BigO{\frac{\tau}{\epsilon} + \frac{\tau}{\bar{\beta}\epsilon}}$ if $0 < \bar{\beta} \leq \frac{7 \Lambda \tau \beta}{3(1+\tau)}$, and $\BigO{\frac{\tau}{\epsilon} + \frac{\tau}{\beta\epsilon}}$ if $\bar{\beta} \geq \frac{7\Lambda \tau \beta}{3(1+\tau)}$ as stated in Corollary~\ref{co:iFKM4CE_iasyn}.
\Eproof
\end{proof}

\beforesubsubsec
\subsubsection{A representative mechanism in decentralized computing systems}\label{subsubsec:two_concrete_exams}
\aftersubsubsec
While the inconsistently asynchronous updates \eqref{eq:inconsistent_update} may cover a wide range of computing mechanisms, we specify here a representative one that is usually studied in literature.

\vspace{0.5ex}
\noindent\textbf{$\mathrm{(a)}$~\textit{General mechanism}.}
We again consider a server-worker system with one central server and $n$ single workers similar to Subsection~\ref{subsec:iFKM4CE_delay_universal}, but now each worker has their own data and cannot be shared with other workers, and the server has $n$ separate memory slots to maintain the most recent values of each component $G_i$ in the \texttt{G}-memory.
The $i$-th worker is now responsible for only computing $G_i$, $i = 1, \cdots, n$ (if it is not the case, then we can divide $G_i$ into groups such that each group is handled by one worker).
The server stores the most recent values of the components $G_i y^{k - \tau_{i,k}}$ and is responsible to updating the iterates $x^k$, $y^k$, and $z^k$ in the \texttt{V}-memory.
See Figure~\ref{fig:consistent_architecture} (right) for an illustration of the system architecture.

\vspace{0.5ex}
\noindent$\diamond$~\textbf{Initialization ($k = 0$) --- Synchronous step.}
At the initial iteration $k=0$, the server initializes an iterate $y^0$ in the \texttt{V}-memory and broadcasts this $y^0$ to all nodes.
\begin{compactitem}[$\bullet$]
	\item \textit{Worker $i$} ($i\in [n]$): It first evaluates $G_i y^0$, then sends $G_i y^0$ to the central server.
	\item \textit{Server}: Waits until gathering all values $G_i y^0$ from the $n$ nodes, then computes $Gy^0 := \frac{1}{n} \sum_{i=1}^n G_iy^0$ and sets $\widetilde{G}^0 := Gy^0$.
	Next, the server uses $\widetilde{G}^0$ to update the iterates $x^1$, $z^1$, $y^1$, and then broadcasts $y^1$ to all nodes as the initial iterate for the asynchronous phase. 
\end{compactitem}
\vspace{0.5ex}	
\noindent$\diamond$~\textbf{Main loop ($k \geq 1$) --- Asynchronous phase.} 
Let $\tau_{i,k}$ be the delay of the iterate at which $G_i$ is evaluated at the $k$-th iteration.
Let $I_k \subseteq [n]$ be the set of workers that make an update at the $k$-th iteration.
\begin{compactitem}[$\bullet$]
	\item \textit{Worker $i \in I_k$}: 
	Whenever a worker $i \in I_k$ finishes computing $G_{i} y^{k - \tau_{i, k}}$, it returns this operator value to the central server.
	\item \textit{Central server}: 
	Upon receiving $G_{i} y^{k - \tau_{{i},k}}$ from all workers $i \in I_k$, the central server updates the estimator $\widetilde{G}^k$ as:
	\begin{equation*}
		\begin{array}{lcl}
			\widetilde{G}^k := \widetilde{G}^{k-1} +  {\displaystyle\frac{1}{n} \sum_{i \in I_k}} \Big(\underbrace{G_{i} y^{k-\tau_{{i}, k}}}_{\substack{\text{add the newly}\\\text{updated values}}} - ~ \underbrace{G_{i} y^{(k-1)-\tau_{{i}, k-1}}}_{\substack{\text{discard values from}\\ \text{the last update}}} \Big).
		\end{array}
	\end{equation*}
	Simultaneously, we also replace the current value in the $G_i$ memory slot by $G_{i} y^{k - \tau_{i, k}}$ for all $i \in I_k$.
	Then, $\widetilde{G}^k$ is used in \eqref{eq:iFKM4CE} to update the iterates $x^{k+1}$, $z^{k+1}$, and $y^{k+1}$.
	After that, the server broadcasts this $y^{k+1}$ to all idle workers in the system.
\end{compactitem}
Note that we still assume the atomicity for all read-write processes in this mechanism, as well as the boundedness of the maximum delay, i.e., $0 \leq \tau_{i,k} \leq \tau$ for all $k\geq 0$ and $i \in [n]$.

\vspace{0.5ex}
\noindent\textbf{$\mathrm{(b)}$~\textit{Two concrete examples:  Incremental and Shuffling Aggregated.}}
Now, we specify two concrete mechanisms of the general decentralized computing system often studied in the literature: \emph{Incremental Aggregated} and \emph{Shuffling Aggregated}.
Both mechanisms maintain a component buffer $\hat{G}^k = [\hat{G}_1^k, \cdots, \hat{G}_n^k]$ and, at each iteration, update only \emph{one} component before forming the averaged estimator $\widetilde{G}^k := \frac{1}{n}\sum_{i=1}^n \hat G_i^k$ of $G$ (i.e., $|I_k| = 1$). 
These two mechanisms only differ in the \emph{order} in which indices in $\sets{1,\cdots, n}$ are visited.
\begin{compactitem}[$\bullet$]
	\item \textbf{Incremental (or cyclic) strategy.} Use a fixed order $\pi_0=[1,2, \cdots,n]$; at iteration $k$ pick $i=\pi_0((k \bmod n)+1)$, set $\hat G^k_i \leftarrow G_i y^k$, and leave the remaining $\hat G_j^k$ unchanged. 
	\item \textbf{Shuffling strategy.} Partition iterations into epochs of length $n$; at the start of each epoch ($k \bmod n=0$), draw a new permutation $\pi$ of $\{1,\cdots,n\}$ (deterministic or random), and during that epoch update components in the order prescribed by $\pi$. 
\end{compactitem}
Both schemes compute only a single $G_i$ per iteration while leveraging historical entries in $\hat{G}^k$, thus saving a huge amount of computation, especially when $n$ is large.
When applying these two mechanisms to \eqref{eq:iFKM4CE}, we consequently obtain Algorithm \ref{alg:shufflingFKM}.

\begin{algorithm}[hpt!]
	\caption{(Incremental/Shuffling Aggregated Accelerated Fixed-Point Method)}\label{alg:shufflingFKM}
	\begin{algorithmic}[1]
		\itemsep=-0.0em
		\State{\bfseries Input:} 
		An initial point $y^0 \in \Hspace$, and choose \texttt{method} $\in$ \sets{\texttt{incremental}, \texttt{shuffling}}.
		\State{\bfseries Initialization:} 
		Compute $\hat{G}^0 := [G_1 y^0, G_2 y^0, \cdots, G_n y^0]$.     
		\Statex If \texttt{method} = \texttt{incremental}, set $\tau := n$. If \texttt{method} = \texttt{shuffling}, set $\tau := 2n$.
		\Statex Set $z^0 := y^0$ and set $\pi_0 := [1, 2, \cdots, n]$.
		\State\hspace{0ex}{\bfseries For $k := 0,\cdots, k_{\max}$ do} 
		\State\hspace{2.5ex}{\bfseries If \texttt{method} $=$ \texttt{incremental} then} 
		\State\hspace{5ex}Set $\pi := \pi_0$;  
		\State\hspace{2.5ex}{\bfseries Else If \texttt{method} $=$ \texttt{shuffling} then}  
		\State\hspace{5ex}{\bfseries If $k \mod n = 0$ then}  
		\State\hspace{7.5ex}Sample a permutation $\pi$ of $\sets{1, 2, \cdots, n}$ (either deterministic or random)
		\State\hspace{5ex}{\bfseries End If} 
		\State\hspace{2.5ex}{\bfseries End If} 
		\State\hspace{2.5ex}Choose an active index $i := \pi ((k \mod n)+1)$   
		\State\hspace{2.5ex}Update the memory $\hat{G}^k_i \leftarrow G_i y^k$ and compute $\widetilde{G}^k = \frac{1}{n}\sum_{j=1}^n \hat{G}^k_j$ 
		\State\hspace{2.5ex}Update $t_k := k + 3s + \tau$, $\gamma_k := \gamma$, and $\eta_k := \frac{\eta t_k}{2(t_k - s)}$ 
		\State\hspace{2.5ex}Update 
		\begin{equation*}
		\arraycolsep=0.2em
		\left\{\begin{array}{lcl}
			x^{k+1} & :=  & y^k - \eta_k \widetilde{G}^{k} \vspace{1ex}\\
		  	z^{k+1} & :=  & z^k + \frac{\gamma_k}{s}({x}^{k+1} - {y}^{k}) \vspace{1ex}\\
 			y^{k+1} & :=  & \frac{s}{t_k}{z}^{k+1} + \frac{t_k - s}{t_k}{x}^{k+1}.
		\end{array}\right.
		\end{equation*}
		\State\hspace{0ex}{\bfseries End For}
	\end{algorithmic}
\end{algorithm}

Note that, the update of $x^{k+1}$ in both cases of Algorithm~\ref{alg:shufflingFKM} can be cast into the form 
\begin{equation*}
	\arraycolsep=0.2em
	\begin{array}{lcl}
		x^{k+1} & := & y^k - \eta_k \widetilde{G}^k,
	\end{array}
\end{equation*}
where the estimator $\widetilde{G}^k$ is constructed as follows.
\begin{compactitem}[$\diamond$]
	\item \textbf{Incremental aggregated strategy}: We form
	\begin{equation}\label{eq:incremental_update}
		\arraycolsep=0.2em
		\begin{array}{lcl}
			\widetilde{G}^k &:=& \frac{1}{n}\sum_{i=1}^n G_i y^{k - i},
		\end{array}
	\end{equation}	
	\item \textbf{Shuffling aggregated strategy}: We construct
	\begin{equation}\label{eq:shuffling_update}
		\arraycolsep=0.2em
		\begin{array}{lcl}
			\widetilde{G}^k &:=& \frac{1}{n}\sum_{i=1}^n G_{\pi(i)}y^{k - \pi(i)}.
		\end{array}
	\end{equation}
\end{compactitem}
Clearly, both \eqref{eq:incremental_update} and \eqref{eq:shuffling_update} show that the estimators $\widetilde{G}^k$ constructed by these two mechanisms are indeed special cases of the inconsistent updates \eqref{eq:inconsistent_update}.
Moreover, we also observe that in \eqref{eq:incremental_update}, the maximum delay $\tau$ is always $n$, while in  \eqref{eq:shuffling_update}, the maximum delay is at most $2n$.
Hence, we can apply Corollary~\ref{co:iFKM4CE_iasyn} to these concrete mechanisms with either $\tau := n$ or $\tau := 2n$ to obtain convergence results.
We omit stating this result here to avoid repetition. 

\beforesec
\section{Numerical Experiments}\label{sec:num_experiments}
\aftersec
In this section, we present two numerical examples to illustrate the proposed algorithms and compare them with baseline methods.
All the algorithms are implemented in Python and run on UNC’s Longleaf HPC cluster with NVIDIA A100 (40 GB) and L40 (48 GB) GPUs.

\beforesubsec
\subsection{\textbf{Bilinear matrix game: Policeman vs. Burglar problem}}\label{subsec:experiment1}
\aftersubsec
The goal of this example is to test different variants of our framework \eqref{eq:iFKM4CE} specified in Section~\ref{sec:app_of_iFKM}, including both deterministic and stochastic variants.

\vspace{0.5ex}
\noindent\textbf{$\mathrm{(a)}$~\textit{Mathematical model.}}
Consider an $m \times m$ urban grid comprising $m^2$ houses, where the $j$-th house possesses wealth $w_j$, $j = 1, \cdots, m^2$.
Each night, the Burglar selects a house $j$ to target, while the Policeman chooses a location proximate to house $k$ to station.
Upon initiation of the burglary, the Policeman is instantaneously informed of the incident's location, and the probability of apprehension is $\mbf{p}_c := \exp\sets{-\theta \mathrm{dist}(j,k)}$, where $\mathrm{dist}(j,k)$ denotes the distance between house $j$ and $k$.
However, neither the Burglar nor the Policeman is aware of the true wealth $w_j$; instead, they only have an estimate from a set of $n$ observations $\hat{w}_j^{(i)}$ for $i = 1, \cdots, n$.
The Burglar's objective is to maximize the expected reward $\mcal{R}_j := \frac{1}{n}\sum_{i=1}^n \hat{w}^{(i)}_{j}\bigl(1-\exp\sets{-\theta \mathrm{dist}(j,k)}\bigr)$,
whereas the Policeman seeks to minimize $\mcal{R}_j$.

The Policeman vs. Burglar problem outlined above admits a zero-sum game formulation.
This model was investigated in \cite{nemirovski2013mini}, and we introduce a minor modification to obtain a stochastic variant.
In line with \cite{nemirovski2013mini}, we specify the payoff matrix $\mbf{L}$ as follows:
\begin{equation*}
	\begin{array}{lcl}
		\mbf{L} := \frac{1}{n}\sum_{i=1}^n \mbf{L}^{(i)}, \quad \mbf{L}^{(i)}_{j,k} = {\hat{w}^{(i)}_{j}} \left(1 - \exp\{-\theta\, \mathrm{dist}(j, k)\}\right), \quad 1 \le j, k \le m^2.
	\end{array}
\end{equation*}
Then, we can formulate the above problem into the following classical bilinear matrix game: 
\begin{equation}\label{eq:bilinear_matrix_game}
	\min_{v \in \Delta_{p_1}}\max_{w \in \Delta_{p_2}}\Big\{ \Lc(v, w) := \iprods{\mbf{L}v, w} \Big\},
\end{equation}
where $v$ and $w$ represent mixed strategies of the Policeman and Burglar,  and $\Delta_{p_1}$ and $\Delta_{p_2}$ are the standard simplexes in $\R^{p_1}$ and $\R^{p_2}$, respectively with $p_1=p_2=m^2$.

Let us define $x := [v;w] \in \R^p$ with $p := p_1+p_2$, $G_i x := \mbf{G}_ix =  [\mbf{L}^{(i)\top}w; -\mbf{L}^{(i)}v]$ for $i \in [n]$, $Gx := \frac{1}{n}\sum_{i=1}^n G_ix$, and $Tx := [\partial \delta_{\Delta_{p_1}}(v); \partial \delta_{\Delta_{p_2}}(w)]$, where $\delta_{\Xc}(\cdot)$ is the indicator of $\Xc$.
Then, we can write the optimality condition of \eqref{eq:bilinear_matrix_game} as a generalized equation $0 \in Gx + Tx$.
It is well-known that this problem is equivalent to solving $\Gc_{\lambda} u = 0$, where $\Gc_{\lambda} u := G(J_{\lambda T}u) + \frac{1}{\lambda} (I - J_{\lambda T})u$ is the backward-forward splitting operator.
Then, $0 \in Gx^{\star} + Tx^{\star}$ if and only if $\Gc_{\lambda}u^\star = 0$ with $x^{\star} := J_{\lambda T} u^\star$.
The equation $\Gc_{\lambda}u^{\star} = 0$ is a special case of \eqref{eq:CE}.

\vspace{0.5ex}
\noindent\textbf{$\mathrm{(b)}$~\textit{Input data.}}
Following \cite{nemirovski2013mini}, we set $\mathrm{dist}(j,k) := |j-k|$ and fix $\theta := 0.8$.
We then draw the nominal wealth $w_j$ from a standard normal distribution and replace it by $|w_j|$ to enforce nonnegativity.
Noisy observations are generated as $w^{(i)}_j := |w_j + \epsilon^{(i)}_j|$, where  $\epsilon^{(i)}_j$ is a normal random variable with zero mean and variance $\sigma^2 = 0.05$.
Finally, we produce two sets of experiments, each comprising $5$ problem instances:
\begin{compactitem}[$\bullet$]
	\item \textbf{Experiment 1.} $m = 10$ and $n = 1000$, corresponding to $p_1=100$ and $p = 2p_1 = 200$.
	\item \textbf{Experiment 2.} $m = 15$ and $n = 2000$, corresponding to $p_1=225$ and $p = 2p_1 = 450$.
\end{compactitem}

\vspace{0.5ex}
\noindent\textbf{$\mathrm{(c)}$~\textit{Algorithms and parameters.}}
We implement five variants of our proposed method \eqref{eq:iFKM4CE} specified in Section~\ref{sec:app_of_iFKM} as follows.
\begin{compactitem}[$\diamond$]
	\item \texttt{AFP-D}: This is our \eqref{eq:iFKM4CE} using the delayed estimate \eqref{eq:delayed_oracle_of_G};
	\item \texttt{AFP-SD}: It is \eqref{eq:iFKM4CE} using  the stochastic mini-batch delayed estimate \eqref{eq:increasingmb_est};
	\item \texttt{AFP-IA}: It is \eqref{eq:iFKM4CE} using the incremental aggregated estimate \eqref{eq:incremental_update};
	\item \texttt{AFP-SA}: It is \eqref{eq:iFKM4CE} using the shuffling aggregated estimate \eqref{eq:shuffling_update}.
	\item \texttt{AFP-RA-$m$}: It is \eqref{eq:iFKM4CE} using a random aggregated estimator with $m$ components $\Gc_{\lambda,i}$ computed per iteration.
\end{compactitem}
For each variant, we set $\eta = \frac{1}{1+\tau}$ and $\eta = \frac{0.75}{1+\tau}$ in \textbf{Experiment 1} and \textbf{Experiment 2}, respectively, and choose $s=1.1$ and $\gamma = 1$ (other values still work).
The increasing mini-batch size for \texttt{AFP-SD} is chosen as $b_k = \max\sets{5, \min\sets{r(k+1)^3, n}}$, where $r>0$ is a small scale factor used to enforce that $b_k$ is increasing but not too fast as using $b_k = \BigOs{k^3}$.
The initial point is chosen as $x^0 := \big[ \frac{1}{p_1} \cdot \texttt{ones($p_1$)}; \frac{1}{p_2} \cdot \texttt{ones($p_2$)} \big]$ for all variants.

\vspace{0.5ex}
\noindent\textbf{$\mathrm{(d)}$~\textit{Numerical results.}}
First, we test the effect of the maximum delay $\tau$ on the performance of \eqref{eq:iFKM4CE}. 
To this end, we let $\tau$ vary from $\tau := 0$ (meaning no delays) to $\tau := 500$, and run the \texttt{AFP-D} variants with several values of $\tau$ in that range.
The results are presented in Figure~\ref{fig:matrix_games_tau_varies}, where \texttt{Number of full passes} is the number of times computing full $n$ components $\Gc_{\lambda,i}$ of the finite sum (which is also equal to the number of iterations since we treat the operator universally without assuming any specific  structure).

\begin{figure}[hpt!]
	\vspace{-3ex}
	\centering
	\includegraphics[width=\textwidth]{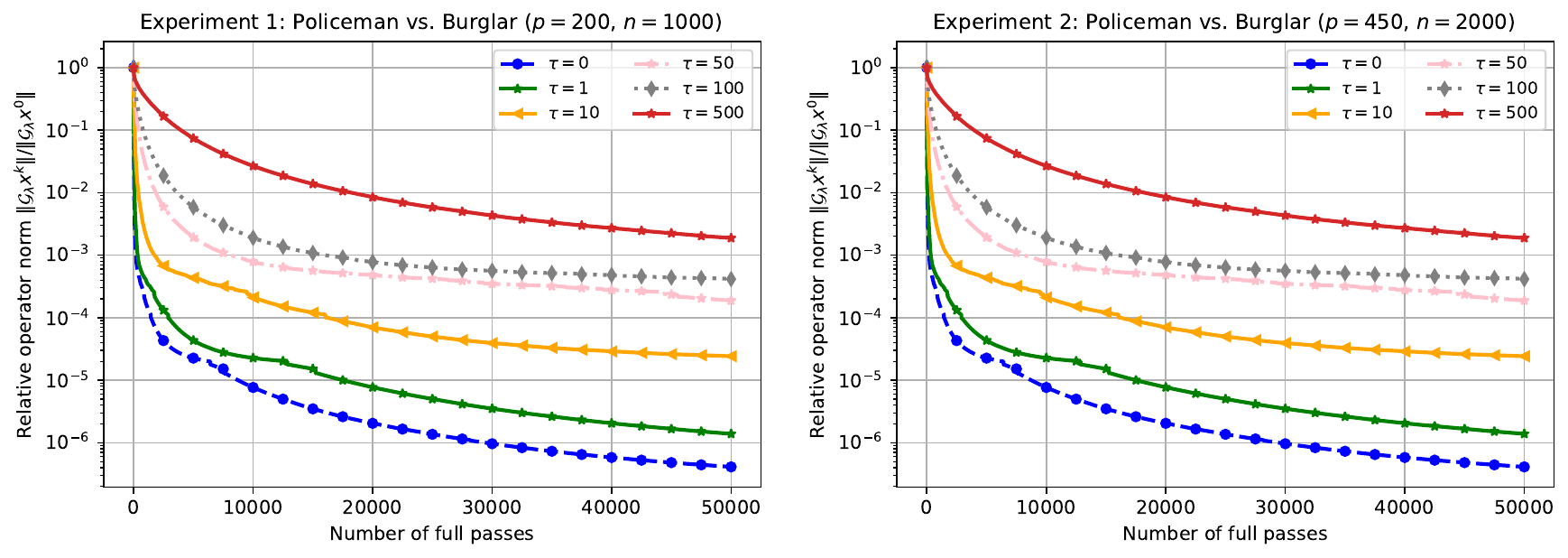}
	\vspace{-3ex}
	\caption{The performance (averaging over 5 problem instances) of \texttt{AFP-D} with different maximum delay values $\tau$ to solve \eqref{eq:bilinear_matrix_game} on 2 experiments.
	The numbers followed by the legends are the values of $\tau$ used in \texttt{AFP-D}.}
	\label{fig:matrix_games_tau_varies}
	\vspace{-3ex}
\end{figure}

\begin{figure}[hpt!]
	\vspace{-0ex}
	\centering
	\includegraphics[width=\textwidth]{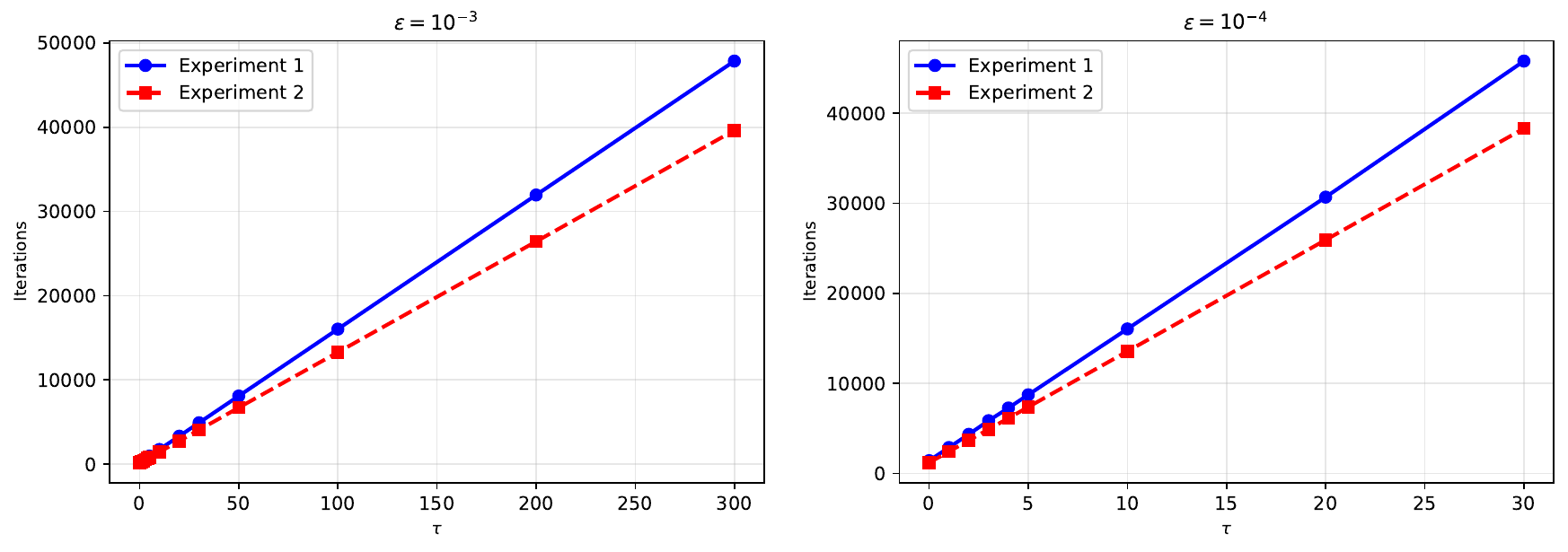}
	\vspace{-3ex}
	\caption{The number of iterations of \eqref{eq:iFKM4CE} (required to achieve $\norms{\Gc_{\lambda} x^k}/\norms{\Gc_{\lambda}x^0} \leq \epsilon$) vs. the maximum delay $\tau$.
	This plot verifies that the iteration-complexity depends \textbf{linearly} on $\tau$ as stated in Corollary~\ref{co:iFKM4CE_delayed_convergence_expectation}(iii).
	} 
	\label{fig:matrix_games_iter_vs_tau}
	\vspace{-2ex}
\end{figure}

We can see that the zero-latency variant ($\tau = 0$) provides a superior performance compared to all other variants, reaching the tolerance $10^{-6}$ after approximately $30{,}000$ iterations.
Moreover, as $\tau$ grows, performance degrades monotonically: larger $\tau$ yields slower decay of the relative operator norm.
This observation is consistent in both experiments and can be attributed to the dependence of the stepsize $\bar{\eta}$ as well as the upper bound of $\Expn{\norms{Gy^k}^2}$ on $\tau$ as proved in Theorem~\ref{th:iFKM4CE_convergence_expectation}.
Finally, the number of iterations that \eqref{eq:iFKM4CE} needs to achieve a given accuracy level $\epsilon$ is revealed in Figure~\ref{fig:matrix_games_iter_vs_tau}, which validates the theoretical finding about the \textit{\textbf{linear dependence}} of the \textit{iteration-complexity} on the maximum delay $\tau$.

Next, we test the effect of different strategies for constructing a delayed estimator $\widetilde{G}^k$ on the performance of \eqref{eq:iFKM4CE}.
In this test, we fix the maximum delay $\tau := 10$ for both \texttt{AFP-D} and \texttt{AFP-SD}.
Then, the numerical results are presented in Figure~\ref{fig:matrix_games_4variants}.
\begin{figure}[hpt!]
	\vspace{-0ex}
	\centering
	\includegraphics[width=\textwidth]{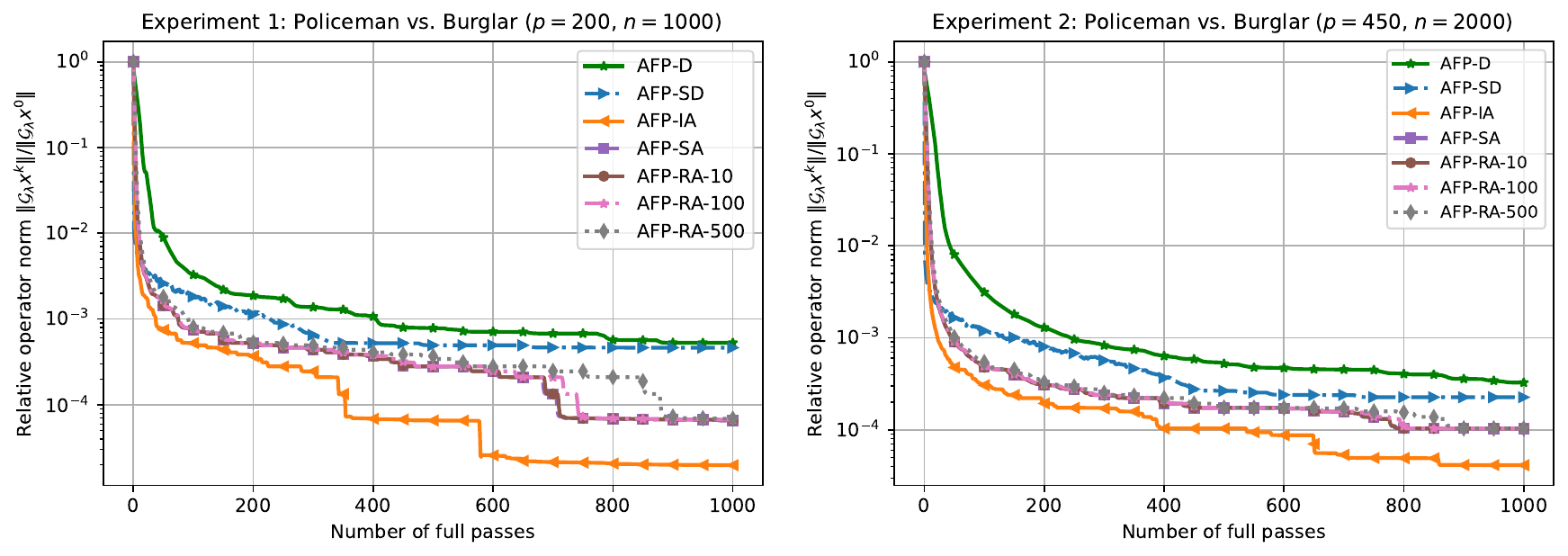}
	\vspace{-3ex}
	\caption{The performance (averaging over 5 problem instances) of 4 algorithms for solving \eqref{eq:bilinear_matrix_game} on 2 experiments.
	For \texttt{AFP-RA-$m$}, we run 3 cases with $m=10$, $m=100$, and $m=500$, respectively.
	}
	\label{fig:matrix_games_4variants}
	\vspace{-3ex}
\end{figure}

From Figure~\ref{fig:matrix_games_4variants}, we can see that \texttt{AFP-IA} outperforms the other variants when achieving the tolerance of magnitude $10^{-5}$ after nearly $400$ passes.
Among the remaining variants, \texttt{AFP-SA} and \text{AFP-RA-$m$} have a similar performance, as they all randomly evaluate some $G_i$'s to perform the iterate updates, while smaller values of $m$ yield slightly better performance due to their higher update frequency.
Finally, \texttt{AFP-SD} shows a slightly better performance than \texttt{AFP-D}, especially in the early phase when the mini-batch size is still below $n$.
In the later phase, when \texttt{AFP-SD} switches to use the full-batch operator, the two variants perform more similarly and ultimately reach the tolerance around $4\times10^{-4}$ after $1000$ passes.

\beforesubsec
\subsection{\textbf{Training neural networks via delayed inexact oracles}}\label{subsec:experiment2}
\aftersubsec
In this example, we evaluate different variants of our framework \eqref{eq:iFKM4CE} on a simple training task involving a shallow neural network. 
It is motivated by the following observations.
First, training large-scale machine learning models is typically carried out on parallel or distributed systems equipped with multiple CPU or GPU processors. 
In such environments, the use of delayed inexact oracles is unavoidable. 
Second, although the underlying optimization problem of this task is nonconvex, the network architecture contains only a few hidden layers, making its landscape relatively simple compared to deep learning models.
Third, several existing studies have shown that training neural networks using variants of Nesterov's momentum can improve performance, see, e.g., \cite{sutskever2013importance}.

\vspace{0.5ex}
\noindent\textbf{$\mathrm{(a)}$~\textit{Model and data.}} 
We train a shallow neural network to perform a well-known image classification on handwritten digits. 
The model is a fully connected network with two hidden layers containing $300$ and $100$ nodes, respectively, followed by a softmax cross-entropy loss layer. The network is implemented in PyTorch and trained on the well-known \texttt{MNIST} dataset with $N = 60{,}000$ samples \cite{MNIST}.
To simulate a privacy-sensitive environment and non-i.i.d. characteristics often encountered in practice, the data is partitioned so that each of the $10$ digit classes is stored exclusively on a single worker node (yielding $10$ workers in total). 
In this setting, the data on each worker remains confidential from both the central server and the other workers.

The first order optimality condition of this training problem can be written as $Gx^\star = 0$, a special case of \eqref{eq:CE}, where $Gx := \frac{1}{10}\sum_{i=1}^{10} G_i x$, $G_ix$ is the gradient of the loss function over the data of the $i$-th worker, and $x$ gathers all trainable parameters of the neural network.

\vspace{1ex}
\noindent\textbf{$\mathrm{(b)}$~\textit{Algorithms and parameters.}} 
We apply \eqref{eq:iFKM4CE} with inconsistent asynchronous oracle $\widetilde{G}^k$ defined in \eqref{eq:inconsistent_update}, denoted by \texttt{AFP-RA-$m$}, where $m \in \sets{1,2,5}$ is the number of active workers randomly sampled (without replacement) at each iteration.
We also implement the standard Gradient Descent method (\texttt{GD}) and the Incremental Aggregated Gradient method (\texttt{IAG}) for benchmarking.
The parameters of each algorithm are chosen as follows after tuning:
\begin{compactitem}[$\bullet$]
	\item For \texttt{GD} and \texttt{IAG}, the learning rates are set to $1.5\times10^{-2}$ and $1.25\times10^{-2}$, respectively.
	\item For \texttt{AFP-RA}: we choose $\gamma = 1.0$, $s = 1.05$, and $\eta = \frac{3}{1+\tau}$, where $\tau := 2\lceil \frac{N}{m} \rceil$.
\end{compactitem}

\vspace{1ex}
\noindent\textbf{$\mathrm{(c)}$~\textit{Numerical results.}} 
Figure \ref{fig:NN_mnist} compares the performance of different algorithmic variants on the training loss value and the test accuracy, each averaged over 10 runs.
\begin{figure}[hpt!]
\vspace{-3ex}
	\centering
	\includegraphics[width=\textwidth]{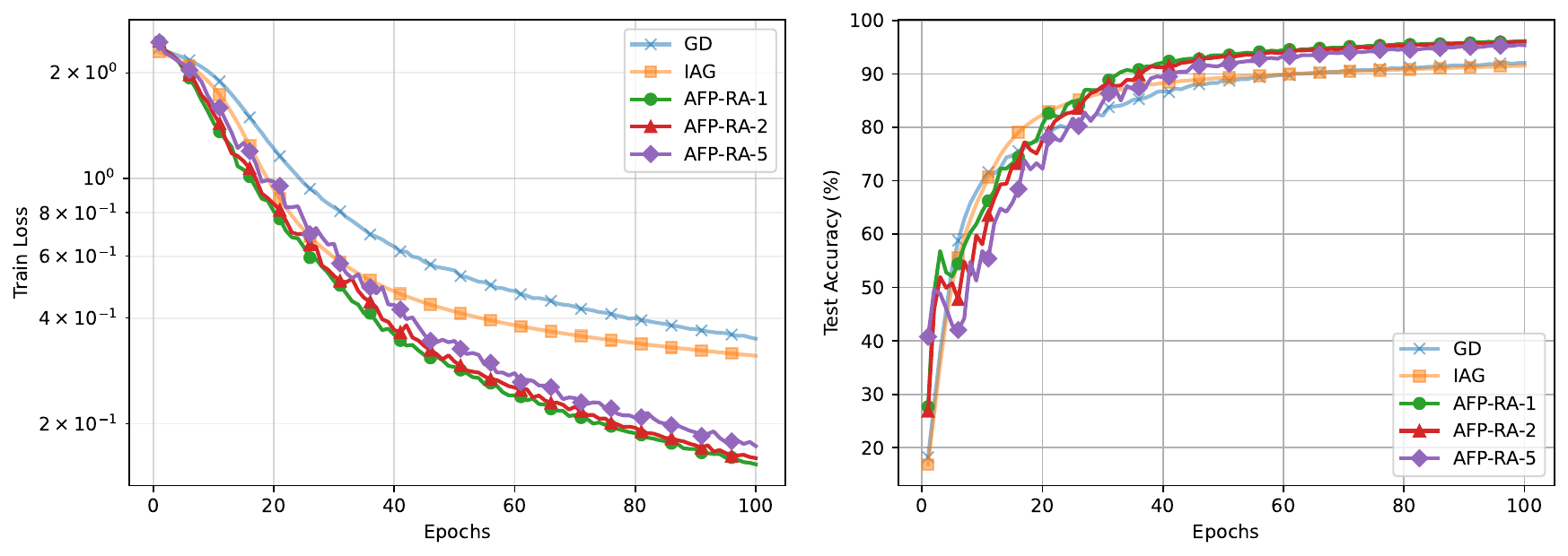}
	\vspace{-4ex}
	\caption{The behavior of the train loss and the test accuracy produced by 5 algorithms (three variants \texttt{AFP-RA} of \eqref{eq:iFKM4CE} and two competitors) for solving a neural network training problem using the \texttt{MNIST} dataset.}
	\label{fig:NN_mnist}
\vspace{-3ex}	
\end{figure}

We observe that the three variants of \eqref{eq:iFKM4CE} outperform the two competitors, while the ones with fewer active workers per iteration perform slightly better due to more updates per epoch.
This demonstrates the effectiveness of the accelerated methods over the non-accelerated counterparts, even when the updates are executed using delayed information.
Moreover, while \texttt{GD} typically performs well when the data is partitioned into i.i.d. subsets, it is not the case in such an extreme non-i.i.d. setting, which causes single-class-based updates, introducing significant bias to the network.
In contrast, the aggregated methods retain historical information from all classes -- despite the staleness -- thereby mitigating bias and yielding superior overall performance.

\appendix
\beforesec
\section{Appendix: Preliminary and Mathematical Tools}\label{apdx:useful_lemmas}
\aftersec
We first recall some technical results that will be used in our convergence analysis.
Then, we extend Lemma~3.5 from  \cite{feyzmahdavian2023asynchronous} to cover stochastic settings and also provide its proof.

\begin{lemma}[\cite{Bauschke2011}]\label{lem:tech_convergence1}
	Let $\sets{ a_k }$, $\sets{ b_k }$, and $\sets{ d_k }$ be sequences of real numbers. 
	Assume that $\sets{ a_k }$ is bounded from below, $\sets{b_k }$ and $\sets{ d_k }$ are nonnegative sequences such that $\sum_{k=0}^{\infty} d_k < \infty$, and
	\begin{equation*}
		a_{k+1} \leq a_k - b_k + d_k, \quad \forall k \geq 0.
	\end{equation*}
	Then, the following statements hold:
	\begin{compactitem}
		\item[$\mathrm{(i)}$] $\sets{b_k}$ is summable, namely $\sum_{k=0}^{\infty} b_k < \infty$.
		\item[$\mathrm{(ii)}$] $\sets{ a_k }$ is convergent.
	\end{compactitem}
\end{lemma}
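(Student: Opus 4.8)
The plan is to reduce the perturbed descent inequality to an exact monotone recursion by absorbing the summable perturbation $\{d_k\}$ into a shifted auxiliary sequence. Since $\sum_{k=0}^{\infty} d_k < \infty$, the tail sums $D_k := \sum_{j=k}^{\infty} d_j$ are finite, nonnegative, satisfy $D_k - D_{k+1} = d_k$, and vanish as $k \to \infty$. The central idea is to define $c_k := a_k + D_k$. Substituting the hypothesis $a_{k+1} \leq a_k - b_k + d_k$ then gives
\[
c_{k+1} = a_{k+1} + D_{k+1} \leq (a_k - b_k + d_k) + (D_k - d_k) = c_k - b_k \leq c_k,
\]
where the last inequality uses $b_k \geq 0$; hence $\{c_k\}$ is nonincreasing.

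Next I would establish convergence of $\{c_k\}$. Since $\{a_k\}$ is bounded below by some $m \in \R$ and $D_k \geq 0$, we have $c_k \geq m$ for all $k$. A nonincreasing sequence that is bounded from below converges, so let $c^\star := \lim_{k\to\infty} c_k$. This yields claim $\mathrm{(ii)}$ at once: because $D_k \to 0$, we obtain $a_k = c_k - D_k \to c^\star$, so $\{a_k\}$ is convergent.

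For claim $\mathrm{(i)}$ I would telescope. From $c_{k+1} \leq c_k - b_k$ we get $b_k \leq c_k - c_{k+1}$, so for every $N \geq 0$,
\[
\sum_{k=0}^{N} b_k \leq \sum_{k=0}^{N} (c_k - c_{k+1}) = c_0 - c_{N+1} \leq c_0 - c^\star,
\]
where the final bound uses that $\{c_k\}$ is nonincreasing with limit $c^\star$, so $c_{N+1} \geq c^\star$. The partial sums of the nonnegative sequence $\{b_k\}$ are therefore uniformly bounded, which gives $\sum_{k=0}^{\infty} b_k < \infty$.

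The main --- and essentially only --- obstacle is identifying the correct auxiliary sequence. The difficulty is that $\{a_k\}$ itself need not be monotone because of the perturbation term $d_k$, so one cannot apply a monotone-convergence argument to it directly. Shifting by the tail sum $D_k$ is exactly what converts the almost-supermartingale inequality into a genuine descent $c_{k+1} \leq c_k - b_k$, after which both assertions follow from elementary monotone convergence and a telescoping sum. No further technical subtlety arises in this deterministic setting.
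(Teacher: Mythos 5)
Your proof is correct. Note that the paper does not prove this lemma at all --- it is imported from the literature (cited to Bauschke--Combettes) and used as a black box --- so there is no in-paper argument to compare against. Your tail-sum shift $c_k := a_k + \sum_{j\geq k} d_j$ is the standard device for this result: it converts the perturbed descent inequality into a genuine monotone recursion $c_{k+1} \leq c_k - b_k$, after which monotone convergence gives (ii) (since $D_k \to 0$) and telescoping gives (i); every step checks out.
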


\begin{lemma}[\cite{TranDinh2025a}]\label{lem:tech_convergence2}
	Let $\sets{ a_k }_{k\geq 0}$ be a sequence of nonnegative numbers and $\alpha > 0$ be a constant. 
	Assume that $\lim_{k \to \infty} k^{\alpha + 1} a_k$ exists and $\sum_{k=0}^\infty k^\alpha a_k < \infty$. Then, $\lim_{k\to \infty} k^{\alpha + 1} a_k = 0$.
\end{lemma}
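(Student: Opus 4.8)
The plan is to argue by contradiction, exploiting the fact that the hypothesized limit, which I denote $L := \lim_{k\to\infty} k^{\alpha+1} a_k$, is necessarily nonnegative: since $a_k \geq 0$ and $k^{\alpha+1} > 0$, every term of the sequence $\{k^{\alpha+1}a_k\}$ is nonnegative, so its limit satisfies $L \geq 0$. It therefore suffices to rule out $L > 0$. The heuristic behind the proof is that the existence of the limit upgrades the qualitative statement ``$a_k \to 0$ fast'' into a precise decay rate $a_k \approx L\, k^{-(\alpha+1)}$, and weighting this rate by $k^\alpha$ leaves a residual tail of order $1/k$, whose divergence clashes with the summability hypothesis.

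Concretely, first I would suppose $L > 0$. By the definition of the limit, there exists an index $K \geq 1$ such that $k^{\alpha+1} a_k > \tfrac{L}{2}$ for all $k \geq K$. Rearranging this inequality gives $a_k > \tfrac{L}{2\,k^{\alpha+1}}$, and multiplying through by $k^\alpha$ yields $k^\alpha a_k > \tfrac{L}{2k}$ for every $k \geq K$. The decisive step is then to sum this lower bound against the harmonic series and compare with the assumed finiteness of $\sum_{k=0}^\infty k^\alpha a_k$:
\begin{equation*}
\sum_{k=K}^\infty k^\alpha a_k \;\geq\; \frac{L}{2}\sum_{k=K}^\infty \frac{1}{k} \;=\; +\infty ,
\end{equation*}
which contradicts $\sum_{k=0}^\infty k^\alpha a_k < \infty$. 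Hence $L$ cannot be strictly positive, and combined with $L \geq 0$ this forces $L = 0$, which is exactly the claim $\lim_{k\to\infty} k^{\alpha+1} a_k = 0$.

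Since the argument is elementary, I do not anticipate a genuine obstacle; the only points deserving a word of care are minor bookkeeping matters. The condition $\alpha > 0$ guarantees $0^\alpha = 0$, so the $k=0$ term contributes nothing and the sums may be taken to start from $k=1$ without affecting divergence. I would also stress \emph{why} the existence of the limit is indispensable: without it, the weighted sequence $k^{\alpha+1}a_k$ could oscillate (being large on a sparse set of indices while remaining summable after the $k^\alpha$ weighting), so that no pointwise bound of the form $a_k > \tfrac{L}{2}k^{-(\alpha+1)}$ would hold for all large $k$, and the harmonic comparison would break down. It is precisely the assumed convergence of $k^{\alpha+1}a_k$ that licenses the uniform lower bound on the tail and thereby the contradiction.
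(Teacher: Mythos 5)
Your proof is correct. The paper does not actually prove this lemma---it is imported from \cite{TranDinh2025a} without proof---but your argument is exactly the standard one: since $L := \lim_{k\to\infty} k^{\alpha+1}a_k \geq 0$ by nonnegativity, assuming $L>0$ gives $k^\alpha a_k > \tfrac{L}{2k}$ for all large $k$, so the series $\sum_k k^\alpha a_k$ dominates a harmonic tail and diverges, contradicting the summability hypothesis; every step, including the reduction to ruling out $L>0$ and your remark that existence of the limit is what licenses the uniform tail bound, is sound.
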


\begin{lemma}[Supermartingale theorem, \cite{robbins1971convergence}]\label{lem:supermartingale}
	Let $\{X_k\}$, $\{ \alpha_k \}$, $\{V_k\}$, and $\{R_k\}$ be sequences of nonnegative integrable random variables on some arbitrary probability space and adapted to the filtration $\{\mathcal F_k\}$ with $\sum_{k=0}^\infty \alpha_k < +\infty$ and $\sum_{k=0}^\infty R_k < +\infty$ almost surely, and
	\begin{equation*}
		\Exp{X_{k+1} \mid \mathcal F_k} \leq (1 + \alpha_k) X_k - V_k + R_k, \quad \forall k \geq 0 \text{ almost surely.}
	\end{equation*}
	Then, $\{X_k\}$ almost surely converges and $\sum_{k=0}^\infty V_k < +\infty$ almost surely.
\end{lemma}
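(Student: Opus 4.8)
The plan is to prove this via the classical rescaling-and-localization argument for almost supermartingales (the Robbins--Siegmund scheme). Two features obstruct a direct appeal to the supermartingale convergence theorem: the multiplicative factor $(1+\alpha_k)$ prevents $\{X_k\}$ from being a supermartingale, and the driving terms $\{R_k\}$ (and $\{\alpha_k\}$) are only assumed summable \emph{almost surely}, not in expectation, so any supermartingale we build is bounded below merely by a finite but possibly non-integrable random variable. The first issue I would remove by rescaling; the second is the genuine difficulty and I would handle it by localization via stopping times.

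First I would set $\beta_k := \prod_{i=0}^{k-1}(1+\alpha_i)$ with $\beta_0 := 1$. Since $\alpha_i \geq 0$ and $\sum_i \alpha_i < \infty$ almost surely, these partial products increase to a finite limit $\beta_\infty \in [1,\infty)$ almost surely; moreover each $\beta_{k+1}$ is $\Fc_k$-measurable because $\alpha_i$ is $\Fc_i$-adapted. Dividing the hypothesis by $\beta_{k+1} = \beta_k(1+\alpha_k)$ and writing $Y_k := X_k/\beta_k$, $W_k := V_k/\beta_{k+1}$, and $S_k := R_k/\beta_{k+1}$, I obtain the clean almost-supermartingale inequality $\Exp{Y_{k+1} \mid \Fc_k} \leq Y_k - W_k + S_k$ with $Y_k, W_k, S_k \geq 0$. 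Because $\beta_{k+1} \geq 1$, I have $S_k \leq R_k$, hence $\sum_k S_k < \infty$ almost surely, and likewise $V_k \leq \beta_\infty W_k$.

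Next I would form $Q_k := Y_k + \sum_{i=0}^{k-1}(W_i - S_i)$, which the inequality turns into a genuine supermartingale, $\Exp{Q_{k+1}\mid \Fc_k} \leq Q_k$. The catch is that $Q_k \geq -\sum_{i=0}^{\infty} S_i$, a lower bound that is finite almost surely but need not be integrable, so the bounded-below supermartingale convergence theorem does not apply directly. To repair this I would localize: for each integer $m \geq 1$ define $\sigma_m := \inf\{k : \sum_{i=0}^{k} S_i > m\}$, which is a stopping time since $\sum_{i \leq k} S_i$ is $\Fc_k$-measurable. The stopped process $\{Q_{k\wedge\sigma_m}\}$ remains a supermartingale (stopping preserves the supermartingale property) and is now bounded below by the constant $-m$, hence converges almost surely. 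Since $\sum_i S_i < \infty$ almost surely, $\Prob{\sigma_m = \infty} \to 1$ as $m \to \infty$, and on $\{\sigma_m = \infty\}$ the stopped and unstopped processes coincide; taking the union over $m$ shows that $Q_k$ converges almost surely.

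Finally I would unwind the conclusions. Because $\sum_{i=0}^{k-1} S_i \to \sum_i S_i < \infty$ almost surely, convergence of $Q_k$ forces $Y_k + \sum_{i=0}^{k-1} W_i$ to converge almost surely; as $\sum_{i=0}^{k-1} W_i$ is nondecreasing and $Y_k \geq 0$, divergence $\sum_i W_i = \infty$ would make this sum diverge, so $\sum_i W_i < \infty$ almost surely, whence $Y_k$ converges as the difference of two convergent sequences. Translating back, $X_k = \beta_k Y_k$ converges almost surely and $\sum_k V_k \leq \beta_\infty \sum_k W_k < \infty$ almost surely, which is exactly the claim. I expect the localization step to be the main obstacle: it is precisely where the \emph{almost sure} (rather than in-expectation) summability of $\{\alpha_k\}$ and $\{R_k\}$ is absorbed, and care is needed to confirm that $\sigma_m$ is a bona fide stopping time and that the stopped process is still a supermartingale.
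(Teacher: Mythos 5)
The paper offers no proof of this lemma: it is stated as a known result, cited directly from Robbins and Siegmund \cite{robbins1971convergence}, so there is no internal argument to compare against. Your proposal is a correct and complete reconstruction of the classical Robbins--Siegmund proof --- rescaling by $\beta_k=\prod_{i=0}^{k-1}(1+\alpha_i)$ (legitimately passed through the conditional expectation because $\beta_{k+1}$ is $\mathcal{F}_k$-measurable), compensating to form the supermartingale $Q_k$, and localizing with the stopping times $\sigma_m$ so that the stopped process is bounded below by the constant $-m$ --- and every delicate step (that $\sigma_m$ is a genuine stopping time since the partial sums of $S_i$ are adapted, that stopping preserves the supermartingale property, and that $\Prob{\sigma_m=\infty}\to 1$ as $m\to\infty$) is handled correctly.
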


\begin{lemma}[Proposition 4.1 of \cite{davis2022variance}]\label{lem:Opial_NE}
	Suppose that $G$ in \eqref{eq:CE} is continuous.
	Let $\sets{x^k}$ be a sequence of random vectors such that for all $x^{\star} \in \zer{G}$, the sequence $\sets{\norms{x^k - x^{\star}}^2}$ almost surely converges to a $[0, \infty)$-valued random variable. 
	In addition, assume that $\sets{\norms{Gx^k}}$ also almost surely converges to zero.
	Then, $\sets{x^k}$ almost surely converges to a $\zer{G}$-valued random variable.
\end{lemma}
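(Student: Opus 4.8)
The plan is to prove a finite-dimensional, almost-sure version of Opial's lemma, and the main obstacle is measurability: the hypothesis only guarantees that for each \emph{fixed} $x^{\star} \in \zer{G}$ the sequence $\sets{\norms{x^k - x^{\star}}^2}$ converges on a probability-one event, but this event may depend on $x^{\star}$, and $\zer{G}$ may be uncountable, so a naive intersection over all $x^{\star}$ need not have full measure. I would resolve this by reducing to a countable set. Since $G$ is continuous, $\zer{G} = G^{-1}(\sets{0})$ is closed, hence separable; fix a countable dense subset $D = \sets{d_1, d_2, \dots} \subseteq \zer{G}$. For each $i$, let $\Omega_i$ be the event that $\norms{x^k - d_i}^2$ converges, and let $\Omega_G$ be the event that $\norms{Gx^k} \to 0$; by hypothesis each has probability one. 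Setting $\Omega^{\star} := \Omega_G \cap \bigcap_{i} \Omega_i$, a countable intersection, we get $\Prob{\Omega^{\star}} = 1$, and all remaining arguments are carried out pathwise for a fixed $\omega \in \Omega^{\star}$, where the sequence becomes deterministic.

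Fixing $\omega \in \Omega^{\star}$ and writing $x^k$ for $x^k(\omega)$, I would proceed in three preliminary steps. First, boundedness: since $\norms{x^k - d_1}$ converges it is bounded, so $\sets{x^k}$ is bounded and hence has at least one cluster point. Second, every cluster point lies in $\zer{G}$: if $x^{k_j} \to \bar{x}$, then continuity of $G$ gives $Gx^{k_j} \to G\bar{x}$, while $\norms{Gx^{k_j}} \to 0$ forces $G\bar{x} = 0$. Third, I would upgrade convergence of $\norms{x^k - d_i}$ from $D$ to all of $\zer{G}$: for any $z \in \zer{G}$, choosing $d_{i_n} \to z$ in $D$ and using the reverse triangle inequality $\big\vert \norms{x^k - z} - \norms{x^k - d_{i_n}} \big\vert \leq \norms{z - d_{i_n}}$ yields $\limsup_k \norms{x^k - z} - \liminf_k \norms{x^k - z} \leq 2\norms{z - d_{i_n}}$, so letting $n \to \infty$ shows that $\lim_k \norms{x^k - z}$ exists for every $z \in \zer{G}$.

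The decisive step is the Opial uniqueness argument. Suppose $\bar{x}$ and $\bar{y}$ are two cluster points; both lie in $\zer{G}$ by the second step, so $\lim_k \norms{x^k - \bar{x}}^2$ and $\lim_k \norms{x^k - \bar{y}}^2$ both exist by the third step. Expanding the difference gives $\norms{x^k - \bar{x}}^2 - \norms{x^k - \bar{y}}^2 = 2\iprods{x^k, \bar{y} - \bar{x}} + \norms{\bar{x}}^2 - \norms{\bar{y}}^2$, whose left-hand side converges; hence the scalar sequence $\iprods{x^k, \bar{y} - \bar{x}}$ converges. Evaluating this convergent sequence along a subsequence tending to $\bar{x}$ and along one tending to $\bar{y}$, and equating the two limits, forces $\iprods{\bar{x}, \bar{y} - \bar{x}} = \iprods{\bar{y}, \bar{y} - \bar{x}}$, i.e. $\norms{\bar{y} - \bar{x}}^2 = 0$, so $\bar{x} = \bar{y}$. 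A bounded sequence with a unique cluster point converges, so $x^k(\omega) \to x^{\star}(\omega) \in \zer{G}$ for every $\omega \in \Omega^{\star}$. Finally, since this pointwise limit holds on the probability-one event $\Omega^{\star}$ and each $x^k$ is measurable, $x^{\star}$ is a $\zer{G}$-valued random variable, completing the proof.
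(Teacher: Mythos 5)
Your proof is correct. There is nothing in the paper to compare it against line-by-line: the lemma is imported verbatim, with citation but without proof, as Proposition 4.1 of \cite{davis2022variance}, and your blind argument is essentially a reconstruction of the standard proof of that cited result. In particular, your countable-dense-subset reduction is exactly the right device for the one genuine difficulty here --- the exceptional null set in the hypothesis depends on $x^{\star}$, and $\zer{G}$ is typically uncountable --- and it is legitimate because $\zer{G} = G^{-1}(\sets{0})$ is a subset of $\R^p$, hence separable. The pathwise part (boundedness from convergence of a single distance, cluster points lying in $\zer{G}$ via continuity of $G$ and $\norms{Gx^k}\to 0$, the density upgrade of $\lim_k\norms{x^k - z}$ from $D$ to all of $\zer{G}$, and the inner-product expansion forcing uniqueness of cluster points) is the classical deterministic Opial argument, and each step checks out; note that the density upgrade is genuinely needed, since the random cluster points need not lie in $D$. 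Two minor points you leave implicit: (i) the construction of $D$ presumes $\zer{G} \neq \emptyset$, which is anyway forced by the conclusion and guaranteed in the paper's usage by Assumption~\ref{as:A1}(i); (ii) your final measurability sentence is slightly glossed --- without assuming a complete probability space, one should define $x^{\star}$ as $\lim_k x^k$ on the measurable event where $\sets{x^k}$ is Cauchy (which contains $\Omega^{\star}$) and as an arbitrary fixed point of $\zer{G}$, say $d_1$, elsewhere; this is routine but worth stating.
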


The following lemma is an extension of \cite[Lemma 3.5]{feyzmahdavian2023asynchronous} from a deterministic to a stochastic setting.

\begin{lemma}\label{le:Asyn_lemma_A2}
Let $\sets{V_k}$, $\sets{U_k}$, and $\sets{S_k}$ be three  sequences of nonnegative integrable random variables on some arbitrary probability space and adapted to the filtration $\{\mathcal F_k\}$ such that, almost surely
\begin{equation}\label{eq:recursive_relation2}
\Expn{ V_{k+1} \mid \Fc_k }  \leq V_k - U_k + \mu_k \sum_{l =  [k - \tau_k + 1]_{+}}^kS_l - \nu_kS_k + \omega_k, \quad \forall k \geq 0.
\end{equation}
where $\mu_k, \nu_k, \omega_k \geq 0$ and $0 \leq \tau_k \leq \tau$ for all $k\geq 0$.
Suppose further that $\sum_{l=0}^{\tau-1} \mu_{k+l} \leq \nu_k$.
\begin{compactitem}
\item[$\mathrm{(i)}$] Then, we have
\begin{equation}\label{eq:recursive_result2}
\sum_{k=0}^K\Expn{ U_k } \leq \Expn{ V_0 } + \sum_{k=0}^K\omega_k \quad \textrm{and} \quad \Expn{ V_{K+1} } \leq \Expn{ V_0 } + \sum_{k=0}^K\omega_k.
\end{equation}

\item[$\mathrm{(ii)}$] 
Let $D_0 := 0$ and, for $k \geq 1$, let
\begin{equation}\label{eq:Asyn_lemma_A2_p2}
	D_k := \sum_{l=0}^{k-1}\left[ \nu_l - \sum_{t=l}^{\min\sets{l+\tau-1, k-1}}\mu_t \right]S_l \qquad \text{and} \qquad \widetilde{V}_k := V_k + D_k.
\end{equation}
If $\sum_{k=0}^{\infty}\omega_k < +\infty$, then $\sets{\widetilde{V}_k}$ converges almost surely to a finite random variable $\widetilde{V}_{\infty}$ and $\sum_{k=0}^{\infty}U_k < +\infty$ almost surely.
Consequently, both $D_k$ and $V_k$ converge almost surely to finite random variables $D_{\infty}$ and $V_{\infty}$, respectively.
\end{compactitem}
\end{lemma}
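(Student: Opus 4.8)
The plan is to convert the delayed recursion \eqref{eq:recursive_relation2} into a clean (delay-free) supermartingale inequality by absorbing the two delay-dependent terms $\mu_k\sum_{l=[k-\tau_k+1]_+}^k S_l$ and $-\nu_k S_k$ into the auxiliary sequence $D_k$ defined in \eqref{eq:Asyn_lemma_A2_p2}, and then to feed the result into the supermartingale theorem (Lemma~\ref{lem:supermartingale}). First I would record the increment of $D_k$: telescoping the double sum in \eqref{eq:Asyn_lemma_A2_p2} (using $\min\sets{k+\tau-1,k}=k$ since $\tau\geq1$) gives the identity $D_{k+1}-D_k = \nu_k S_k - \mu_k\sum_{l=[k-\tau+1]_+}^k S_l$. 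The key observation for the stochastic extension is that $D_{k+1}$ depends only on $S_0,\dots,S_k$, which are all $\Fc_k$-measurable, so $D_{k+1}$ may be pulled outside $\Expsn{k}{\cdot}$. Adding $D_{k+1}$ to both sides of \eqref{eq:recursive_relation2} and substituting this identity, the $\nu_k S_k$ terms cancel and the two delayed sums merge into $-\mu_k\sum_{l=[k-\tau+1]_+}^{[k-\tau_k+1]_+-1}S_l$, which is $\leq0$ because $\tau_k\leq\tau$ forces $[k-\tau_k+1]_+\geq[k-\tau+1]_+$ while $S_l,\mu_k\geq0$. This produces the clean inequality $\Expsn{k}{\widetilde V_{k+1}}\leq\widetilde V_k-U_k+\omega_k$ almost surely, with $\widetilde V_k:=V_k+D_k$.

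Next I would check $\widetilde V_k\geq0$, which is needed to apply Lemma~\ref{lem:supermartingale}. Each coefficient in \eqref{eq:Asyn_lemma_A2_p2} satisfies $\nu_l-\sum_{t=l}^{\min\sets{l+\tau-1,k-1}}\mu_t \geq \nu_l-\sum_{j=0}^{\tau-1}\mu_{l+j}\geq0$ by the standing hypothesis $\sum_{l=0}^{\tau-1}\mu_{k+l}\leq\nu_k$ applied at $k:=l$; since $S_l\geq0$, this gives $D_k\geq0$, hence $\widetilde V_k\geq V_k\geq0$. For part $\mathrm{(i)}$ I would then take total expectation of the clean inequality, telescope from $0$ to $K$, and use $D_0=0$ (so $\widetilde V_0=V_0$) together with $\widetilde V_{K+1}\geq0$ and $\widetilde V_{K+1}\geq V_{K+1}$ to read off both bounds in \eqref{eq:recursive_result2}.

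For part $\mathrm{(ii)}$, I would invoke Lemma~\ref{lem:supermartingale} on $\Expsn{k}{\widetilde V_{k+1}}\leq\widetilde V_k-U_k+\omega_k$ with $\alpha_k\equiv0$ and $R_k=\omega_k$ (summable by hypothesis), obtaining $\widetilde V_k\to\widetilde V_\infty$ finite and $\sum_kU_k<+\infty$ almost surely. To separate $V_k$ from $D_k$, I would split $D_k=A_k+B_k$, where $A_k:=\sum_{l=0}^{[k-\tau]_+}\big(\nu_l-\sum_{t=l}^{l+\tau-1}\mu_t\big)S_l$ is a sum of nonnegative terms, hence nondecreasing, and is bounded above by $\widetilde V_k$; monotone convergence then yields $A_k\to A_\infty$ almost surely, and in particular $\sum_l\big(\nu_l-\sum_{t=l}^{l+\tau-1}\mu_t\big)S_l<+\infty$. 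The remainder $B_k$ collects at most $\tau-1$ boundary terms whose coefficients split as $\big(\nu_l-\sum_{t=l}^{l+\tau-1}\mu_t\big)+\sum_{t=k}^{l+\tau-1}\mu_t$; the first piece is a finite block of tail terms of the convergent series above and vanishes, and once the second piece is shown to vanish as well, we conclude $D_k\to A_\infty$ and $V_k=\widetilde V_k-D_k$ converges almost surely.

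The main obstacle is twofold. The core difficulty is the exact delay bookkeeping that produces the clean supermartingale: the cancellation of $\nu_k S_k$ and the merging of the two delayed sums into a single sign-definite term is precisely where the hypotheses $\sum_{l=0}^{\tau-1}\mu_{k+l}\leq\nu_k$ and $\tau_k\leq\tau$ are consumed, and the measurability of $D_{k+1}$ with respect to $\Fc_k$ is what makes the deterministic argument lift to conditional expectations. The secondary delicate point, in part $\mathrm{(ii)}$, is controlling the boundary block $B_k$ so that $D_k$ (and therefore $V_k$) genuinely converges rather than merely staying bounded; this is subtle because the boundary coefficients involve the future weights $\mu_t$ with $t\geq k$, and it is handled using the summability of $\sum_kU_k$ together with the tail estimate for the stable series $A_k$.
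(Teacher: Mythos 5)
Most of your proposal is sound and, up to the last step, coincides with the paper's own argument. Your increment identity $D_{k+1}-D_k=\nu_kS_k-\mu_k\sum_{l=[k-\tau+1]_+}^kS_l$, the observation that $D_{k+1}$ is $\Fc_k$-measurable, the sign argument that merges the two delayed sums (exactly where $\tau_k\le\tau$ is consumed), and the nonnegativity of the coefficients in \eqref{eq:Asyn_lemma_A2_p2} (where $\sum_{l=0}^{\tau-1}\mu_{k+l}\le\nu_k$ is consumed) reproduce the paper's construction of $\widetilde{V}_k$ and its clean supermartingale inequality, and your Robbins--Siegmund step is identical. Your route to part (i) --- telescoping the clean inequality and using $\widetilde{V}_{K+1}\ge\max\sets{0,V_{K+1}}$ --- differs cosmetically from the paper, which instead takes total expectations of \eqref{eq:recursive_relation2} directly and swaps the order of summation; both are correct.

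The genuine gap is the step you yourself flag as delicate: showing that the boundary block $B_k$ vanishes. In this lemma $\sets{U_k}$ and $\sets{S_k}$ are unrelated sequences, so summability of $\sum_kU_k$ gives no control whatsoever on $B_k$, and no argument can close this step at the stated level of generality, because the final claim of part (ii) is actually false as stated. Concretely: take everything deterministic, $\tau=2$, $\tau_k\equiv2$, $\mu_k\equiv1$, $\nu_k\equiv2$ (so $\mu_k+\mu_{k+1}\le\nu_k$ holds with equality), $\omega_k\equiv0$, $U_k\equiv0$, $S_k=1$ for $k$ even and $S_k=0$ for $k$ odd, and let $V_k$ satisfy \eqref{eq:recursive_relation2} with equality from $V_0=2$, i.e.\ $V_{k+1}=V_k+S_{k-1}-S_k$ with $S_{-1}:=0$. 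Then $V_k$ alternates $2,1,2,1,\dots$, one computes $D_k=S_{k-1}$, and $\widetilde{V}_k\equiv2$: all hypotheses hold, $\widetilde{V}_k$ converges and $\sum_kU_k<+\infty$, yet neither $D_k$ nor $V_k$ converges. In your decomposition, $A_k\equiv0$ converges, but $B_k=\mu_kS_{k-1}=S_{k-1}$ oscillates --- precisely the ``second piece'' you hoped to kill. You should also know that the paper's own proof stumbles at the same point: it asserts that $\sets{D_k}$ is ``nonnegative and monotonically increasing,'' and the same example gives $D_1=1>0=D_2$, so monotonicity fails; the flaw is shared, but your proposed repair via $\sum_kU_k$ cannot work either.

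The statement (and both proofs) can be repaired by adding a coupling hypothesis that holds in the paper's application: either $\nu_kS_k\le U_k$ for all $k$, or directly $\sum_k\nu_kS_k<+\infty$ almost surely. For the quantities in \eqref{eq:iFKM4CE_le_Uk} one indeed has $\nu_kS_k\le\frac{t_{k-1}(t_{k-1}-s)}{2}\big[\beta\norms{Gy^k-Gy^{k-1}}^2+\bar{\beta}\Dc(y^k,y^{k-1})\big]\le U_k$. Under such a hypothesis, $\sum_kU_k<+\infty$ yields $\sum_k\nu_kS_k<+\infty$ a.s., and since $\sum_{t=k}^{l+\tau-1}\mu_t\le\sum_{t=l}^{l+\tau-1}\mu_t\le\nu_l$ for $l\ge k-\tau+1$, your boundary block satisfies $0\le B_k\le\sum_{l=[k-\tau+1]_+}^{k-1}\nu_lS_l\to0$; your decomposition $D_k=A_k+B_k$ then completes the proof of the final claim, making your approach, suitably patched, the more robust of the two.
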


\begin{proof}
(i)~Let $S_k = 0$ for $k\leq -1$.
Taking the full expectation on both sides of \eqref{eq:recursive_relation2}, we get
\begin{equation*} 
\Expn{ V_{k+1}  } + \Expn{ U_k }  \leq \Expn{ V_k }  + \sum_{l = [k - \tau_k + 1]_{+}}^k  \mu_k \Expn{ S_l } -  \nu_k \Expn{ S_k }  +  \omega_k. 
\end{equation*}
Summing up this inequality from $k=0$ to $K$, and noting that $b_0 = 1$,  $\Expn{S_k} = 0$ for $k \leq -1$, and $\tau_k \leq \tau$, we get
\begin{equation}\label{eq:lmA2_proof1} 
\hspace{-4ex}
\arraycolsep=0.2em
\begin{array}{lcl}
 \Expn{ V_{K+1} }  + \sum_{k=0}^K \Expn{ U_k }  &\leq & \Expn{ V_0 } + \sum_{k=0}^K \sum_{l = (k - \tau_k+1)_{+}}^k   \mu_k \Expn{ S_l }  - \sum_{k=0}^K \nu_k \Expn{ S_k } + \sum_{k=0}^K \omega_k \vspace{1ex}\\
& = & \Expn{ V_0 } + \sum_{k=0}^K \sum_{l = k - \tau_k +1}^k \mu_k \Expn{ S_l } - \sum_{k=0}^K  \nu_k \Expn{ S_k }  + \sum_{k=0}^K \omega_k \vspace{1ex}\\
& \leq & \Expn{ V_0 }  + \sum_{k=0}^K \sum_{l = k - \tau + 1}^k  \mu_k \Expn{ S_l } - \sum_{k=0}^K  \nu_k \Expn{ S_k }  + \sum_{k=0}^K \omega_k  \vspace{1ex}\\
& = & \Expn{ V_0 }  + \sum_{l=0}^{\tau - 1} \sum_{k=-l}^{K-l}  \mu_{k+l} \Expn{ S_k }  - \sum_{k=0}^K  \nu_k \Expn{ S_k }  + \sum_{k=0}^K \omega_k  \vspace{1ex}\\
& \leq & \Expn{ V_0 }  + \sum_{l=0}^{\tau-1} \sum_{k=0}^{K}  \mu_{k+l} \Expn{ S_k }  - \sum_{k=0}^K  \nu_k \Expn{ S_k } + \sum_{k=0}^K \omega_k  \vspace{1ex}\\
& = & \Expn{ V_0 }  - \sum_{k=0}^K \left( \nu_k - \sum_{l=0}^{\tau-1} \mu_{k+l}  \right) \Expn{ S_k } + \sum_{k=0}^K \omega_k,
\end{array}
\hspace{-8ex}
\end{equation}
where the last inequality follows from the fact that $S_k \geq 0$ for all $k\geq 0$ and $S_k = 0$ for $k < 0$.
We eventually get from \eqref{eq:lmA2_proof1} that
\begin{equation*} 
\arraycolsep=0.2em
\begin{array}{lcl}
\Expn{ V_{K+1} } + \sum_{k=0}^K \Expn{ U_k }  & \leq &  \Expn{ V_0 }  - \sum_{k=0}^K \left( \nu_k - \sum_{l=0}^{\tau-1} \mu_{k+l} \right) \Expn{ S_k } + \sum_{k=0}^K\omega_k.
\end{array}
\end{equation*}
Since $\sum_{l=0}^{\tau-1}\mu_{k+l} \leq \nu_k$, this estimate leads to $\Expn{ V_{K+1} } + \sum_{k=0}^K \Expn{ U_k } \leq \Expn{ V_0 } + \sum_{k=0}^K\omega_k$.
This inequality implies both estimates in \eqref{eq:recursive_result2}.

(ii)~Suppose we start with $D_0 = 0$ and aim to construct a sequence of nonnegative integrable random variables $\sets{D_k}$ such that
\begin{equation}\label{eq:lmA1_proof2}
	\arraycolsep=0.2em
	\begin{array}{lcl}
		D_{k+1} = D_k + \nu_k S_k - \mu_k \sum_{l =  [k - \tau + 1]_{+}}^k S_l.
	\end{array}
\end{equation}
By induction, this leads to 
\begin{equation*} 
\arraycolsep=0.2em
\begin{array}{lcl}
	D_k &:=& \sum_{r=0}^{k-1} \left[ \nu_r S_r - \mu_r \sum_{l=[r - \tau + 1]_+}^{r} S_l \right] \vspace{1ex}\\
	&=& \sum_{r=0}^{k-1} \nu_r S_r  - \sum_{r=0}^{k-1} \mu_r \sum_{l=[r - \tau + 1]_+}^{r} S_l  \vspace{1ex}\\
	&=& \sum_{l=0}^{k-1} \nu_l S_l  - \sum_{l=0}^{k-1} \sum_{r=l}^{\min\sets{l+\tau - 1, k-1}} \mu_r S_l  \vspace{1ex}\\
	&=& \sum_{l=0}^{k-1} \left[\nu_l - \sum_{r=l}^{\min\sets{l+\tau - 1, k-1}} \mu_r\right] S_l
\end{array}
\end{equation*}
as stated in \eqref{eq:Asyn_lemma_A2_p2}.
Using the condition $\sum_{l=0}^{\tau-1} \mu_{k+l} \leq \nu_k$, we can show that
\begin{equation*} 
	\arraycolsep=0.2em
	\begin{array}{lcl}
		\sum_{r=l}^{\min\sets{l+\tau-1, k-1}} \mu_r \leq \sum_{r=l}^{l+\tau-1} \mu_r = \sum_{t=0}^{\tau-1} \mu_{l+t} \leq \nu_l.
	\end{array}
\end{equation*}
Combining this fact and $S_l \geq 0$, we can show that $\sets{D_k}$ is nonnegative and monotonically increasing.

Now, taking the conditional expectation $\Expn{\cdot \mid \Fc_k}$ of \eqref{eq:lmA1_proof2} and then adding the result to \eqref{eq:recursive_relation2} yields
\begin{equation}\label{eq:lmA1_proof3}
\arraycolsep=0.2em
\begin{array}{lcl}
\mathbb{E}[\widetilde{V}_{k+1}\mid\mathcal{F}_k] &\leq & \widetilde{V}_k - U_k + \mu_k \sum_{l=[k-\tau_k+1]_+}^{k}S_l - \nu_k S_k + \omega_k  + \nu_k S_k - \mu_k \sum_{l =  [k - \tau_k+1]_{+}}^k S_l \vspace{1ex} \\
&\leq& \widetilde{V}_k - U_k + \mu_k \sum_{l=[k-\tau+1]_+}^{k}S_l + \omega_k - \mu_k \sum_{l =  [k - \tau_k+1]_{+}}^k S_l \vspace{1ex} \\
&=& \widetilde{V}_k - U_k + \omega_k.
\end{array}
\end{equation}
From \eqref{eq:lmA1_proof3} and $\sum_{k=0}^{\infty} \omega_k<\infty$, the classical Robbins-Siegmund theorem (i.e., Lemma~\ref{lem:supermartingale}) applies to $\widetilde{V}_k$, giving $ \widetilde{V}_k \to \widetilde{V}_\infty$ as $k\to\infty$ and $\sum_{k=0}^{\infty}U_k<\infty$ almost surely.

Finally, since $\lim_{k \to \infty}\widetilde{V}_k$ exists almost surely, there exists $M > 0$ such that $\widetilde{V}_k \leq M$ almost surely.
Moreover, since $V_k \geq 0$, we have
\begin{equation*} 
	\arraycolsep=0.2em
	\begin{array}{lcl}
		D_k = \widetilde{V}_k - V_k \leq \widetilde{V}_k \leq M \quad \text{almost surely}.
	\end{array}
\end{equation*}
Combining this fact and the monotonicity of $\sets{D_k}$, we conclude that $D_{\infty} := \lim_{k \to \infty} D_k$ exists almost surely.
Therefore, we conclude that
\begin{equation*} 
	\arraycolsep=0.2em
	\begin{array}{lcl}
		\lim_{k \to \infty} V_k = \lim_{k \to \infty} (\widetilde{V}_k - D_k) = \widetilde{V}_{\infty} - D_{\infty} := V_{\infty} \quad \text{almost surely}.
	\end{array}
\end{equation*}
This completes the proof.
\Eproof
\end{proof}

\beforesec
\section{Appendix: The Proofs of Technical Lemmas in Section~\ref{sec:iFKM_framework}}\label{sec:apdx:iFKM4CE_framework}
\aftersec
This appendix provides the missing proofs in Section~\ref{sec:iFKM_framework} of the main text.

\beforesubsec
\subsection{\textbf{The proof of Lemma~\ref{le:iFKM4CE_key_estimate2}: Lower bounding $\Lc_k - \Exps{k}{\Lc_{k+1}}$}}\label{subsec:apdx:le:iFKM4CE_key_estimate2}
\aftersubsec
To prove Lemma~\ref{le:iFKM4CE_key_estimate2}, we need the following technical lemma.

\begin{lemma}\label{le:iFKM4CE_key_estimate1}
Suppose that Condition~\eqref{eq:G_cocoercivity} holds for \eqref{eq:CE}.
Let $\sets{(x^k, y^k, z^k)}$ be generated by \eqref{eq:iFKM4CE} such that $t_k > s$ and $t_k = t_{k-1} + 1$.
Then, for $\Lc_k$ defined by \eqref{eq:iFKM4CE_Lk_func}, we have
\begin{equation}\label{eq:iFKM4CE_key_est1}
\arraycolsep=0.2em
\begin{array}{lcl}
\Lc_k - \Lc_{k+1} & \geq & \frac{[2\beta - (1+c_1)\eta_k] t_k(t_k - s)}{2} \norms{Gy^{k+1} - Gy^k}^2  + \bar{\beta}t_k(t_k - s)\Dc(y^{k+1}, y^k)\vspace{1ex}\\
&&  + {~} \frac{a_k - \hat{a}_k}{2}\norms{Gy^k}^2  + s(s-1) \iprods{ Gy^k, y^k - x^{\star} } - \frac{\eta_kb_k}{2} \norms{e^k}^2,
\end{array}
\end{equation}
where $a_k$, $\hat{a}_k$, and $b_k$ are respectively given by
\begin{equation}\label{eq:iFKM4CE_coeffs}
\arraycolsep=0.2em
\left\{\begin{array}{lcl}
a_k & := &  \eta_{k-1}t_{k-1}(t_{k - 1} - s), \vspace{1ex}\\
\hat{a}_k &:= & \eta_k[(t_k - s)(t_k- 2s + 2\gamma_k) + c_2(s-\gamma_k) + (s-1)(1+\hat{c}_k)\gamma_k], \vspace{1ex}\\
b_k & := & \left(\frac{1}{c_1} + \frac{s-\gamma_k}{c_2}\right) t_k (t_k - s) + (s-1)\gamma_k \left(\frac{1+\hat{c}_k}{\hat{c}_k} +  \frac{\eta_{k-1}\gamma_{k-1}}{\eta_k\gamma_k - \eta_{k-1}\gamma_{k-1}}\right),
\end{array}\right.
\end{equation}
for any positive constants $c_1$, $c_2$, and $\hat{c}_k$.
\end{lemma}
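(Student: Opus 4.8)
<br>

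The plan is to prove the descent inequality \eqref{eq:iFKM4CE_key_est1} by directly computing the one-step difference $\Lc_k - \Lc_{k+1}$ term by term, using the three-block structure of $\Lc_k$ in \eqref{eq:iFKM4CE_Lk_func} together with the update rules \eqref{eq:iFKM4CE}. First I would expand each of the three pieces of $\Lc_{k+1}$: the quadratic residual term $\frac{a_{k+1}}{2}\norms{Gy^{k+1}}^2$, the cross term $st_k\iprods{Gy^{k+1}, y^{k+1} - z^{k+1}}$, and the distance term $\frac{s^2(s-1)}{2\eta_k\gamma_k}\norms{z^{k+1} - x^\star}^2$. The main algebraic engine will be to eliminate $y^{k+1} - z^{k+1}$ and $z^{k+1} - x^\star$ in favour of quantities at step $k$ plus the update directions. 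From the third line of \eqref{eq:iFKM4CE} one has $y^{k+1} - z^{k+1} = \tfrac{t_k - s}{t_k}(x^{k+1} - z^{k+1})$, and from the second line $z^{k+1} - z^k = \tfrac{\gamma_k}{s}(x^{k+1} - y^k)$, while the first line gives $x^{k+1} - y^k = -\eta_k\widetilde{G}^k = -\eta_k(Gy^k + e^k)$. Substituting these lets me rewrite everything in terms of $Gy^k$, $Gy^{k+1}$, $z^k$, $y^k$, $x^\star$, and the error $e^k$.

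The key structural identity I expect to need is a telescoping relation linking $\iprods{Gy^{k+1}, y^{k+1} - z^{k+1}}$ at step $k+1$ back to $\iprods{Gy^k, y^k - z^k}$ at step $k$; this is exactly where the assumption $t_k = t_{k-1} + 1$ and the precise form of $\eta_k$ enter, so that the coefficient bookkeeping closes. After collecting terms I would invoke the co-coercivity condition \eqref{eq:G_cocoercivity} of Assumption~\ref{as:A1} in the form $\iprods{Gy^{k+1} - Gy^k, y^{k+1} - y^k} \geq \beta\norms{Gy^{k+1} - Gy^k}^2 + \bar{\beta}\Dc(y^{k+1}, y^k)$ to produce the $\beta$ and $\bar{\beta}$ terms on the right-hand side of \eqref{eq:iFKM4CE_key_est1}. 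The inner product $\iprods{Gy^k, y^k - x^\star}$ should survive as a clean leftover term, which is deliberate: it is not sign-definite by itself but will be handled downstream (in Lemma~\ref{le:iFKM4CE_key_estimate2} it gets absorbed after using the telescoped sum, and later via $s \geq 1 + 3\gamma$ in Lemma~\ref{le:iFKM4CE_lower_bound_of_Lk}).

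The hard part will be controlling the error $e^k = \widetilde{G}^k - Gy^k$, which appears both linearly (through cross terms like $\iprods{Gy^{k+1} - Gy^k, e^k}$ and $\iprods{Gy^k, e^k}$-type contributions coming from $z^{k+1}$ and $y^{k+1}$) and quadratically. The strategy is to \emph{not} leave any linear error term standing: each inner product involving $e^k$ must be split by Young's inequality $\iprods{u, e^k} \geq -\tfrac{\epsilon}{2}\norms{u}^2 - \tfrac{1}{2\epsilon}\norms{e^k}^2$ with carefully chosen weights. This is precisely the role of the free constants $c_1$, $c_2$, and $\hat{c}_k$ in \eqref{eq:iFKM4CE_coeffs}: $c_1$ calibrates the Young splitting against $\norms{Gy^{k+1} - Gy^k}^2$ (explaining the $(1+c_1)\eta_k$ coefficient in the leading term), $c_2$ splits the $(s-\gamma_k)$-scaled error contribution, and $\hat{c}_k$ together with the mismatch factor $\tfrac{\eta_{k-1}\gamma_{k-1}}{\eta_k\gamma_k - \eta_{k-1}\gamma_{k-1}}$ handles the error terms arising from the discrepancy between consecutive $z$-updates. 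All the absorbed quadratic error pieces are collected into the single coefficient $b_k$, while the absorbed $\norms{Gy^k}^2$ pieces become $\hat{a}_k$, leaving $\tfrac{a_k - \hat{a}_k}{2}\norms{Gy^k}^2$. The delicate point is ensuring the coefficient of $\norms{Gy^{k+1} - Gy^k}^2$ retains the exact form $\tfrac{[2\beta - (1+c_1)\eta_k]t_k(t_k-s)}{2}$ after all substitutions, which requires the $z$-update coefficient $\tfrac{\gamma_k}{s}$ and the stepsize to interact correctly; I would verify this by tracking the quadratic-in-update terms separately before merging.
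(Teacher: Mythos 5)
Your proposal takes essentially the same route as the paper's own proof: it expands the Lyapunov difference using the update relations of \eqref{eq:iFKM4CE} (with $x^{k+1}-y^k = -\eta_k(Gy^k+e^k)$ and $z^{k+1}-z^k = -\tfrac{\gamma_k\eta_k}{s}(Gy^k+e^k)$), applies the generalized co-coercivity \eqref{eq:G_cocoercivity} to the pair $(y^{k+1},y^k)$, telescopes the cross term via $t_k = t_{k-1}+1$ so that $s(s-1)\iprods{Gy^k, y^k - x^{\star}}$ survives, and eliminates every linear error term by Young's inequality with $c_1$, $c_2$, $\hat{c}_k$, plus a fourth weight tuned to the mismatch $\eta_k\gamma_k - \eta_{k-1}\gamma_{k-1}$ in the distance-term coefficient. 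The roles you assign to each free constant coincide with those in the paper's estimates of the cross-error term and the $\norms{z^k-x^{\star}}^2$ term, so the plan is sound and matches the published argument.
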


\begin{proof}
First, recall $e^k := \widetilde{G}^{k} - Gy^k$ from \eqref{eq:oracle_error}.
Then, from the last and the first lines of \eqref{eq:iFKM4CE}, we have
\begin{equation*}
\arraycolsep=0.2em
\begin{array}{lcl}
	t_k y^{k+1} = s z^{k+1} + (t_k - s)x^{k+1} = sz^{k+1} + (t_k - s)y^k - \eta_k (t_k - s) (Gy^k + e^k).
\end{array}
\end{equation*}
Using $sz^{k+1} = sz^k - \gamma_k\eta_k(Gy^k + e^k)$ from the second line of \eqref{eq:iFKM4CE}, the last equation can be expressed into two different forms as
\begin{equation}\label{eq:key_lm1_proo1} 
\arraycolsep=0.2em
\begin{array}{lcl}
(t_k - s)t_k (y^{k+1} - y^k) &= & s(t_k - s)(z^{k+1} - y^k) - \eta_k (t_k - s)^2 (Gy^k + e^k) \vspace{1ex}\\
& = & s(t_k-s)(z^k - y^k) - \eta_k (t_k - s) (t_k-s + \gamma_k) (Gy^k + e^k), \vspace{1ex}\\
(t_k-s)t_k(y^{k+1} - y^k) &= & st_k(z^{k+1} - y^{k+1}) - \eta_k t_k(t_k - s) (Gy^k + e^k).
\end{array}
\end{equation}
Since $G$ satisfies Condition~\eqref{eq:G_cocoercivity} of Assumption~\ref{as:A1}, we can show that
\begin{equation*} 
\arraycolsep=0.2em
\begin{array}{lcl}
t_k(t_k-s) \iprods{Gy^{k+1}, y^{k+1} - y^k } - t_k(t_k-s)\iprods{Gy^k, y^{k+1} - y^k} &\geq &   \beta t_k(t_k-s) \norms{Gy^{k+1} - Gy^k}^2 \vspace{1ex}\\
&& + {~} \bar{\beta} t_k(t_k-s) \Dc(y^{k+1}, y^k).
\end{array}
\end{equation*}
Substituting the two expressions from \eqref{eq:key_lm1_proo1} into the last inequality, we can derive that
\begin{equation*} 
\arraycolsep=0.2em
\begin{array}{lcl}
\Tc_{[1]} &:= & st_k\iprods{Gy^{k+1}, z^{k+1} - y^{k+1}} - s(t_k-s)\iprods{Gy^k, z^k - y^k} \vspace{1ex}\\
& \geq & \beta t_k(t_k - s) \norms{Gy^{k+1} - Gy^k}^2 + \eta_k t_k(t_k-s)\iprods{Gy^{k+1}, Gy^k} + \bar{\beta} t_k(t_k-s)  \Dc(y^{k+1}, y^k)\vspace{1ex}\\
&& - {~} \eta_k (t_k - s)(t_k - s + \gamma_k ) \norms{Gy^k}^2 + \eta_k(t_k - s)\iprods{t_k Gy^{k+1} - (t_k - s + \gamma_k) Gy^k, e^k} \vspace{1ex}\\
& = & \frac{(2\beta - \eta_k)t_k(t_k - s)}{2}  \norms{Gy^{k+1} - Gy^k}^2 + \frac{\eta_kt_k(t_k - s)}{2}\norms{Gy^{k+1}}^2 +   \bar{\beta} t_k(t_k - s)  \Dc(y^{k+1}, y^k) \vspace{1ex}\\
&& - {~} \frac{\eta_k(t_k - s)(t_k- 2s + 2\gamma_k)}{2}\norms{Gy^k}^2 + \eta_k(t_k - s)\iprods{t_kGy^{k+1} - (t_k - s + \gamma_k)Gy^k, e^k}.
\end{array}
\end{equation*}
Here, we have used $2\iprod{Gy^{k+1}, Gy^k} = \norm{Gy^{k+1}}^2 + \norms{Gy^k}^2 - \norms{Gy^{k+1} - Gy^k}^2$ in the last equality.
By Young's inequality, for any $c_1 > 0$ and $c_2 > 0$, we can show that
\begin{equation*} 
\arraycolsep=0.2em
\begin{array}{lcl}
\Tc_{[2]} &:= & \eta_k(t_k - s)\iprods{t_kGy^{k+1} - (t_k - s + \gamma_k) Gy^k, e^k} \vspace{1ex}\\

&=& \eta_k t_k (t_k - s) \iprods{Gy^{k+1} - Gy^k, e^k} + (s - \gamma_k)\eta_k (t_k - s)\iprods{Gy^k, e^k}  \vspace{1ex}\\

&\geq& - \frac{c_1 \eta_k t_k (t_k - s)}{2} \norms{Gy^{k+1} - Gy^k}^2 - \frac{\eta_k t_k (t_k - s)}{2c_1} \norms{e^k}^2   \vspace{1ex}\\
&& - {~} \frac{c_2 (s - \gamma_k) \eta_k (t_k - s)}{2t_k} \norms{Gy^k}^2 - \frac{(s - \gamma_k)\eta_k t_k (t_k - s)}{2c_2}\norms{e^k}^2 \vspace{1ex}\\

&\geq& - \frac{c_1 \eta_k t_k (t_k - s)}{2} \norms{Gy^{k+1} - Gy^k}^2 - \frac{c_2(s - \gamma_k) \eta_k}{2} \norms{Gy^k}^2 - \left(\frac{1}{c_1} + \frac{s-\gamma_k}{c_2}\right)\frac{\eta_k t_k (t_k - s)}{2}\norms{e^k}^2,
\end{array}
\end{equation*}
where we have used $\frac{t_k - s}{t_k} \leq 1$ in the last inequality.
Substituting $\Tc_{[2]}$ into $\Tc_{[1]}$, and rearranging the result with the fact that $t_k = t_{k-1} + 1$, we get
\begin{equation*} 
\arraycolsep=0.2em
\begin{array}{lcl}
\Tc_{[3]} &:= & s t_{k-1}\iprods{Gy^k, y^k - z^k} - s t_k\iprods{Gy^{k+1}, y^{k+1} - z^{k+1}} \vspace{1ex}\\
& \geq & \frac{[2\beta - (1+c_1)\eta_k] t_k(t_k - s)}{2}  \norms{Gy^{k+1} - Gy^k}^2 + \frac{\eta_kt_k(t_k - s)}{2}\norms{Gy^{k+1}}^2 +   \bar{\beta} t_k(t_k - s)  \Dc(y^{k+1}, y^k) \vspace{1ex}\\
&& + {~} s(s-1)\iprods{Gy^k, y^k - z^k} - \frac{\eta_k[(t_k - s)(t_k- 2s + 2\gamma_k) + c_2(s-\gamma_k)]}{2}\norms{Gy^k}^2 - \frac{\eta_k t_k (t_k - s)}{2}\left(\frac{1}{c_1} + \frac{s-\gamma_k}{c_2}\right)\norms{e^k}^2.
\end{array}
\end{equation*}
Next, using again Young's inequality and $z^{k+1} - z^k = -\frac{\gamma_k\eta_k}{s}(Gy^k + e^k)$ from the second line of \eqref{eq:iFKM4CE}, for any $c_k > 0$ and $\hat{c}_k > 0$, we have 
\begin{equation*} 
\arraycolsep=0.2em
\begin{array}{lcl}
\Tc_{[4]} &:= & \frac{s^2(s-1)}{2\eta_k\gamma_k}\norms{z^k - x^{\star}}^2 - \frac{s^2(s-1)}{2\eta_k\gamma_k}\norms{z^{k+1} - x^{\star}}^2 \vspace{1ex}\\
& = & - \frac{s^2(s-1)}{\eta_k\gamma_k}\iprods{z^{k+1} - z^k, z^k - x^{\star}} - \frac{s^2(s-1)}{2\eta_k\gamma_k}\norms{z^{k+1} - z^k}^2 \vspace{1ex}\\
& = & s(s-1)\iprods{Gy^k + e^k, z^k - x^{\star}} -  \frac{(s-1)\eta_k\gamma_k}{2}\norms{Gy^k + e^k }^2 \vspace{1ex}\\
& \geq & s(s-1) \iprods{Gy^k, z^k - x^{\star}} - \frac{s^2(s-1)c_k}{2}\norms{z^k - x^{\star}}^2 - \frac{s-1}{2c_k}\norms{e^k}^2 \vspace{1ex}\\
&& - {~}  \frac{(s-1)(1+\hat{c}_k)\eta_k\gamma_k}{2}\norms{Gy^k}^2 -  \frac{(s-1)(1+\hat{c}_k)\eta_k\gamma_k}{2\hat{c}_k}\norms{e^k}^2.
\end{array}
\end{equation*}
Rearranging this inequality and choosing $c_k := \frac{\eta_k\gamma_k - \eta_{k-1}\gamma_{k-1}}{\gamma_{k-1}\gamma_k\eta_{k-1}\eta_k}$, we get
\begin{equation*} 
\arraycolsep=0.2em
\begin{array}{lcl}
\hat{\Tc}_{[4]} &:= & \frac{s^2(s-1)}{2\eta_{k-1}\gamma_{k-1}}\norms{z^k - x^{\star}}^2 - \frac{s^2(s-1)}{2\eta_k\gamma_k}\norms{z^{k+1} - x^{\star}}^2 \vspace{1ex}\\
& \geq & s(s-1) \iprods{Gy^k, z^k - x^{\star}}  -  \frac{(s-1)(1+\hat{c}_k)\eta_k\gamma_k}{2}\norms{Gy^k}^2 -  \frac{(s-1)\eta_k\gamma_k}{2} \big[ \frac{1+\hat{c}_k}{\hat{c}_k} +  \frac{\eta_{k-1}\gamma_{k-1}}{\eta_k\gamma_k - \eta_{k-1}\gamma_{k-1}} \big] \norms{e^k}^2.
\end{array}
\end{equation*}
Combining $\Tc_{[3]}$ and $\hat{\Tc}_{[4]}$, we obtain 
\begin{equation*} 
\arraycolsep=0.2em
\begin{array}{lcl}
\Tc_{[5]} &:= & s t_{k-1}\iprods{Gy^k, y^k - z^k} + \frac{s^2(s-1)}{2\eta_{k-1}\gamma_{k-1}}\norms{z^k - x^{\star}}^2 \vspace{1ex}\\
&& - {~} s t_k\iprods{Gy^{k+1}, y^{k+1} - z^{k+1}} - \frac{s^2(s-1)}{2\eta_k\gamma_k}\norms{z^{k+1} - x^{\star}}^2 \vspace{1ex}\\

& \geq & \frac{[2\beta - (1+c_1)\eta_k] t_k(t_k - s)}{2}  \norms{Gy^{k+1} - Gy^k}^2 + \frac{\eta_kt_k(t_k - s)}{2}\norms{Gy^{k+1}}^2 +   \bar{\beta} t_k(t_k - s)  \Dc(y^{k+1}, y^k) \vspace{1ex}\\
&& + {~} s(s-1)\iprods{Gy^k, y^k - x^\star} - \frac{\eta_k}{2}[(t_k - s)(t_k- 2s + 2\gamma_k) + c_2(s-\gamma_k) + (s-1)(1+\hat{c}_k)\gamma_k]\norms{Gy^k}^2 \vspace{1ex}\\
&& - {~} \frac{\eta_k}{2}\left[\left(\frac{1}{c_1} + \frac{s-\gamma_k}{c_2}\right) t_k (t_k - s) + (s-1)\gamma_k \left(\frac{1+\hat{c}_k}{\hat{c}_k} +  \frac{\eta_{k-1}\gamma_{k-1}}{\eta_k\gamma_k - \eta_{k-1}\gamma_{k-1}}\right)\right]\norms{e^k}^2.
\end{array}
\end{equation*}
Rearranging this inequality and using $a_k$, $\hat{a}_k$, and $b_k$ as in \eqref{eq:iFKM4CE_coeffs}, and the definition of $\Lc_k$ from \eqref{eq:iFKM4CE_Lk_func},  we finally get \eqref{eq:iFKM4CE_key_est1} from the last expression $\Tc_{[5]}$.
\Eproof
\end{proof}

Now, applying Lemma~\ref{le:iFKM4CE_key_estimate1} above, we are ready to prove Lemma~\ref{le:iFKM4CE_key_estimate2}.

\begin{proof}$($\textbf{The proof of Lemma~\ref{le:iFKM4CE_key_estimate2}}$)$
	We first choose $\hat{c}_k := 1$, $\gamma_k := \gamma \in (0,1]$, and $\eta_k = \frac{\eta t_k}{2(t_k - s)}$ for some $\eta > 0$. 
	Then, we have $\frac{\eta}{2} \leq \eta_k \leq \frac{\eta t_0}{2(t_0 - s)} \leq \eta$ due to $t_0 > 2s$.
	Next, we can show that
	\begin{equation*}
		\arraycolsep=0.2em
		\begin{array}{lcl}
			a_k - \hat{a}_k 
			&=& \eta (s - \gamma - 1) t_k + \frac{\eta}{2} - \frac{\eta [c_2(s-\gamma) + 2(s-1)\gamma]t_k}{2(t_k - s)} \vspace{1ex}\\
			&\geq& \eta (s - \gamma - 1) t_k + \frac{\eta}{2} - \frac{\eta [c_2(s-\gamma) + 2(s-1)\gamma]t_0}{2(t_0 - s)} \vspace{1ex}\\
			&\geq& \frac{\eta}{2} (s - \gamma - 1) t_k > 0,
		\end{array}
	\end{equation*}
	provided that $s > 1 + \gamma$.
	Moreover, from \eqref{eq:iFKM4CE_coeffs}, we can also prove that 
	\begin{equation*} 
		\arraycolsep=0.2em
		\begin{array}{lcl}
			b_k & := & \left(\frac{1}{c_1} + \frac{s-\gamma_k}{c_2}\right) t_k (t_k - s) + (s-1)\gamma_k \left(\frac{1+\hat{c}_k}{\hat{c}_k} +  \frac{\eta_{k-1}\gamma_{k-1}}{\eta_k\gamma_k - \eta_{k-1}\gamma_{k-1}}\right) \vspace{1ex}\\
			
			&=& \left(\frac{1}{c_1} + \frac{s-\gamma}{c_2}\right) t_k (t_k - s) + (s-1)\gamma \left[2 - \frac{t_{k-1}(t_k - s)}{s}\right] \vspace{1ex}\\
			 
			&\leq& \left(\frac{1}{c_1} + \frac{s-\gamma}{c_2}\right) t_k(t_k-s),
		\end{array}
	\end{equation*}
	where the last inequality holds due to $t_{k-1} > 2s$.

	Finally, substituting the last two estimates of $a_k - \hat{a}_k$ and $b_k$ into \eqref{eq:iFKM4CE_key_est1} of Lemma~\ref{le:iFKM4CE_key_estimate1}, and noticing that $0 < \eta_k \leq \eta$, we obtain
	\begin{equation*}
		\arraycolsep=0.2em
		\begin{array}{lcl}
			\Lc_k - \Lc_{k+1} & \geq & \frac{[2\beta - (1+c_1)\eta] t_k(t_k - s)}{2} \norms{Gy^{k+1} - Gy^k}^2  + \bar{\beta}t_k(t_k - s)\Dc(y^{k+1}, y^k)\vspace{1ex}\\
			&&  + {~} \frac{\eta (s - \gamma - 1) t_k}{4}\norms{Gy^k}^2  + s(s-1) \iprods{ Gy^k, y^k - x^{\star} } - \frac{\eta}{2} \left(\frac{1}{c_1} + \frac{s-\gamma}{c_2}\right) t_k(t_k-s) \norms{e^k}^2,
		\end{array}
	\end{equation*}
	which is exactly \eqref{eq:iFKM4CE_key_est2}.
	\Eproof
\end{proof}

\beforesubsec
\subsection{\textbf{The proof of Lemma~\ref{le:iFKM4CE_lower_bound_of_Lk}: Lower bounding $\Lc_k$}}\label{subsec:apdx:le:iFKM4CE_lower_bound_of_Lk}
\aftersubsec
	Since $Gx^{\star} = 0$ for any $x^{\star} \in \zer{G}$, we obtain from Condition~\eqref{eq:G_cocoercivity} of Assumption~\ref{as:A1}(b) that $\iprods{Gy^k, y^k - x^{\star}} \geq \beta \norms{Gy^k}^2$.
	Using this relation, we can show from \eqref{eq:iFKM4CE_Lk_func} that 
	\begin{equation}\label{eq:iFKM4CE_iasyn_key_est2_proof1}
	\arraycolsep=0.2em
	\begin{array}{lcl}
		\Lc_k &=& \frac{a_k}{2} \norms{Gy^k}^2 + st_{k-1}\iprods{Gy^k, y^k - z^k} + \frac{s^2(s-1)}{2\eta_{k-1}\gamma_{k-1}}\norms{z^k - x^{\star}}^2 \vspace{1ex}\\
		&=& \Big( \frac{a_k}{2} - \frac{\eta t_{k-1}^2}{8} \Big) \norms{Gy^k}^2 + st_{k-1} \iprods{Gy^k, y^k - x^{\star}} + \left[ \frac{s^2(s-1)}{2\eta_{k-1}\gamma_{k-1}} - \frac{2s^2}{\eta} \right]\norms{z^k - x^{\star}}^2 \vspace{1ex}\\
		&& + {~} \frac{1}{8\eta} \norms{4s(z^k - x^{\star}) - \eta t_{k-1}Gy^k}^2 \vspace{1ex}\\
		&\geq& \frac{4a_k - \eta t_{k-1}^2  + 8s \beta t_{k-1}}{8} \norms{Gy^k}^2+ \left[ \frac{s^2(s-1)}{2\eta_{k-1}\gamma_{k-1}} - \frac{2s^2}{\eta} \right]\norms{z^k - x^{\star}}^2.
	\end{array}
	\end{equation}
	Using $a_k$ in \eqref{eq:iFKM4CE_key_est1}, $\eta_k$ in \eqref{eq:iFKM4CE_param1}, and $s > 1 + 3\gamma$, we can also show that
	\begin{equation*}
		\arraycolsep=0.2em
		\begin{array}{lcl}
			A_k &:=& 4a_k - \eta t_{k-1}^2  + 8\beta s t_{k-1} 
			= \eta t_{k-1}^2 + 8\beta s t_{k-1} 
			\geq \eta t_{k-1}^2, \vspace{2ex}\\
			
			B_k &:=& \frac{s^2(s-1)}{2\eta_{k-1}\gamma_{k-1}} - \frac{2s^2}{\eta} = \frac{s^2}{\eta} \left[ \frac{(s-1)(t_{k-1} - s)}{\gamma t_{k-1}} - 2 \right]
			\geq \frac{s^2}{\eta} \left[ \frac{3(t_{k-1} - s)}{t_{k-1}} - 2 \right]
			= \frac{s^2(t_{k-1} - 3s)}{t_{k-1}}.
		\end{array}
	\end{equation*}
	Substituting these expressions into \eqref{eq:iFKM4CE_iasyn_key_est2_proof1}, we obtain \eqref{eq:iFKM4CE_iasyn_key_est2}.
\Eproof

\beforesubsec
\subsection{\textbf{Proof of Lemma~\ref{le:iFKM4CE_descent_property}: Lower bounding $V_k - \Expsn{k}{V_{k+1}}$}}\label{subsec:apdx:le:iFKM4CE_descent_property}
\aftersubsec
Since $x^{\star} \in \zer{G}$, by Condition~\eqref{eq:G_cocoercivity} of Assumption~\ref{as:A1}(b), we have $\iprods{Gy^k, y^k - x^{\star}} \geq \beta\norms{Gy^k}^2$.
Substituting this inequality into \eqref{eq:iFKM4CE_key_est2}, then choosing $c_1 = c_2 := \tau > 0$ and taking the conditional expectation $\Expsn{k}{\cdot}$ on both sides of  the result, we obtain
\begin{equation}\label{eq:iFKM4CE_lm34_proof1}
	\hspace{-1ex}
	\arraycolsep=0.2em
	\begin{array}{lcl}
		\Lc_k - \Expsn{k}{\Lc_{k+1}} & \geq & \frac{[2\beta - (1+\tau)\eta] t_k(t_k - s)}{2} \Expsn{k}{  \norms{Gy^{k+1} - Gy^k}^2 }  + \bar{\beta}t_k(t_k - s)\Expsn{k}{ \Dc(y^{k+1}, y^k) }\vspace{1ex}\\
		&&  + {~} \frac{\eta (s - \gamma - 1) t_k + 4\beta s(s-1)}{4}\norms{Gy^k}^2 - \frac{\Lambda\eta}{2\tau} t_k(t_k-s) \norms{e^k}^2,
	\end{array}
	\hspace{-1ex}
\end{equation}
where $\Lambda := 1 + s - \gamma$.

Since we choose $t_k = k+3s+\tau$, it is obvious to show that $t_k(t_k-s) \leq \big( \alpha - \frac{s+1}{2s^2}\big) t_{k-1}(t_{k-1}-s)$, where $\alpha := \frac{(s+1)^2}{s^2}$.
Using this inequality, we can show from the second line of \eqref{eq:iFKM_error_bound} that
\begin{equation*}
\arraycolsep=0.2em
\begin{array}{lcl}
t_k(t_k-s)\Expsn{k}{\Delta_k}  & \leq & \frac{\alpha(1-\kappa)}{1 - (1-\kappa)\alpha}\big[ t_{k-1}(t_{k-1}-s)\Delta_{k-1}  -  t_k(t_k-s)\Expsn{k}{\Delta_k} \big] \vspace{1ex}\\
&& {~} + \frac{ t_k(t_k-s)}{1 - (1-\kappa)\alpha} W_k + \frac{\delta_k}{1 - (1-\kappa)\alpha} - \frac{(1-\kappa)(s+1)t_{k-1}(t_{k-1}-s)}{2s^2\theta }\Delta_{k-1},
\end{array}
\end{equation*}
where $W_k := \sum_{l=[k-\tau_k + 1]_{+}}^k\big[\Theta\norms{Gy^l - Gy^{l-1}}^2 + \hat{\Theta}\Dc(y^l, y^{l-1})\big]$, provided that 
$\frac{2s + 1}{(s+1)^2}  < \kappa \leq 1$.
Substituting $\Expsn{k}{\norms{e^k}^2} \leq \Expsn{k}{\Delta_k}$ from the first line of \ref{de:error_bound_cond} into \eqref{eq:iFKM4CE_lm34_proof1} and then using the last inequality, we can show that
\begin{equation}\label{eq:iFKM4CE_lm34_proof2}
	\hspace{-10ex}
	\arraycolsep=0.2em
	\begin{array}{lcl}
		\Lc_k - \Expsn{k}{\Lc_{k+1}} & \geq &  \frac{\eta (s - \gamma - 1) t_k + 4\beta s(s-1)}{4} \norms{Gy^k}^2  + \frac{\Lambda\eta(1 - \kappa)(s+1)t_{k-1}(t_{k-1}-s)}{4\tau s^2\theta }\Delta_{k-1} \vspace{1ex}\\  
		&&  + {~} \frac{[2\beta - (1+\tau)\eta] t_k(t_k - s)}{2} \Expsn{k}{  \norms{Gy^{k+1} - Gy^k}^2 } + \bar{\beta}t_k(t_k - s) \Expsn{k}{ \Dc(y^{k+1}, y^k) }  \vspace{1ex}\\
		&&  - {~} \frac{\Lambda\eta\alpha(1-\kappa)}{2\tau[1 - (1-\kappa)\alpha ]} \Big[ t_{k-1}(t_{k-1}-s)\Delta_{k-1}  -  t_k(t_k-s)\Expsn{k}{\Delta_k} \Big]  - \frac{\Lambda\eta\delta_k}{2\tau\theta } \vspace{1ex}\\
		&& - {~} \frac{ \Lambda \eta t_k(t_k-s)}{2\tau\theta  } \sum_{l=[k-\tau_k + 1]_{+}}^k\big[\Theta\norms{Gy^l - Gy^{l-1}}^2 + \hat{\Theta}\Dc(y^l, y^{l-1})\big] \vspace{1ex}\\
		&&  - {~} \frac{[2\beta - (1+\tau)\eta] t_{k-1}(t_{k-1} - s)}{2} \norms{Gy^k - Gy^{k-1}}^2 - \bar{\beta}t_{k-1}(t_{k-1} - s) \Dc(y^{k}, y^{k-1})  \vspace{1ex}\\
		&& + {~} \frac{\beta t_{k-1}(t_{k-1} - s)}{2} \norms{Gy^k - Gy^{k-1}}^2 + \frac{\bar{\beta}t_{k-1}(t_{k-1} - s)}{2} \Dc(y^{k}, y^{k-1})  \vspace{1ex}\\
		&& + {~} \frac{[\beta - (1+\tau)\eta] t_{k-1}(t_{k-1} - s)}{2} \norms{Gy^k - Gy^{k-1}}^2 + \frac{\bar{\beta}t_{k-1}(t_{k-1} - s)}{2} \Dc(y^{k}, y^{k-1}).
	\end{array}
	\hspace{-8ex}
\end{equation}
Now, if we define 
\begin{equation}\label{eq:iFKM4CE_lm34_proof3}
\left\{\arraycolsep=0.2em
\begin{array}{lcl}
	V_k &:=& \Lc_k + \frac{[2\beta - (1+\tau)\eta] t_{k-1}(t_{k-1} - s)}{2} \norms{Gy^k - Gy^{k-1}}^2 + \bar{\beta}t_{k-1}(t_{k-1} - s) \Dc(y^{k}, y^{k-1}) \vspace{1ex}\\
	&& + {~} \frac{\Lambda\eta\alpha(1-\kappa)}{2\tau[1 - (1-\kappa)\alpha ]} t_{k-1}(t_{k-1}-s)\Delta_{k-1}, \vspace{1.5ex}\\
	
	U_k &:=& \frac{\eta (s - \gamma - 1) t_k + 4\beta s(s-1)}{4} \norms{Gy^k}^2  + \frac{\Lambda\eta(1 - \kappa)(s+1)t_{k-1}(t_{k-1}-s)}{4\tau s^2\theta }\Delta_{k-1} \vspace{1ex}\\
	&& + {~} \frac{\beta t_{k-1}(t_{k-1}-s)}{2} \norms{Gy^k - Gy^{k-1}}^2 + \frac{\bar{\beta}t_{k-1} (t_{k-1}-s)}{2}\Dc(y^k, y^{k-1}), \vspace{1.5ex}\\
	
	S_k &:=& \Theta\norms{Gy^k - Gy^{k-1}}^2 + \hat{\Theta}\Dc(y^k, y^{k-1}), \vspace{1.5ex}\\
	
	\omega_k &:=& \frac{\Lambda\eta\delta_k}{2\tau\theta }, \quad 
	\mu_k := \frac{\Lambda\eta t_k(t_k-s)}{2\tau\theta  }, \quad 
	\nu_k := \frac{1}{2} t_{k-1} (t_{k-1} - s)\min\set{\frac{\beta  - (1+\tau)\eta}{\Theta}, \frac{\bar{\beta}}{\hat{\Theta}}},
\end{array}\right.\hspace{-4ex}
\end{equation}
then we can derive from \eqref{eq:iFKM4CE_lm34_proof2} that
\begin{equation*}
\arraycolsep=0.2em
\begin{array}{lcl}
\Expsn{k}{ V_{k+1} } &\leq & V_k - U_k + \mu_k\sum_{l=[k-\tau_k+1]_{+} }^kS_l - \nu_k  S_k  + \omega_k,
\end{array}
\end{equation*}
which is exactly \eqref{eq:iFKM4CE_le_key_est}.
Note that the quantities in \eqref{eq:iFKM4CE_lm34_proof3} are exactly \eqref{eq:iFKM4CE_le_Vk} and \eqref{eq:iFKM4CE_le_Uk}.
\Eproof

\beforesubsec
\subsection{\textbf{Supporting lemmas for convergence analysis}}\label{subsec:apdx:supporting_lemmas}
\aftersubsec
This appendix establishes a supporting lemma for our convergence analysis in the main text.

\begin{lemma}\label{le:appendix_convergence_series}
	Given a bounded sequence $\sets{\tau_k}$ with $0 \leq \tau_k \leq \tau$ for some fixed $\tau \geq 1$, $t_k = t_{k-1} + 1$, and a nonnegative random sequence $\sets{Z_k}$, if we have $\sum_{k=0}^\infty t_k (t_k - s) Z_k < +\infty$ almost surely, then the following bound also holds almost surely
	\begin{equation}
		\arraycolsep=0.2em
		\begin{array}{lcl}
			\sum_{k=0}^\infty t_k(t_k - s) \sum_{l = [k - \tau_k + 1]_+}^k Z_l < +\infty.
		\end{array}
	\end{equation}
\end{lemma}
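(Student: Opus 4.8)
The plan is to deduce the result from the hypothesis $\sum_{k=0}^\infty t_k(t_k-s)Z_k < +\infty$ through two elementary steps---enlarging the summation window to its maximal width and swapping the order of summation---followed by a single quantitative estimate on consecutive values of $t_k$.

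First I would enlarge the inner window. Because every $Z_l \ge 0$ and $0 \le \tau_k \le \tau$, we have $[[k-\tau_k+1]_+, k] \subseteq [[k-\tau+1]_+, k]$, so
\[
\sum_{l=[k-\tau_k+1]_+}^k Z_l \le \sum_{l=[k-\tau+1]_+}^k Z_l,
\]
and it therefore suffices to bound $\sum_{k=0}^\infty t_k(t_k-s)\sum_{l=[k-\tau+1]_+}^k Z_l$. Next, since all summands are nonnegative, Tonelli's theorem for series permits interchanging the two sums irrespective of finiteness. For a fixed $l \ge 0$, the indices $k$ whose window $[[k-\tau+1]_+, k]$ contains $l$ are precisely $k \in \{l, l+1, \dots, l+\tau-1\}$, which gives
\[
\sum_{k=0}^\infty t_k(t_k-s)\sum_{l=[k-\tau+1]_+}^k Z_l = \sum_{l=0}^\infty Z_l \sum_{k=l}^{l+\tau-1} t_k(t_k-s).
\]

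It then remains to control the inner finite sum by a constant multiple of $t_l(t_l-s)$. Using $t_k = t_{k-1}+1$, i.e. $t_k = t_l + (k-l)$, each of the $\tau$ terms is at most $t_{l+\tau-1}(t_{l+\tau-1}-s)$ with $t_{l+\tau-1}=t_l+\tau-1$, whence $\sum_{k=l}^{l+\tau-1}t_k(t_k-s) \le \tau\, t_{l+\tau-1}(t_{l+\tau-1}-s)$. The one step needing genuine care---and the main obstacle---is producing a finite constant $C_0=C_0(\tau,s)$, independent of $l$, with $t_{l+\tau-1}(t_{l+\tau-1}-s) \le C_0\, t_l(t_l-s)$ for all $l \ge 0$. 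Since $t_l > s$ throughout (indeed $t_k > 2s$ under the standing parameter choices) and $t_l \to \infty$, the ratio $(t_l+\tau-1)(t_l+\tau-1-s)/\big(t_l(t_l-s)\big)$ is well-defined, converges to $1$ as $l\to\infty$, and hence has a finite supremum $C_0$ over $l \ge 0$. Combining the two displays then yields
\[
\sum_{k=0}^\infty t_k(t_k-s)\sum_{l=[k-\tau_k+1]_+}^k Z_l \le \tau C_0 \sum_{l=0}^\infty t_l(t_l-s)Z_l < +\infty \quad \text{almost surely,}
\]
the final finiteness being exactly the hypothesis, which completes the argument.
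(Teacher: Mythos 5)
Your proposal is correct and follows essentially the same route as the paper's proof: enlarge the window using $\tau_k\le\tau$, swap the sums by Tonelli, bound the inner sum by $\tau\, t_{l+\tau-1}(t_{l+\tau-1}-s)$, and compare the shifted series with the original one via the ratio $t_{l+\tau-1}(t_{l+\tau-1}-s)/\bigl(t_l(t_l-s)\bigr)$. Your explicit uniform constant $C_0$ is just a slightly more careful packaging of the paper's appeal to the fact that this ratio tends to $1$.
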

\begin{proof}
	Since $\tau_k \leq \tau$, using Fubini-Tonelli's theorem and $\sum_{k=0}^\infty t_k(t_k - s) Z_k < +\infty$ almost surely, we have
	\begin{equation*}
		\arraycolsep=0.2em
		\begin{array}{lcl}
			A &:=& \sum_{k=0}^\infty t_k(t_k - s) \sum_{l = [k - \tau_k + 1]_+}^k Z_l \vspace{1ex}\\
			&\leq& \sum_{k=0}^\infty t_k(t_k - s) \sum_{l = [k - \tau + 1]_+}^k Z_l \vspace{1ex}\\
			&=& \sum_{l=0}^\infty Z_l \sum_{k=l}^{l+\tau-1} t_k(t_k - s) \vspace{1ex}\\
			&=& \sum_{l=0}^\infty Z_l \sum_{j=0}^{\tau-1} t_{j+l} (t_{j+l} - s) \vspace{1ex}\\ 
			&\leq& \tau \sum_{l=0}^\infty t_{l + \tau - 1} (t_{l + \tau - 1} - s) Z_l \vspace{1ex}\\ 
			&<& +\infty \quad \text{almost surely},
		\end{array}
	\end{equation*}	
	where the last inequality holds due to $\lim_{l\to\infty} \frac{t_{l + \tau - 1} (t_{l + \tau - 1} - s)}{t_l (t_l - s)} = 1$.
\Eproof
\end{proof}
\beforesec
\bibliographystyle{plain}

\end{document}